\documentclass [14pt]{amsart}
\usepackage{amsmath}
\usepackage{amssymb}
\usepackage{amscd}
\usepackage{epsfig}
\usepackage{amsxtra}
\usepackage{pifont}
\usepackage{mathabx}
\usepackage{MnSymbol}
\usepackage{accents}
\usepackage{scalerel,stackengine}
\stackMath
\newcommand\reallywidehat[1]{%
\savestack{\tmpbox}{\stretchto{%
  \scaleto{%
    \scalerel*[\widthof{\ensuremath{#1}}]{\kern-.6pt\bigwedge\kern-.6pt}%
    {\rule[-\textheight/2]{1ex}{\textheight}}
  }{\textheight}%
}{0.5ex}}%
\stackon[1pt]{#1}{\tmpbox}%
}

\newcommand\reallywidecheck[1]{%
\savestack{\tmpbox}{\stretchto{%
  \scaleto{%
    \scalerel*[\widthof{\ensuremath{#1}}]{\kern-.6pt\bigwedge\kern-.6pt}%
    {\rule[-\textheight/2]{1ex}{\textheight}}
  }{\textheight}%
}{0.5ex}}%
\stackon[1pt]{#1}{\scalebox{-1}{\tmpbox}}%
}

\numberwithin{equation}{section}



\newcommand{\supp}{\mbox{\rm supp}}
\newcommand{\dens}{\mbox{\rm dens}}

\newcommand{\RR}{{\mathbb R}}

\newcommand{\CC}{{\mathbb C}}

\newcommand{\vol}{\mbox{\rm vol}}

\newcommand{\cL}{{\mathcal L}}

\newcommand{\hG}{\widehat{G}}

\newcommand{\oplam}{\mbox{\Large $\curlywedge$}}
\newcommand{\sL}{\mathsf{L}}
\newcommand{\Gd}{G_{\mathsf{d}}}
\newcommand{\cF}{{\mathcal F}}
\newcommand{\cM}{{\mathcal M}}
\newcommand{\cS}{{\mathcal S}}

\newcommand{\Cu}{C_{\mathsf{u}}}
\newcommand{\Cc}{C_{\mathsf{c}}}
\newcommand{\Cz}{C^{}_{0}}
\newcommand{\Kt}{K_2}

\newcommand{\WAP}{\mathcal{WAP}}
\newcommand{\SAP}{\mathcal{SAP}}



 \newtheorem{theorem}{Theorem}[section]
 \newtheorem{lemma}[theorem]{Lemma}
 
 \newtheorem{corollary}[theorem]{Corollary}

 \newtheorem{definition}[theorem]{Definition}
 
  \newtheorem{remark}[theorem]{Remark}

\begin{document}
\title{On the Fourier Transformability of Strongly Almost Periodic Measures }

\author{Nicolae Strungaru}
\address{Department of Mathematical Sciences, MacEwan University \\
10700 -- 104 Avenue, Edmonton, AB, T5J 4S2\\
and \\
Department of Mathematics\\
Trent University \\
Peterborough, ON
and \\
Institute of Mathematics ``Simon Stoilow''\\
Bucharest, Romania}
\email{strungarun@macewan.ca}
\urladdr{http://academic.macewan.ca/strungarun/}

\begin{abstract} In this paper we characterize the Fourier transformability of strongly almost periodic measures in terms of an integrability condition for its Fourier Bohr series. We also provide a necessary and sufficient condition for a strongly almost periodic measure to be a Fourier transform of a measure.
We discuss the Fourier transformability of a measure on $\RR^d$ in terms of its Fourier transform as a tempered distribution. We conclude by looking at a large class of such measures coming from the cut and project formalism.
\end{abstract}

\maketitle

\section{Introduction}

The Fourier transform of functions plays a fundamental role in many areas of mathematics. In the first half of the $20^{\rm th}$ century, Laurent Schwartz extended the Fourier transform to a larger class of objects, namely tempered distribution. This theory extends the classical Fourier transform of functions, and includes all finite measures, all continuous and bounded functions as well as a large class of unbounded measures. Some of the notions have been extended to arbitrary locally compact abelian groups (LCAG's) $G$  \cite{BRU}, but so far these extensions are not as useful for the study of measures as in the case $G=\RR^d$.

Motivated by Bochner's Theorem, Argabright and deLamadrid introduced the notion of Fourier transform for unbounded measures over arbitrary locally compact Abelian groups (LCAG's), and proved that positive definite measures are Fourier transformable \cite{ARMA1} (see also \cite{BF,MoSt}). Their theory of Fourier transform of measures generalizes the classical theory of Fourier transform of functions, as well as the Fourier-Stieltjes transform. The Fourier transform of measures plays a fundamental role for mathematical diffraction and aperiodic order(see for example \cite{TAO,BL3,LAG,DL03,MoSt,CRS,CRS2,NS11}).

There is a hidden strong connection between the Fourier transform of measures and the class of (weakly) almost periodic functions and measures. Eberlein proved that there exists a canonical decomposition of a weakly almost periodic function into a strongly almost periodic function and a null weakly almost periodic function \cite{EBE3}. We will refer to this decomposition as the \textbf{Eberlein decomposition}. Positive definite continuous functions, and hence the Fourier transform of any finite measure, are weakly almost periodic \cite{EBE}. Given a finite measure, $\mu$, the Eberlein decomposition
of the weakly almost periodic function $\widehat{\mu}$ is exactly the Fourier transform \cite{EBE,MoSt} of the Lebesgue decomposition
\begin{displaymath}
  \mu=\mu_{\mathsf{pp}}+\mu_{\mathsf{c}} \,.
\end{displaymath}

GildeLamadrid and Argabright extended the concept of almost periodicity to translation bounded measures, via convolution with compactly supported continuous functions \cite{ARMA} (see also \cite{MoSt} for a self contained exposition of these topics). They showed that the weakly almost periodic measures also have a canonical Eberlein decomposition. Moreover, the Fourier transform $\widehat{\mu}$ of each transformable measure $\mu$ is weakly almost periodic, and the Eberlein decomposition of the Fourier transform is exactly the dual of the Lebesgue decomposition of $\mu$ \cite{ARMA}.
Recently, the Fourier dual of this result was proven by Moody and I \cite{MoSt}: if any translation bounded Fourier transformable measure $\mu$ is weakly almost periodic, the strong almost periodic component $\mu_{\mathsf{s}}$ and the null weakly almost periodic component $\mu_0$ are Fourier transformable, and their Fourier transforms are exactly the pure point component $\widehat{\mu}_{\mathsf{pp}}$ and the continuous component $\widehat{\mu}_{\mathsf{c}}$ of $\widehat{\mu}$. This last version of the result is important for mathematical diffraction, since we would like to study the pure point spectrum $\widehat{\gamma}_{\mathsf{pp}}$ and the continuous spectrum $\widehat{\gamma}_{\mathsf{pp}}$ of a structure $\omega$, without going to the Fourier dual space. These results allow us to study the pure point and continuous spectra, respectively, by studying the components $\gamma_{\mathsf{s}}$ and $\gamma_0$, respectively, of the autocorrelation $\gamma$ of $\omega$, an idea which was used effectively in many places, (such as \cite{JBA,TAO,LR,NS1,NS2,NS4,NS5}, to name a few). The particular connection between strong almost periodicity and pure point Fourier transform was also exploited in articles such as \cite{TAO,BL,BL2,BL3,BLM,BM,FAV,Fa15,LR,LS,LS2,MS,CR,CRS,CRS2,NS3,NS11}.

It follows from the results in \cite{ARMA} that if a measure $\mu$ is Fourier transformable, its Fourier transform $\widehat{\mu}$ is strongly almost periodic exactly when $\mu$ is a pure point measure. In this case, the strongly almost periodic measures $\widehat{\mu}$ has a Fourier Bohr series (see Definition~\ref{defi:FB series} below) $\cF_{\mathsf{d}}(\widehat{\mu})$, which is exactly the reflection $\mu^\dagger$ of $\mu$. Same way, if $\mu$ is Fourier transformable, its Fourier transform $\widehat{\mu}$ is pure point exactly when $\mu$ is strongly almost periodic, and $\widehat{\mu}$ is exactly the Fourier-Bohr series $\cF_{\mathsf{d}}(\mu)$.

Every strongly almost periodic measure $\mu$ comes with a Fourier-Bohr series $\cF_{\mathsf{d}}(\mu)$, which is exactly $\widehat{\mu}$ (respectively $\widecheck{\mu}^\dagger$) whenever when $\mu$ is Fourier transformable (or a Fourier transform). It is natural to ask what extra condition should $\cF_{\mathsf{d}}(\mu)$ satisfy in order for $\mu$ to be Fourier transformable (respectively a Fourier transform). The main goal of this paper is to answer to these two questions.

We show in Theorem~\ref{thm: main result} that a necessary and sufficient condition for a strongly almost periodicity measure $\mu$  to be Fourier transformable is a certain integrability condition, which we call weak admissibility (see Defi.~\ref{defi:admiss}, Defi.~\ref{defi:admiss formal sums} below) being satisfied by the Fourier Bohr series. The second question is answered in Theorem~\ref{thm: sap as FT}: we show that a strongly almost periodic measure $\mu$ is a Fourier transform if and only if $\mu$ is weakly admissible and its Fourier Bohr series is a measure.

In the particular case $G=\RR^d$, which is the case in most of the practical applications, we use a result of Lin \cite{lin} to show that the weak admissibility condition can be replaced by the much more concrete notion of translation boundedness. As a consequence, we get that a strongly almost periodic measure $\mu \in \SAP(\RR^d)$ is Fourier transformable if and only if its Fourier Bohr series is a translation bounded measure.
Same way, a strongly almost periodic measure $\mu \in \SAP(\RR^d)$ is a Fourier transform if and only if its Fourier Bohr series is a measure.

We also study the connection between the Fourier transformability of a measure in $\RR^d$ and its Fourier transformability as a tempered distribution.
In \cite{ARMA1}, the authors introduced a measure $\mu$ on $\RR$, which is positive definite, tempered as a distribution, but for which the variation measure $|\mu|$ is not tempered. In particular $\mu$ is not translation bounded as a measure. Since $\mu$ is positive definite, it is Fourier transformable and its Fourier transform $\widehat{\mu}=: \nu$ is translation bounded \cite{ARMA1}. It follows that $\nu$ is a tempered distribution, whose Fourier transform is the measure $\mu^\dagger$, but which is not Fourier transformable as a measure (see \cite{ST} for more details). This raises an interesting question: what is the connection between the Fourier transform of measures on $\RR^d$ and their Fourier transform as distributions (compare \cite{ST}). We answer this question in Theorem \ref{thm: FT measure and distribution}: we show that a translation bounded measure $\mu$ on $\RR^d$ is Fourier transformable as a measure if and only if its Fourier transform in tempered distribution sense is a translation bounded measure. Moreover, in this case, the two Fourier transforms coincide.

\section{Definitions and notations}

Throughout this paper $G$ will denote a locally compact abelian group (LCAG). We will denote by $\Cu(G)$ the space of uniformly continuous and bounded functions on $G$. $\Cz(G)$ and $\Cc(G)$ will denote the subspaces of $\Cu(G)$ consisting of functions vanishing at infinity, and functions with compact support respectively.

\smallskip

In the spirit of Bourbacki \cite{BOURB}, by a \textbf{measure} we understand a linear function $\mu : \Cc(G) \to \CC$ such that, for each compact set $K \subset G$ there exists a constant $C_K$ such that, for all $f \in \Cc(G)$ with $\supp(f) \subset K$ we have
\begin{displaymath}
\left| \mu(f) \right| \leq C_k \| f \|_\infty \,.
\end{displaymath}
The equivalence between this definition and the measure theory definition of regular Radon measures is provided by the Riesz-Representation Theorem \cite{Reiter,ReiterSte} (see also \cite[Appendix]{CRS2} for a discussion of this).

We will use often $\langle \mu, f\rangle$ or $\int_{G} f(t) d \mu(t)$ instead of $\mu(f)$.

\smallskip

Next, let us recall the definition of Fourier transformability for measures.

\begin{definition} A measure $\mu$ is called \textbf{Fourier transformable} if there exists some measure $\widehat{\mu}$ on $\widehat{G}$ such that, for all $f \in \Cc(G)$ we have $\left|\widecheck{f}\right|^2 \in L^1(|\widehat{\mu|})$ and
\begin{displaymath}
\langle \mu, f*\widetilde{f} \rangle = \langle\widehat{\mu}, \left|\widecheck{f}\right|^2\rangle \,.
\end{displaymath}
\end{definition}

\smallskip
In the spirit of \cite{ARMA} we define
\begin{displaymath}
\Kt(G):= \mbox{Span} \{ f*g | f, g \in \Cc(G) \} \,.
\end{displaymath}
Given a subspace $V \subset L^1(G)$ we will denote by
\begin{displaymath}
\reallywidehat{V}:= \{ \widehat{f} | f \in V \} \subset \Cz(\widehat{G}) \,
\end{displaymath}
\begin{remark}
\begin{itemize}
\item[(i)] By the depolarisation identity, a measure is Fourier transformable if and only if there exists some measure $\widehat{\mu}$ on $\widehat{G}$ such that, $\widehat{\Kt(G)} \subset L^1(|\widehat{\mu|})$ and for all $f \in \Kt(G)$ we have
\begin{displaymath}
\langle \mu, f \rangle = \langle\widehat{\mu}, \widecheck{f}\rangle \,.
\end{displaymath}
\item[(ii)] Any positive definite measure is Fourier transformable and its transform is positive \cite{ARMA1,BF}.
\end{itemize}
\end{remark}

\smallskip

Next, recall that for any measure $\mu$, there exists \cite{Ped,ReiterSte} a positive measure $|\mu|$, called the \textbf{variation of $\mu$} such that, for all $f \in \Cc(G)$ with $f \geq 0$ we have
\begin{displaymath}
|\mu|(f) = \sup \{ \left| \mu(g) \right| \, | \, g \in \Cc(G), |g| \leq f \} \,.
\end{displaymath}
For details about the existence of the variation measure we refer the reader to \cite[Appendix]{CRS2}.

\smallskip

Let us recall now the definition of translation boundedness.

\begin{definition} A measure $\mu$ is called \textbf{translation bounded} if for all compact sets $K \subset G$ we have
\begin{displaymath}
\| \mu \|_K := \sup_{x \in G} \left| \mu \right|(x+K) < \infty \,.
\end{displaymath}
We denote the space of translation bounded measures by $\cM^\infty(G)$.
\end{definition}

\begin{remark}
\begin{itemize}
\item [(i)] A measure $\mu$ is translation bounded if and only if
\begin{displaymath}
\| \mu \|_K < \infty \,,
\end{displaymath}
for one compact set $K$ with non-empty interior \cite{BM}.
\item [(ii)] If $K$ is a fixed compact set with non-empty interior, then $\| \, \|_K$ is a norm on $\cM^\infty(G)$.
\end{itemize}
\end{remark}

An alternate characterisation of translation boundedness is given by the following result:
\begin{theorem}\cite[Thm.~1.1]{ARMA1} A measure $\mu$ is translation bounded if and only if for all $f \in \Cc(G)$ we have $\mu*f \in \Cu(G)$.
\end{theorem}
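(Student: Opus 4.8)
The plan is to prove the two implications separately: the forward one by a direct estimate on the convolution integral, and the converse by a uniform boundedness argument that upgrades the hypothesis to a single norm estimate.

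\emph{Forward implication.} Suppose $\mu$ is translation bounded and fix $f\in\Cc(G)$ with $\supp(f)\subseteq K$, $K$ compact. Writing $(\mu*f)(x)=\int_G f(x-t)\,d\mu(t)$, the integrand vanishes unless $t\in x-K$, so $|(\mu*f)(x)|\le\|f\|_\infty\,|\mu|(x-K)\le\|f\|_\infty\,\|\mu\|_{-K}<\infty$, and $\mu*f$ is bounded. For uniform continuity, use that $f$, being continuous with compact support, is uniformly continuous: given $\varepsilon>0$ pick a relatively compact open neighbourhood $U$ of $0$ with $|f(s)-f(s')|<\varepsilon$ whenever $s-s'\in U$, and put $K':=\overline U+K$. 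If $x-y\in U$, the function $t\mapsto f(x-t)-f(y-t)$ is supported in $x-K'$ and bounded there by $\varepsilon$, so $|(\mu*f)(x)-(\mu*f)(y)|\le\varepsilon\,\|\mu\|_{-K'}$, a bound independent of $x,y$. Hence $\mu*f\in\Cu(G)$.

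\emph{Converse implication.} Fix a relatively compact open neighbourhood $V$ of $0$ and set $K:=\overline V$, a compact set with non-empty interior; let $C_K(G)$ be the Banach space of continuous functions supported in $K$ under the supremum norm. For each $x\in G$ define $\Lambda_x\colon C_K(G)\to\CC$ by $\Lambda_x(f):=(\mu*f)(x)=\langle\mu,f(x-\cdot)\rangle$; since $t\mapsto f(x-t)$ has support in the compact set $x-K$, the defining estimate for the measure $\mu$ shows that each $\Lambda_x$ is a bounded functional. For each fixed $f\in C_K(G)$ the hypothesis gives $\mu*f\in\Cu(G)$, hence $\sup_{x\in G}|\Lambda_x(f)|=\|\mu*f\|_\infty<\infty$. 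By the uniform boundedness principle there is a constant $C$ with $|(\mu*f)(x)|\le C\,\|f\|_\infty$ for all $x\in G$ and all $f\in C_K(G)$.

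It remains to convert this into a translation bound. Given $x_0\in G$ and $h\in\Cc(G)$ with $\|h\|_\infty\le 1$ and $\supp(h)\subseteq x_0-K$, the function $f(t):=h(x_0-t)$ lies in $C_K(G)$ with $\|f\|_\infty\le 1$ and $(\mu*f)(x_0)=\langle\mu,h\rangle$, so $|\langle\mu,h\rangle|\le C$. By the definition of the variation measure, this yields $|\mu|(\varphi)\le C$ for every $\varphi\in\Cc(G)$ with $0\le\varphi\le 1$ and $\supp(\varphi)\subseteq x_0-K$; taking the supremum over such $\varphi$ supported in the open set $x_0-V$ and using inner regularity of $|\mu|$ gives $|\mu|(x_0-V)\le C$, uniformly in $x_0$. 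Covering the compact set $-K=\overline{-V}$ by finitely many, say $n$, translates of the open neighbourhood $-V$ of $0$ then gives $|\mu|(x_0-K)\le nC$ for all $x_0$, i.e. $\|\mu\|_{-K}\le nC<\infty$. Since $-K$ has non-empty interior, the Remark following the definition of translation boundedness shows $\mu\in\cM^\infty(G)$.

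I expect the converse to be the main obstacle. The point is that boundedness of $\mu*f$ a priori controls $\mu$ only against the translates of a single fixed function $f$, whereas translation boundedness requires control against \emph{all} test functions bounded by $1$ with support in a fixed compact set --- including oscillating ones that could exploit cancellation in $\mu$. The uniform boundedness principle (equivalently, a closed graph argument) is precisely what bridges this gap by producing one constant $C$ valid for all $f$ and all $x$; once that estimate is in hand, the passage to the variation measure and the open-versus-compact bookkeeping are routine.
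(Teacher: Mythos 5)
Your proof is correct. Note, however, that the paper does not prove this statement at all: it is quoted verbatim from Argabright--de Lamadrid \cite[Thm.~1.1]{ARMA1}, so there is no in-paper argument to compare against. Your two directions are both sound: the forward estimate $|(\mu*f)(x)-(\mu*f)(y)|\le\varepsilon\,\|\mu\|_{-K'}$ with $K'=\overline U+K$ is exactly the routine computation, and in the converse the key move --- applying the uniform boundedness principle to the family $\Lambda_x(f)=(\mu*f)(x)$ on the Banach space $C_K(G)$, then transferring the resulting single constant to the variation measure via $g\mapsto|\mu|$, an open set $x_0-V$, and a finite covering of $-K$ by translates of $-V$ --- is precisely the classical mechanism behind the cited result, and it is the same functional-analytic device (Banach--Steinhaus/closed graph on the space of test functions with support in a fixed compact set) that the paper itself deploys in the proof of Theorem~\ref{thm: admis implies strong admis}(i). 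One small point worth making explicit is that $C_K(G)$ is complete for $\|\cdot\|_\infty$ (it is closed in $\Cz(G)$), since this is what licenses Banach--Steinhaus; with that remark added, the argument is a complete and self-contained proof of the quoted theorem, and it only uses the boundedness part of the hypothesis $\mu*f\in\Cu(G)$, which shows the formally weaker assumption ``$\mu*f$ bounded for all $f\in\Cc(G)$'' already implies translation boundedness.
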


\subsection{Almost Periodic Measures}

In this subsection we review briefly the basic properties of almost periodic functions and measures. For a more detailed review of this we refer the reader to \cite{MoSt}.

\begin{definition}

A function $f \in \Cu(G)$ is called \textbf{strong almost periodic} or \textbf{Bohr almost periodic} if the closure with respect to $\| \, \|_\infty$ is compact in $(\Cu(G), \| \, \|_\infty)$.

A function $f \in \Cu(G)$ is called \textbf{weakly almost periodic} if the closure with respect to the weak topology of the Banach space $(\Cu(G), \| \, \|_\infty)$ is weakly-compact.

We denote the spaces of strong respectively weakly almost periodic functions by $SAP(G)$ respectively $WAP(G)$.
\end{definition}

\smallskip

\begin{remark}
\begin{itemize}
\item[(i)] $WAP(G)$ and $SAP(G)$ are closed subspaces of $(\Cu(G), \| \, \|_\infty)$ \cite{EBE} (see also \cite{MoSt}).
\item[(ii)] $WAP(G)$ and $SAP(G)$ are closed under multiplication, complex conjugation, reflection and taking the absolute value \cite{EBE}.
\end{itemize}
\end{remark}

\smallskip
Next, we review the notion of null weakly almost periodicity for functions. We first need to recall the definition of the mean of a weakly almost periodic function.

\begin{lemma}\cite{EBE,MoSt} Let $f \in WAP(G)$ and let $\{ A_n \}$ be a F\"olner sequence in $G$. Then, the limit
\begin{displaymath}
\lim_{n} \frac{1}{\vol(A_n)} \int_{x+A_n} f(t) dt
\end{displaymath}
exists uniformly in $x \in G$, and is independent of $x$ and of the choice of the F\"olner sequence $\{A_n \}$.
\end{lemma}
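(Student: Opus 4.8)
The plan is to realise every Følner average as an element of one fixed weakly compact convex set of functions, to produce a translation-invariant --- hence constant --- point of this set by the Markov--Kakutani fixed point theorem, and then to use the Følner property to force all of the averages to converge \emph{uniformly} to that constant.

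First I would fix the notation $f_s(t):=f(s+t)$ and form the orbit $\mathcal O(f)=\{f_s: s\in G\}\subset \Cu(G)$; each $f_s$ is again uniformly continuous and bounded (here the group structure is used: $(s+t)-(s+t')=t-t'$), and since $f\in WAP(G)$ the weak closure of $\mathcal O(f)$ is weakly compact. Let $K:=\overline{\mathrm{conv}}^{\,\|\cdot\|_\infty}(\mathcal O(f))$ be the \emph{norm}-closed convex hull of the orbit. Since the norm-closed convex hull of a weakly compact subset of a Banach space is again weakly compact (Krein's theorem), $K$ is weakly compact; it is convex, and it is invariant under each translation operator $T_s: g\mapsto g_s$, which is a norm-bounded linear map and hence weak--weak continuous. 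As $G$ is abelian the family $\{T_s\}_{s\in G}$ is commuting, so Markov--Kakutani produces some $g\in K$ with $g_s=g$ for all $s\in G$; that is $g(s+t)=g(t)$ for all $s,t$, so $g\equiv c$ for a constant $c$.

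Next I would observe that, for each $n$, the average $M_n(x):=\tfrac{1}{\vol(A_n)}\int_{x+A_n}f(t)\,dt=\tfrac{1}{\vol(A_n)}\int_{A_n}f_t(x)\,dt$ is the $\Cu(G)$-valued mean over $A_n$ of the map $t\mapsto f_t$, which is norm-continuous precisely because $f$ is uniformly continuous; hence $M_n$ is a norm limit of finite convex combinations of elements of $\mathcal O(f)$, so $M_n\in K$. Now fix $\varepsilon>0$ and, for a given constant $\gamma\in K$, choose a finite convex combination $h=\sum_i\lambda_i f_{s_i}$ with $\|h-\gamma\|_\infty<\varepsilon$; averaging $h$ over $A_n$ and comparing termwise with $M_n$, each difference equals $\tfrac{1}{\vol(A_n)}\int_{A_n}\big(f(s_i+x+t)-f(x+t)\big)\,dt$, which is bounded by $\|f\|_\infty\,\vol\big((s_i+A_n)\triangle A_n\big)/\vol(A_n)\to0$ by the Følner property, while the average of $h$ stays within $\varepsilon$ of the constant $\gamma$. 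Hence $\limsup_n\|M_n-\gamma\|_\infty\le\varepsilon$, and therefore $\|M_n-\gamma\|_\infty\to0$. Applied to the constant $c$ of the previous paragraph this shows the limit exists uniformly in $x$ and equals $c$, hence is independent of $x$; applied to two Følner sequences, uniqueness of norm limits shows independence of the Følner sequence (and, incidentally, that $c$ is the only constant lying in $K$).

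The only genuine obstacle is the soft-analytic input at the start: one must know that $f\in WAP(G)$ forces the \emph{norm}-closed convex hull $K$ of the orbit --- not merely its weakly closed convex hull --- to contain a constant, since the Følner estimate only bites once that constant can be approximated in norm by a finite convex combination of translates. The reconciliation of the two demands on $K$ (weakly compact, so Markov--Kakutani applies; norm-closed, so such approximations exist) via Krein's theorem is the crux; everything after that is the routine Følner computation indicated above. An equivalent alternative is to pass to the $WAP$-compactification of $G$ and take $c$ to be the integral of the canonical extension of $f$ against the unique invariant mean, but this requires essentially the same structure theory.
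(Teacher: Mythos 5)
Your argument is correct, and it is worth noting that the paper itself offers no proof of this lemma: it is quoted from Eberlein and Moody--Strungaru. The cited sources obtain the mean from Eberlein's abstract ergodic theorem, respectively from the unique invariant mean on $WAP(G)$ constructed through the weakly-almost-periodic (or Bohr) compactification; your route replaces that machinery by Krein's theorem plus Markov--Kakutani, which is legitimate precisely because $G$ is abelian (for a noncommutative group one would need Ryll--Nardzewski at this point), and then extracts the convergence by the Følner estimate $\vol\bigl((s+A_n)\triangle A_n\bigr)/\vol(A_n)\to 0$ applied to a finite convex combination of translates that norm-approximates the invariant element. The two small points you should make explicit are harmless: the identification of the norm-closed convex hull of the orbit with the closed convex hull of its weak closure (Mazur's theorem, so that Krein applies), and the fact that the claim $M_n\in K$ requires the standard remark that a vector-valued average of the norm-continuous map $t\mapsto f_t$ over a compact set lies in the closed convex hull of its range --- though, as your own argument shows, this membership is not actually needed for the convergence, only for the incidental observation that $c$ is the unique constant in $K$. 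What your approach buys is a self-contained, elementary proof for abelian $G$ that simultaneously yields existence of the limit, uniformity in $x$, and independence of the Følner sequence (the fixed point $c$ is defined without reference to any Følner sequence); what the references' approach buys is the full semigroup/mean structure of $WAP(G)$ (positivity and uniqueness of the invariant mean, the Eberlein decomposition), which this paper uses elsewhere.
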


\begin{definition} Let $f \in \WAP(G)$ and let $\{ A_n \}$ be a F\"olner sequence in $G$. The number
\begin{displaymath}
M(f):= \lim_{n} \frac{1}{\vol(A_n)} \int_{A_n} f(t) dt  \,,
\end{displaymath}
is called the \textbf{mean of $f$}.

A function $f \in WAP(G)$ is called \textbf{null weakly almost periodic} if $M(|f|)=0$. We denote the space of null weakly almost periodic functions by $WAP_0(G)$.
\end{definition}

\bigskip

In the spirit of \cite{ARMA} we extend the notions of almost periodicity to measures (see also \cite{MoSt}).

\begin{definition} A measure $\mu \in \cM^\infty(G)$ is called \textbf{strong almost periodic}, \textbf{weakly almost periodic} and \textbf{null weakly almost periodic}, respectively, if for all $f \in \Cc(G)$ the function
$f*\mu$ is strong almost periodic, weakly almost periodic respectively null weakly almost periodic.

We will denote the spaces of almost periodic measures by $\SAP(G), \WAP(G)$ respectively $\WAP_0(G)$.
\end{definition}

Similar to functions, weakly almost periodic measures have a well defined mean:

\begin{lemma} \cite{ARMA,MoSt} Let $\mu \in \WAP(G)$. Then, there exists a number $M(\mu)$ such that, for all $f \in \Cc(G)$ we have
\begin{displaymath}
M(\mu*f) = M(\mu) \int_{G} f(t) dt \,.
\end{displaymath}
Moreover, if $\{ A_n \}$ is any van Hove sequence in $G$, we have
\begin{displaymath}
M(\mu)= \lim_{n} \frac{\mu(x+A_n)}{\vol(A_n)} \,,
\end{displaymath}
uniformly in $x$.
\end{lemma}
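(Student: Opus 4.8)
The plan is to produce the constant $M(\mu)$ as a translation invariant linear functional built from the means of the functions $\mu*f$, and then to identify that functional with the van Hove densities of $\mu$. Since $\mu \in \WAP(G) \subset \cM^\infty(G)$, for each $f \in \Cc(G)$ the convolution $\mu*f = f*\mu$ is a weakly almost periodic function and hence has a well-defined mean $M(\mu*f)$; I would set $\nu\colon \Cc(G)\to\CC$, $\nu(f):=M(\mu*f)$. Linearity of convolution and of the mean make $\nu$ linear, and if $\supp(f)\subset K$ then
\[
|(\mu*f)(x)| \;\le\; \|f\|_\infty\,|\mu|(x-K) \;\le\; \|f\|_\infty\,\|\mu\|_{-K}\qquad(x\in G),
\]
so $|\nu(f)| \le \|\mu\|_{-K}\,\|f\|_\infty$; thus $\nu$ is a measure in the sense used here.

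The next step is to check that $\nu$ is translation invariant. Writing $T_xg:=g(\cdot-x)$, convolution commutes with translations, so $\mu*(T_xf)=T_x(\mu*f)$, and since the mean of a weakly almost periodic function is invariant under translations we obtain $\nu(T_xf)=M(T_x(\mu*f))=M(\mu*f)=\nu(f)$. A translation invariant measure on $G$ is necessarily a scalar multiple of Haar measure: a short Fubini computation, using invariance of both $\nu$ and $\vol$ together with the group structure, gives $\nu(f)\int_G g = \nu(g)\int_G f$ for all $f,g\in\Cc(G)$, and then fixing $g$ with $\int_G g=1$ forces $\nu=\nu(g)\,\vol$. Calling this scalar $M(\mu)$, we get $M(\mu*f)=\nu(f)=M(\mu)\int_G f(t)\,dt$ for every $f\in\Cc(G)$, which is the first assertion.

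For the second assertion, fix a van Hove sequence $\{A_n\}$; it is in particular a F\"olner sequence, so by the lemma recalled above the mean of any weakly almost periodic function is the uniform (in $x$) limit of its averages over $\{x+A_n\}$. Choose $f\in\Cc(G)$ with $f\ge 0$, $\supp(f)\subset K$ and $\int_G f(t)\,dt=1$. Applying this to $\mu*f$ gives
\[
\frac{1}{\vol(A_n)}\int_{x+A_n}(\mu*f)(y)\,dy \;\longrightarrow\; M(\mu*f)=M(\mu)
\]
uniformly in $x\in G$. By Fubini (the relevant integration is over a fixed compact set, against $|\mu|$), $\int_{x+A_n}(\mu*f)(y)\,dy=\int_G h_{n,x}(t)\,d\mu(t)$ where $h_{n,x}(t):=\int_{x+A_n}f(y-t)\,dy$ satisfies $0\le h_{n,x}\le 1$, equals $1$ when $t+K\subset x+A_n$, and vanishes when $(t+K)\cap(x+A_n)=\emptyset$; hence $h_{n,x}$ agrees with $\mathbf 1_{x+A_n}$ off a region contained in the $K$-boundary $\partial^{K}(x+A_n)=x+\partial^{K}A_n$, so
\[
\Bigl| \int_G h_{n,x}(t)\,d\mu(t) - \mu(x+A_n) \Bigr| \;\le\; |\mu|\bigl(\partial^{K}(x+A_n)\bigr).
\]

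The step I expect to require the most care is showing that the right-hand side, divided by $\vol(A_n)$, tends to $0$ uniformly in $x$. This is where translation boundedness is essential: fixing a compact $K_0$ with nonempty interior and covering $\partial^{K}A_n$ by $N_n$ translates of $K_0$, one gets $|\mu|(\partial^{K}(x+A_n))\le N_n\|\mu\|_{K_0}$ uniformly in $x$, while a packing/volume estimate together with the van Hove property (for a suitable compact set) gives $N_n/\vol(A_n)\to 0$. Combining this with the two displayed lines yields $\vol(A_n)^{-1}\mu(x+A_n)\to M(\mu*f)=M(\mu)$ uniformly in $x$, which completes the proof.
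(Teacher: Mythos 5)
Your argument is correct, and it is worth noting that the paper itself does not prove this lemma at all: it is quoted from \cite{ARMA,MoSt}, so there is no internal proof to compare against. Your route is the standard one used in those sources: the functional $f \mapsto M(\mu*f)$ is well defined because $\mu \in \WAP(G) \subset \cM^\infty(G)$ forces $\mu*f$ to be a (weakly almost periodic, hence bounded) function, it is a measure by the bound $|M(\mu*f)| \le \|\mu\|_{-K}\|f\|_\infty$, and translation invariance plus the uniqueness of Haar measure (your Fubini identity $\nu(f)\int_G g = \nu(g)\int_G f$ is valid for complex Radon measures, since the integrand $f(y)g(x+y)$ has compact support) yields $\nu = M(\mu)\,\theta_G$, which is the first claim. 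The second half is also the standard van Hove argument, and the two places you flag as delicate are exactly the ones that need care: first, the set where $h_{n,x}$ differs from $1_{x+A_n}$ is contained in $x+\partial^{K'}A_n$ only for a suitable compact $K'$ built from both $K$ and $-K$ (the two exceptional cases you list, $t\in x+A_n$ with $t+K\not\subset x+A_n$ and $t\notin x+A_n$ with $(t+K)\cap(x+A_n)\neq\emptyset$, sit in different pieces of the van Hove boundary), so the bookkeeping depends on the precise definition of $\partial^{K}$; second, the covering estimate $N_n/\vol(A_n)\to 0$ is asserted rather than proved --- the usual way is to take a maximal packing of $\partial^{K'}A_n$ by disjoint translates of a small open set $V'$, which gives $N_n\,\theta_G(V') \le \theta_G(\partial^{K'}A_n + V') \le \theta_G(\partial^{K''}A_n)$ for a larger compact $K''$, and then the van Hove property finishes it. You also implicitly use that every van Hove sequence is F\o lner (needed to invoke the mean lemma for $\mu*f$), which is true but deserves a citation. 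With those standard details filled in, the proof is complete and matches the approach of \cite{ARMA,MoSt}.
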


\smallskip

As proven by Eberlein for functions \cite{EBE3}, and Argabright and deLamadrid for measures \cite{ARMA}, the space $\SAP(G)$ is a direct summand in $\WAP(G)$ and $\WAP_0(G)$ is its complement.

\begin{theorem} \cite{ARMA}
\begin{displaymath}
\WAP(G) = \SAP(G) \bigoplus \WAP_0(G) \,.
\end{displaymath}
In particular, every measure $\mu \in \WAP(G)$ can be written uniquely
\begin{displaymath}
\mu=\mu_{\mathsf{s}}+ \mu_0 \,,
\end{displaymath}
with $\mu_{\mathsf{s}} \in \SAP(G), \mu_0 \in \WAP_0(G)$. We will refer to this as the \textbf{Eberlein decomposition of $\mu$}.
\end{theorem}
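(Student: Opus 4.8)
The plan is to lift the Eberlein decomposition for weakly almost periodic \emph{functions}, $\WAP(G)=\SAP(G)\oplus\WAP_0(G)$ (Eberlein \cite{EBE,EBE3}, see also \cite{MoSt}), to measures by convolving against test functions. Write $(L_yg)(x):=g(x-y)$ for the translation action and let $P\colon\WAP(G)\to\WAP(G)$ be the projection onto $\SAP(G)$ along $\WAP_0(G)$. Since $\SAP(G)$ and $\WAP_0(G)$ are closed subspaces of $(\Cu(G),\|\cdot\|_\infty)$ whose algebraic sum is $\WAP(G)$, the projection $P$ is bounded by the closed graph theorem (alternatively, $\|P\|=1$ is part of Eberlein's theorem, which I would quote from \cite{EBE3,MoSt}); and since both summands are translation invariant, $P$ commutes with translations, $P\circ L_y=L_y\circ P$.

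Fix $\mu\in\WAP(G)\subseteq\cM^\infty(G)$. For $f\in\Cc(G)$ the function $\mu*f$ lies in $\WAP(G)$ by definition, so I would set $T_{\mathsf s}f:=P(\mu*f)$ and $T_0f:=(\mathrm{id}-P)(\mu*f)$, obtaining linear maps $T_{\mathsf s},T_0\colon\Cc(G)\to\Cu(G)$. Because $\mu*L_yf=L_y(\mu*f)$ and $P$ commutes with translations, $T_{\mathsf s}$ and $T_0$ commute with translations; and because $\mu$ is translation bounded there is, for each compact $K$, a constant $c_K$ with $\|\mu*f\|_\infty\le c_K\|f\|_\infty$ whenever $\supp f\subseteq K$, so $\|T_{\mathsf s}f\|_\infty\le\|P\|\,c_K\|f\|_\infty$ and likewise for $T_0$. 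A linear map $T\colon\Cc(G)\to\Cu(G)$ that commutes with translations and is locally bounded in this sense is convolution by a measure: one puts $\nu(f):=(Tf^\dagger)(0)$ with $f^\dagger(x):=f(-x)$; local boundedness makes $\nu$ a measure, the identity $(\nu*f)(x)=\nu(L_xf^\dagger)$ combined with $TL_y=L_yT$ yields $\nu*f=Tf$ for all $f\in\Cc(G)$, and since then $\nu*f\in\Cu(G)$ for every $f\in\Cc(G)$, $\nu$ is translation bounded by \cite[Thm.~1.1]{ARMA1}. Applying this to $T_{\mathsf s}$ and $T_0$ produces $\mu_{\mathsf s},\mu_0\in\cM^\infty(G)$ with $\mu_{\mathsf s}*f=P(\mu*f)$ and $\mu_0*f=(\mathrm{id}-P)(\mu*f)$ for all $f\in\Cc(G)$.

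It then remains to read off the properties. For every $f\in\Cc(G)$,
\[
\mu*f=P(\mu*f)+(\mathrm{id}-P)(\mu*f)=\mu_{\mathsf s}*f+\mu_0*f,
\]
and two measures agreeing after convolution with every $f\in\Cc(G)$ agree (evaluate at the identity of $G$), so $\mu=\mu_{\mathsf s}+\mu_0$. By construction $\mu_{\mathsf s}*f=P(\mu*f)\in\SAP(G)$ and $\mu_0*f=(\mathrm{id}-P)(\mu*f)\in\WAP_0(G)$ for all such $f$, hence $\mu_{\mathsf s}\in\SAP(G)$ and $\mu_0\in\WAP_0(G)$ (in particular both lie in $\WAP(G)$). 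For uniqueness, if also $\mu=\nu_{\mathsf s}+\nu_0$ with $\nu_{\mathsf s}\in\SAP(G)$, $\nu_0\in\WAP_0(G)$, then $\mu*f=\nu_{\mathsf s}*f+\nu_0*f$ is precisely the Eberlein decomposition of the function $\mu*f$; uniqueness at the level of functions gives $\nu_{\mathsf s}*f=P(\mu*f)=\mu_{\mathsf s}*f$ for all $f$, whence $\nu_{\mathsf s}=\mu_{\mathsf s}$ and $\nu_0=\mu_0$. The same reasoning shows $\SAP(G)\cap\WAP_0(G)=\{0\}$ for measures, so the sum is direct and $\WAP(G)=\SAP(G)\oplus\WAP_0(G)$.

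The estimate $\|\mu*f\|_\infty\le c_K\|f\|_\infty$ and the operator-to-measure representation are routine, and the genuinely deep input, Eberlein's decomposition for functions, is being used here as a black box. The step I expect to be the real crux of \emph{this} argument is exploiting that the function-level projection $P$ is simultaneously bounded \emph{and} translation equivariant — this is exactly what allows the decomposition to survive convolution — followed by the use of the translation-boundedness criterion $\mu*f\in\Cu(G)$ to upgrade the resulting linear functionals to genuine translation bounded measures. The only thing that needs care is the bookkeeping of reflections in $(\nu*f)(x)=\nu(L_xf^\dagger)$, which I would pin down once and for all with the conventions stated above.
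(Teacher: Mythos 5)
Your argument is correct, but note that the paper itself offers no proof of this statement: it is quoted verbatim from Gil de Lamadrid--Argabright \cite{ARMA}, so there is no in-paper proof to match. Your route is a legitimate and essentially self-contained alternative: you take Eberlein's function-level decomposition as a black box, observe that the projection $P$ of $WAP(G)$ onto $SAP(G)$ along $WAP_0(G)$ is bounded (closed graph, or $\|P\|\le 1$) and translation equivariant, transport it to a measure $\mu\in\WAP(G)$ through the operators $f\mapsto P(\mu*f)$, and then use local boundedness plus translation equivariance to realize these operators as convolution by measures $\mu_{\mathsf{s}},\mu_0$, which are translation bounded by the criterion $\nu*f\in\Cu(G)$ of \cite[Thm.~1.1]{ARMA1}; uniqueness then falls out of uniqueness at the function level, and your bookkeeping of reflections in $\nu(f)=(Tf^\dagger)(0)$ and $(\nu*f)(x)=\nu(L_xf^\dagger)$ checks out. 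The approach actually taken in \cite{ARMA} (and echoed later in this paper, e.g.\ the use of \cite[Cor.~7.2]{ARMA} in the proof of Theorem~\ref{them:admis Eber decomp}) is more constructive: the strongly almost periodic part is produced as $\mu_{\mathsf{s}}=\lim_\alpha \mu\circledast f_\alpha$ for an approximate identity $f_\alpha$ of $(SAP(G),\circledast)$, which besides existence also yields the approximation formula that the paper exploits elsewhere. Your abstract operator-theoretic construction buys brevity and avoids the Eberlein-convolution machinery, at the cost of not providing that explicit formula; both arguments ultimately rest on the same deep input, Eberlein's decomposition for functions.
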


For Fourier transformable measures the Eberlein decomposition is the Fourier dual of the Lebesgue decomposition into pure point and continuous components \cite{ARMA,MoSt}.

\smallskip

We complete the section by reviewing the Eberlein convolution.

\begin{theorem} \cite{EBE,EBE2} If $f,g \in WAP(G)$ then
\begin{displaymath}
f \circledast g(t) = M_x ( f(t-x)g(x)) \,,
\end{displaymath}
is well defined and belongs to $SAP(G)$.

We will call $f \circledast g$ the \textbf{Eberlein convolution of $f$ and $g$}.
\end{theorem}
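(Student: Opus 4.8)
The plan is to treat the well-definedness of $f\circledast g$ and the assertion $f\circledast g\in SAP(G)$ separately, and to reduce the second assertion to the case of trigonometric polynomials by means of the Eberlein decomposition. First I would check well-definedness: fix $t\in G$. Since $WAP(G)$ is closed under reflection, translation and multiplication, the function $x\mapsto f(t-x)g(x)$ lies in $WAP(G)$, so its mean $M_x\big(f(t-x)g(x)\big)$ exists and $f\circledast g$ is a well-defined function on $G$. The elementary inequalities $|M(h)|\le M(|h|)\le\|h\|_\infty$, valid because $M$ is a limit of averages, give $\|f\circledast g\|_\infty\le\|f\|_\infty\|g\|_\infty$. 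For uniform continuity, substituting $x\mapsto x+s$ inside the mean yields $f\circledast g(t+s)-f\circledast g(t)=M_x\big(f(t-x)(g(x+s)-g(x))\big)$, hence $\|(f\circledast g)(\cdot+s)-f\circledast g\|_\infty\le\|f\|_\infty\,\|g(\cdot+s)-g\|_\infty\to 0$ as $s$ tends to the identity of $G$, because $g\in\Cu(G)$. Thus $f\circledast g\in\Cu(G)$. Throughout I use translation- and reflection-invariance of the mean, both consequences of its independence of the F\"olner sequence.

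Second, I would annihilate the null components. If $h\in WAP_0(G)$ and $k\in WAP(G)$, then for every $t$
\[
|h\circledast k(t)|\le M_x\big(|h(t-x)|\,|k(x)|\big)\le\|k\|_\infty\,M_x(|h(t-x)|)=\|k\|_\infty\,M(|h|)=0,
\]
and symmetrically $k\circledast h=0$. Writing the Eberlein decompositions $f=f_{\mathsf s}+f_0$ and $g=g_{\mathsf s}+g_0$ and expanding by bilinearity of $\circledast$ (immediate from linearity of $M$), every mixed term vanishes and $f\circledast g=f_{\mathsf s}\circledast g_{\mathsf s}$.

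It then remains to show $f_{\mathsf s}\circledast g_{\mathsf s}\in SAP(G)$. Here I would invoke the classical fact that $SAP(G)$ is the $\|\cdot\|_\infty$-closure of the linear span of the characters of $G$. For $\chi,\psi\in\widehat{G}$ a direct computation gives $\chi\circledast\psi=\chi$ when $\chi=\psi$ and $\chi\circledast\psi=0$ otherwise, using $M(\chi)=1$ for the trivial character and $M(\chi)=0$ for every nontrivial one; by bilinearity $\circledast$ therefore carries pairs of trigonometric polynomials to trigonometric polynomials. Choosing trigonometric polynomials $P_n\to f_{\mathsf s}$ and $Q_n\to g_{\mathsf s}$ uniformly, the bound $\|h\circledast k\|_\infty\le\|h\|_\infty\|k\|_\infty$ together with bilinearity gives
\[
\|f_{\mathsf s}\circledast g_{\mathsf s}-P_n\circledast Q_n\|_\infty\le\|f_{\mathsf s}\|_\infty\,\|g_{\mathsf s}-Q_n\|_\infty+\|Q_n\|_\infty\,\|f_{\mathsf s}-P_n\|_\infty\longrightarrow 0,
\]
so $f_{\mathsf s}\circledast g_{\mathsf s}$ is a uniform limit of trigonometric polynomials and hence lies in $SAP(G)$; therefore so does $f\circledast g$.

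The main obstacle is the well-definedness step, specifically the input that a product of two weakly almost periodic functions is weakly almost periodic (a genuine theorem of Eberlein, not a formal manipulation), together with the two structural facts about $SAP(G)$ used at the end (that it is the closed span of characters and that nontrivial characters have vanishing mean). These are classical, but they are the substantive ingredients; everything else is bookkeeping with the linearity, monotonicity and invariance of the mean. An alternative to the last paragraph that avoids the trigonometric-polynomial structure theory is to use directly that $h\in SAP(G)$ iff its $\varepsilon$-almost periods are relatively dense for every $\varepsilon>0$, applied to $g_{\mathsf s}$ through the estimate $\|(f_{\mathsf s}\circledast g_{\mathsf s})(\cdot+s)-f_{\mathsf s}\circledast g_{\mathsf s}\|_\infty\le\|f_{\mathsf s}\|_\infty\,\|g_{\mathsf s}(\cdot+s)-g_{\mathsf s}\|_\infty$.
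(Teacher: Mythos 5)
The paper itself offers no proof of this statement --- it is quoted as background from Eberlein \cite{EBE,EBE2} --- so your argument can only be compared with the classical one. Your proof is correct: for fixed $t$ the function $x\mapsto f(t-x)g(x)$ is weakly almost periodic by the closure of $WAP(G)$ under translation, reflection and products (the last being the genuinely nontrivial input, as you note), so the mean exists; the bound $\|h\circledast k\|_\infty\le\|h\|_\infty\|k\|_\infty$, the translation/reflection invariance and monotonicity of the mean, the annihilation of $WAP_0(G)$ factors, the orthogonality computation $\chi\circledast\psi=\delta_{\chi,\psi}\,\chi$, and the approximation estimate are all sound, and the reduction $f\circledast g=f_{\mathsf s}\circledast g_{\mathsf s}$ via the Eberlein decomposition is legitimate since the paper states that decomposition as a prior theorem. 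The route is, however, genuinely different from Eberlein's: his proof establishes $f\circledast g\in SAP(G)$ directly from weak compactness, essentially by showing that $h\mapsto h\circledast g$ maps the weakly compact orbit closure of $f$ continuously into $(\Cu(G),\|\cdot\|_\infty)$, so that the norm-orbit of $f\circledast g$ is compact; no decomposition and no character theory are needed. Your argument is shorter and more transparent, but it leans on two pieces of machinery that are at least as deep as the statement itself: the decomposition $WAP(G)=SAP(G)\oplus WAP_0(G)$ (which, in the historical development, comes \emph{after} the convolution theorem, so as a from-scratch proof your order of deduction is inverted, though within this paper's logical presentation it is unobjectionable) and the identification of $SAP(G)$ with the uniform closure of the span of characters, i.e.\ the Bochner-definition--Bohr-compactification equivalence plus Stone--Weierstrass. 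The direct weak-compactness proof buys independence from both of these; yours buys brevity and an explicit formula-level picture of what $\circledast$ does to Fourier--Bohr data, which is in fact the aspect this paper exploits later.
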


\begin{theorem} \cite{ARMA} If $f \in SAP(G)$ and $\mu \in \WAP(G)$ then
\begin{displaymath}
f \circledast \mu(t) = M ( f(t- \cdot ) \mu) \,,
\end{displaymath}
is well defined and belongs to $\SAP(G)$.

We will call $f \circledast \mu$ the \textbf{Eberlein convolution of $f$ and $\mu$}.
\end{theorem}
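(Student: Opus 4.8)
The plan is to reduce everything to the case where $f$ is a continuous character of $G$, and then to extend by linearity and uniform approximation, using the classical fact that $\SAP(G)$ is the $\|\cdot\|_\infty$-closure of the linear span of the continuous characters of $G$. \emph{Step 1 (well-definedness).} First I would check that $g\nu\in\WAP(G)$ whenever $g\in\SAP(G)$ and $\nu\in\WAP(G)$; note that $g\nu$ is translation bounded because $|g\nu|\le\|g\|_\infty|\nu|$. For $g=\chi$ a continuous character this follows from the pointwise identity
\begin{displaymath}
(\chi\nu)*\varphi=\chi\cdot\bigl(\nu*(\overline{\chi}\varphi)\bigr),\qquad \varphi\in\Cc(G),
\end{displaymath}
whose right-hand side is a product of two weakly almost periodic functions, hence weakly almost periodic ($\chi\in\SAP(G)\subseteq\WAP(G)$, and $\nu*(\overline\chi\varphi)\in\WAP(G)$ since $\overline\chi\varphi\in\Cc(G)$); as $\varphi$ is arbitrary, $\chi\nu\in\WAP(G)$. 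Linearity extends this to all trigonometric polynomials, and a density argument finishes the general case: if $p_n\to g$ uniformly with $p_n$ trigonometric polynomials, then $\|((g-p_n)\nu)*\varphi\|_\infty\le\|g-p_n\|_\infty\,\|\nu\|_{-\supp(\varphi)}\,\|\varphi\|_\infty\to 0$, so $(g\nu)*\varphi$ is a uniform limit of weakly almost periodic functions and therefore weakly almost periodic. Applying this with $\nu=\mu$ and $g=f(t-\cdot)\in\SAP(G)$ shows $f(t-\cdot)\mu\in\WAP(G)$, so the mean $M(f(t-\cdot)\mu)$ exists and $f\circledast\mu$ is well defined.

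\emph{Step 2 (an a priori bound).} Next I would show that $f\mapsto f\circledast\mu$ is bounded on $(\SAP(G),\|\cdot\|_\infty)$. Fixing $\varphi\in\Cc(G)$ with $\varphi\ge 0$ and $\int_G\varphi\,dt=1$, and setting $K_0:=-\supp(\varphi)$, one has for $\nu\in\WAP(G)$ that $M(\nu)=M(\nu*\varphi)$, $|M(\nu*\varphi)|\le\|\nu*\varphi\|_\infty$, and $\|\nu*\varphi\|_\infty\le\|\varphi\|_\infty\,\|\nu\|_{K_0}$. Taking $\nu=f(t-\cdot)\mu$ and using $\|f(t-\cdot)\mu\|_{K_0}\le\|f\|_\infty\,\|\mu\|_{K_0}$ yields $\|f\circledast\mu\|_\infty\le\|\varphi\|_\infty\,\|\mu\|_{K_0}\,\|f\|_\infty$, with the constant independent of $f$.

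\emph{Step 3 (characters and conclusion).} For $f=\chi$ a continuous character, $\chi(t-y)=\chi(t)\overline{\chi(y)}$ gives
\begin{displaymath}
\chi\circledast\mu(t)=M\bigl(\chi(t-\cdot)\mu\bigr)=\chi(t)\,M(\overline{\chi}\mu),
\end{displaymath}
a scalar multiple of $\chi$, hence an element of $\SAP(G)$; by linearity $p\circledast\mu\in\SAP(G)$ for every trigonometric polynomial $p$. For general $f\in\SAP(G)$, pick trigonometric polynomials $p_n\to f$ uniformly; by the bound of Step 2, $p_n\circledast\mu\to f\circledast\mu$ uniformly, and since $\SAP(G)$ is closed in $(\Cu(G),\|\cdot\|_\infty)$ we conclude $f\circledast\mu\in\SAP(G)$.

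I expect the main obstacle to be the well-definedness step, namely verifying that the inclusion $\SAP(G)\cdot\WAP(G)\subseteq\WAP(G)$ — standard for functions — persists when $\nu$ is a translation bounded \emph{measure}, and justifying the attendant limiting argument; everything afterwards is a bounded-linear-extension argument pinned to the explicit formula $\chi\circledast\mu=M(\overline{\chi}\mu)\,\chi$. An alternative would be to imitate Eberlein's treatment of the function--function case by testing $\mu$ against an approximate identity, but the route through multiplication by characters appears the most economical.
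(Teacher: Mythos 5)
The paper does not actually prove this statement; it is quoted verbatim from \cite{ARMA} as background, so there is no internal argument to compare yours against. Judged on its own, your proof is correct. The multiplication identity $(\chi\nu)*\varphi=\chi\cdot\bigl(\nu*(\overline{\chi}\varphi)\bigr)$ checks out, and together with the facts recorded in the paper (closure of $WAP(G)$ under products and under uniform limits, the existence of the mean for measures in $\WAP(G)$ and the formula $M(\nu*\varphi)=M(\nu)\int_G\varphi$) it does give $g\nu\in\WAP(G)$ for $g\in SAP(G)$, hence well-definedness; the a priori bound $\|f\circledast\mu\|_\infty\le\|\varphi\|_\infty\|\mu\|_{-\supp(\varphi)}\|f\|_\infty$ and the explicit formula $\chi\circledast\mu=M(\overline{\chi}\mu)\,\chi$ then make the extension from trigonometric polynomials to all of $SAP(G)$ routine. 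This is a genuinely different route from the original treatment in \cite{ARMA}, which works with the mean and approximate identities for $(SAP(G),\circledast)$ (equivalently, passes through the Bohr compactification) rather than reducing to characters; your approach buys a short, explicit argument at the cost of invoking the Bohr--von Neumann approximation theorem (density of the span of continuous characters in $SAP(G)$), which the paper never states but which is classical, while the memoir's approach avoids that theorem and yields additional structural information (e.g.\ compatibility of $\circledast$ with the mean and with the Fourier--Bohr coefficients) used elsewhere. Two small points to tidy up: the theorem's conclusion is membership in the measure space $\SAP(G)$, so you should add the one-line remark that an $SAP$ function, viewed as a density against Haar measure, is an $\SAP(G)$ measure (since $\varphi*(h\theta_G)=\varphi*h\in SAP(G)$ for $\varphi\in\Cc(G)$); and in Step 1 you should say explicitly that $y\mapsto f(t-y)$ lies in $SAP(G)$ because $SAP(G)$ is closed under translation and reflection, which the paper does record.
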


Recently, the notion of Eberlein convolution was extended to two weakly almost periodic measures in \cite{LS2}.

\smallskip

Finally, we review the notion of approximate identity for the Eberlein convolution.

\begin{definition} A net $\{ f_\alpha \}$ with $f_\alpha$ is \textbf{an approximate identity for $(SAP(G), \circledast)$} if for all $f \in SAP(G)$ we have
\begin{displaymath}
f= \lim_\alpha f \circledast f_\alpha
\end{displaymath}
in $(SAP(G), \| \, \|_\infty)$.
\end{definition}

\begin{remark}
\begin{itemize}
\item[(i)] Consider the natural embedding $G \hookrightarrow G_{\mathsf{b}}$ of $G$ into its Bohr compactification.

Then $f_\alpha$ is an approximate identity for $(SAP(G), \circledast)$ if and only if there exists an approximate identity $g_\alpha$ for $(C(G_{\mathsf{b}}), *)$ such that $f_\alpha$ is the restriction to $G$ of $g_\alpha$ \cite{EBE,ARMA,MoSt}. Moreover
\begin{displaymath}
M(f_\alpha)=\int_{G_{\mathsf{b}}} g_{\mathsf{b}}(s) d \theta_{G_{\mathsf{b}}}(s) \,.
\end{displaymath}
In particular, approximate identities for $(SAP(G), \circledast)$ exist, and can be chosen such that $f_\alpha \geq 0, f_\alpha(-x)=f_\alpha(x)$ and $M(f_\alpha)=1$.

\item[(ii)] If $f_\alpha$ is an approximate identity for $(SAP(G), \circledast)$ then
\begin{displaymath}
\lim_\alpha M(f_\alpha)=1 \,.
\end{displaymath}
\end{itemize}
\end{remark}

\section{Weakly admissible Measures}

In this section we introduce a new concept for a measure, which we will call weakly admissible (compare to the definition of admissible measures \cite{lin}), and study the basic properties of weakly admissible measures.

The definition of weak admissibility is simply the integrability condition from the definition of Fourier transformability, and its importance to the Fourier theory for measures is emphasized by \cite[Thm.~3.10]{CRS2}(Theorem~\ref{thm: twice FT} below).

\begin{definition}\label{defi:admiss} A measure $\mu \in \cM(G)$ is called \textbf{weakly admissible} if we have
\begin{displaymath}
\reallywidehat{K_2(\widehat{G})} \subset L^1(|\mu|) \,.
\end{displaymath}
\end{definition}

Note that $\mu \in \cM(G)$ is weakly admissible if and only if
\begin{displaymath}
\reallywidehat{\Cc(\widehat{G})} \subset L^2(|\mu|) \,.
\end{displaymath}
\smallskip

We start by stating a simple lemma which contains few straightforward properties of weakly admissible measures.

\begin{lemma}\label{lemma-basic prop}
\begin{itemize}
\item[(i)] $\mu$ is weakly admissible if and only if $|\mu|$ is weakly admissible.
\item[(ii)] If $\mu$ is weakly admissible and $|\nu| \leq |\mu|$ then $\nu$ is weakly admissible.
\item[(iii)] $\mu$ is weakly admissible if and only if $\mu_{\mathsf{pp}}, \mu_{ac}$ and $\mu_{sc}$ are weakly admissible.
\item[(iv)] If $\mu$ is Fourier transformable, then $\widehat{\mu}$ is weakly admissible.
\item[(v)] If $\mu$ is weakly admissible then $\overline{\mu}, \widetilde{\mu}, \mu^\dagger$ and $T_t \mu$ are weakly admissible.
\item[(vi)] If $\mu$ is weakly admissible and $f \in \Cu(G)$ then $f \mu$ is weakly admissible.
\end{itemize}
\end{lemma}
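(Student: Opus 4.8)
The plan is to verify the six items in turn, all directly from the definition $\reallywidehat{K_2(\widehat G)} \subset L^1(|\mu|)$, or equivalently $\reallywidehat{\Cc(\widehat G)} \subset L^2(|\mu|)$, and from standard facts about the variation measure. Items (i) and (ii) are immediate: since $|\,|\mu|\,| = |\mu|$, the condition $\reallywidehat{K_2(\widehat G)} \subset L^1(|\mu|)$ is literally the same statement for $\mu$ and for $|\mu|$, giving (i); and if $|\nu| \leq |\mu|$ then $L^1(|\mu|) \subset L^1(|\nu|)$, so the inclusion for $\mu$ forces the inclusion for $\nu$, giving (ii).

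For (iii) I would use that $|\mu| = |\mu_{\mathsf{pp}}| + |\mu_{ac}| + |\mu_{sc}|$, since the pure point, absolutely continuous and singular continuous parts of $\mu$ are mutually singular; hence a Borel function lies in $L^1(|\mu|)$ if and only if it lies in each of $L^1(|\mu_{\mathsf{pp}}|)$, $L^1(|\mu_{ac}|)$, $L^1(|\mu_{sc}|)$, which gives the equivalence. Item (iv) is nothing more than Remark~(i) after the definition of Fourier transformability: if $\mu$ is Fourier transformable then $\widehat{\Kt(G)} \subset L^1(|\widehat\mu|)$ by definition, and reading this on $\widehat G$ (with $\widehat{\widehat G}$ identified with $G$ via Pontryagin duality, so that $\reallywidehat{K_2(\widehat{\widehat G})} = \reallywidehat{K_2(G)}$ up to the canonical reflection) is exactly the statement that $\widehat\mu$ is weakly admissible. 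For (v), I note that $|\overline\mu| = |\mu|$, $|\widetilde\mu| = (|\mu|)^\dagger$, $|\mu^\dagger| = (|\mu|)^\dagger$, and $|T_t\mu| = T_t|\mu|$; then I use that $K_2(\widehat G)$ and hence $\reallywidehat{K_2(\widehat G)}$ is stable under complex conjugation, reflection, and translation (up to modulation by a character, which does not affect membership in $L^1$ since characters have modulus $1$), so the defining inclusion transports along each of these operations.

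Item (vi) is the only one requiring a genuine, if short, argument: if $\mu$ is weakly admissible and $f \in \Cu(G)$, then $|f\mu| = |f|\,|\mu| \leq \|f\|_\infty |\mu|$, so for any $g \in \reallywidehat{K_2(\widehat G)}$ we have $\int_G |g|\, d|f\mu| \leq \|f\|_\infty \int_G |g|\, d|\mu| < \infty$, whence $\reallywidehat{K_2(\widehat G)} \subset L^1(|f\mu|)$; in fact (vi) is just a special case of (ii) applied to $\nu = f\mu$. I expect no real obstacle here: the main thing to be careful about is the bookkeeping of how the variation measure interacts with conjugation, reflection and translation in (v), and the correct Pontryagin-duality identification in (iv) so that $\reallywidehat{\phantom{K}}$ on $\widehat{\widehat G}$ is read back as an object on $G$ with the appropriate reflection. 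All of these are routine once stated carefully.
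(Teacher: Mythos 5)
Your proof is correct and takes essentially the same route as the paper, which treats all six items as direct consequences of the definition and of standard variation-measure facts; in particular the paper's own argument for (iii) uses exactly your identity $|\mu| = |\mu_{\mathsf{pp}}| + |\mu_{ac}| + |\mu_{sc}|$ together with (ii), and it dismisses (i), (iv), (v) and (vi) as obvious. Your additional bookkeeping in (iv) (the Pontryagin identification up to reflection) and in (v) (how the variation measure behaves under conjugation, reflection and translation) merely fills in details the paper leaves unstated.
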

\begin{proof}

(i) is obvious by the definition of weak admissibility.

(ii) If $f \in \Cc(\widehat{G})$ then $\left|\widehat{f}\right|^2$ is continuous, hence measurable. Moreover
\begin{displaymath}
\int_{G} \left|\widehat{f}\right|^2 d |\nu| \leq \int_{G} \left|\widehat{f}\right|^2 d |\mu| < \infty \,.
\end{displaymath}
This shows that $\reallywidehat{\Cc(\widehat{G})} \subset L^2(|\nu|)$.

(iii) Follows immediately from
\begin{displaymath}
\left| \mu \right| =\left| \mu_{\mathsf{pp}} \right|+ \left| \mu_{ac} \right|+\left| \mu_{sc} \right|
\end{displaymath}
and (ii).

(iv) is a consequence of the definition of the Fourier trasnformability.

(v) and (vi) are obvious.

\end{proof}

\bigskip

Next, we show that if $\mu$ is a weakly admissible measure and $f \in \Cc(\hG)$ then $\left| \widehat{f} \right|^2*\mu$ defines an uniformly continuous and bounded function. This result is essential for the proof of Theorem~\ref{thm: sap as FT}. The proof of the Theorem~\ref{thm: admis implies strong admis} below follows the idea of \cite[Thm.~2.5]{ARMA1}(see also \cite[Thm.~9.18]{MoSt}, \cite[Lemma]{RobTho}).

\begin{theorem}\label{thm: admis implies strong admis} Let $\mu$ be a weakly admissible measure. Then
\begin{itemize}
\item[(i)] For each $K \subset \widehat{G}$ there exists a constant $C_K$ so that for all $f \in \Cc(\widehat{G})$ we have
\begin{displaymath}
\sqrt{\int_G \left| \widehat{f} \right|^2 d |\mu|} \leq C_K \| f \|_\infty \,.
\end{displaymath}
\item[(ii)] For each $f \in \Cc(\widehat{G})$ the function
\begin{displaymath}
t \to \int_{G} \left| \widehat{f} (x+t) \right|^2 d |\mu|(x) =: g(t)
\end{displaymath}
belongs to $\Cu(G)$.
\item[(iii)] For each $f \in \Cc(\widehat{G)}$ the function
\begin{displaymath}
t \to \int_{G} \left| \widehat{f} (x+t) \right|^2 d \mu(x)=: h(t)
\end{displaymath}
belongs to $\Cu(G)$.
\item[(iv)] $\mu$ is translation bounded.
\end{itemize}
\end{theorem}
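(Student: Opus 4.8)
The plan is to establish the four parts in the stated order, since each feeds the next, and the whole argument is an adaptation of the Argabright--deLamadrid uniform-boundedness trick.

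For part (i), the goal is a uniform estimate $\sqrt{\int_G |\widehat f|^2\, d|\mu|}\le C_K\|f\|_\infty$ for $f\in\Cc(\widehat G)$ with $\supp(f)\subset K$. I would argue by contradiction using the Baire category / uniform boundedness principle, exactly as in \cite[Thm.~2.5]{ARMA1}. Fix $K$ with nonempty interior. Consider the linear map $\Cc_K(\widehat G)\to L^2(|\mu|)$, $f\mapsto \widehat f$, which is well defined by weak admissibility (recall $\mu$ weakly admissible iff $\reallywidehat{\Cc(\widehat G)}\subset L^2(|\mu|)$). Equip $\Cc_K(\widehat G)$ with $\|\cdot\|_\infty$; this is not complete, so one cannot invoke the closed graph theorem directly, and this is where I expect the main technical obstacle to lie. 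The standard workaround is to write $G$ (or the relevant subset) as a countable union of compacts, push the problem onto each, and use that $\{f:\int_G|\widehat f|^2\,d|\mu|\le n\|f\|_\infty^2\}$ is closed in $(\Cc_K,\|\cdot\|_\infty)$ and these sets cover $\Cc_K$; a Baire argument on the unit ball then yields a bound on a ball around some $f_0$, and a symmetrisation/translation argument (using $|\mu|$ weakly admissible under reflection, Lemma~\ref{lemma-basic prop}(v)) upgrades it to a bound at $0$, giving the constant $C_K$. Alternatively one may first prove (ii)--(iii) for the \emph{pointwise} finiteness and then extract uniformity; but following \cite{ARMA1} directly is cleanest.

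For part (ii), note $g(t)=\int_G|\widehat f(x+t)|^2\,d|\mu|(x)=(|\widehat f|^2\ast|\mu|^\dagger)(t)$ after the usual change of variables, where $|\widehat f|^2$ is a nonnegative continuous $L^1$ function on $G$ (since $f\in\Cc$ implies $\widehat f\in\Cz$ and $|\widehat f|^2\in L^1$ by Plancherel-type reasoning, or directly $\widehat f=\widehat g$ for appropriate $g$). Boundedness of $g$ is immediate from part (i): $g(t)=\int_G|\widehat{T_{-t}^\ast\text{-rotate of }f}|^2$, more precisely $|\widehat f(x+t)|^2=|\widehat{(\chi_t f)}(x)|^2$ where $\chi_t$ is the character corresponding to $t$, and $\|\chi_t f\|_\infty=\|f\|_\infty$, so $g(t)\le C_K^2\|f\|_\infty^2$ uniformly. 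Uniform continuity follows from uniform continuity of $x\mapsto|\widehat f|^2$ together with the uniform bound, via the standard estimate $|g(t)-g(s)|\le \|\,|\widehat f|^2 - T_{s-t}|\widehat f|^2\,\|_\infty\cdot\|\mu\|_{K'}$ for a suitable compact $K'$ coming from near-support of $|\widehat f|^2$, combined with an $\varepsilon$-tail argument since $\widehat f\in\Cz$. I would spell out the tail argument: split the integral over a large compact set plus remainder, control the remainder by (i) applied to a cutoff, and use uniform continuity of $|\widehat f|^2$ on the compact piece.

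Part (iii) follows from (ii) by the bound $|h(t)|\le g(t)$ and $|h(t)-h(s)|\le \int_G \big||\widehat f(x+t)|^2-|\widehat f(x+s)|^2\big|\,d|\mu|(x)$, which is controlled exactly as in (ii) with $|\mu|$ in place of $\mu$; so $h$ inherits uniform continuity and boundedness. Finally, for part (iv): choose a single $f\in\Cc(\widehat G)$ with $\widehat f$ not vanishing on a neighbourhood of $0$ — e.g.\ $f=g\ast\widetilde g$ so that $\widehat f=|\widehat g|^2\ge 0$ and $\widehat f(0)>0$, hence $\widehat f\ge\delta>0$ on some open $U\ni 0$. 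Then for any $x_0\in G$,
\[
\delta^2\,|\mu|(x_0+U^\dagger)\le \int_{x_0+U^\dagger}|\widehat f(-(x-x_0))|^2\,d|\mu|(x)\le g(x_0)\le C_K^2\|f\|_\infty^2,
\]
using the appropriate sign/reflection bookkeeping; taking the sup over $x_0$ shows $\|\mu\|_{\overline{U^\dagger}}<\infty$, and by the remark that translation boundedness on one compact set with nonempty interior suffices \cite{BM}, $\mu\in\cM^\infty(G)$. The principal difficulty remains the Baire-category bootstrap in (i); everything after that is routine $\varepsilon$-management with the $\Cz$ decay of $\widehat f$.
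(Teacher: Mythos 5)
The one genuine gap is in your proof of uniform continuity in (ii). The estimate $|g(t)-g(s)|\le \bigl\| \,|\widehat f|^2-T_{s-t}|\widehat f|^2\,\bigr\|_\infty\,\|\mu\|_{K'}$ presupposes that $\mu$ is translation bounded, which is precisely item (iv); since (iv) is deduced from the boundedness part of (ii) (in the paper and in your own sketch), invoking it here is circular. The proposed repair --- splitting $G$ into a large compact $J$ plus a remainder and controlling $\int_{G\setminus J}|\widehat f(x+t)|^2\,d|\mu|(x)$ ``by (i) applied to a cutoff'' --- does not work either: $\widehat f$ is only in $\Cz(G)$, (i) gives a global bound $C_K\|\cdot\|_\infty$ rather than a tail bound, and the tail cannot be made small uniformly in $t$ (already for $\mu$ Lebesgue on $\RR$ and fixed $J$ it tends to $\|\widehat f\|_{L^2(\RR)}^2$ as $t\to\infty$). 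The missing idea --- which you do use for boundedness but not for continuity --- is to convert translation of $\widehat f$ into modulation of $f$: with $\phi_s$ the character of $\widehat G$ defined by $s\in G$ one has $T_{-t}\widehat f=\widehat{\phi_{-t}f}$, so the reverse triangle inequality in $L^2(|\mu|)$ together with (i) gives
\begin{displaymath}
\left|\sqrt{g(t)}-\sqrt{g(s)}\right|\;\le\; C_K\,\|(1-\phi_{t-s})f\|_\infty\,,
\end{displaymath}
and Pontryagin duality (the neighbourhood $N(K,\epsilon)$ of $0$ in $G$) makes the right-hand side small uniformly in $s,t$ once $t-s$ is close to $0$; uniform continuity of $\sqrt g$, hence of $g$, follows with no reference to $\|\mu\|_{K'}$ or to any spatial cutoff. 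This is exactly how the paper argues.

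In (i) your premise that $\{f\in\Cc(\widehat G)\,:\,\supp(f)\subset K\}$ with $\|\cdot\|_\infty$ is incomplete is incorrect: it is a closed subspace of $(\Cz(\widehat G),\|\cdot\|_\infty)$, hence a Banach space, and your own Baire-category argument needs precisely this completeness (also, the step ``write $G$ as a countable union of compacts'' is neither available in a general LCAG nor needed, since weak admissibility already makes the closed sets $E_n=\{f\,:\,\int_G|\widehat f|^2\,d|\mu|\le n\|f\|_\infty^2\}$ cover the space, and the translation-to-the-origin upgrade needs no reflection symmetry of $\mu$). Once completeness is acknowledged, the paper simply applies the closed graph theorem to $f\mapsto\widehat f\in L^2(|\mu|)$, the closedness being checked on compact subsets of $G$ plus inner regularity --- the same argument you would need for closedness of $E_n$ --- so your Baire variant is workable but buys nothing. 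The rest is sound: your derivation of (iii) from $|h(t)-h(s)|\le\int_G\bigl|\,|\widehat f(x+t)|^2-|\widehat f(x+s)|^2\,\bigr|\,d|\mu|(x)$ is in fact a little cleaner than the paper's reduction via real and imaginary parts and Jordan decomposition, and your (iv) (a $\widehat f$ bounded below on a relatively compact neighbourhood of $0$, plus the one-compact-set criterion of \cite{BM}) matches the paper's argument and only uses the boundedness part of (ii).
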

\begin{proof}

(i) Let us start by recalling that $ L^2(|\mu|)$ is a Hilbert space with respect to the inner product
\begin{displaymath}
\langle f,g\rangle = \int_{G} f(x) \overline{g(x)} d |\mu|(x) \,.
\end{displaymath}
The norm induced by this inner product is
\begin{displaymath}
_\mu\| h \|_2:= \sqrt{ \int_{G} \left|h\right|^2 d |\mu|} \,.
\end{displaymath}
The definition of weak admissibility tells us that we can define a mapping
\begin{displaymath}
T: \Cc(\widehat{G}) \to L^2(|\mu|) \,;\, T(f)=\widehat{f} \,.
\end{displaymath}
It is obvious that $T$ is linear.

Now fix some compact set $K \subset \widehat{G}$ and define as usual
\begin{displaymath}
C(\widehat{G}:K):= \{ f \in \Cc(\widehat{G}) | \supp(f) \subset K \} \,.
\end{displaymath}
We claim that the restriction $T: C(\widehat{G}:K) \to L^2(|\mu|)$ has a closed graph and hence it is continuous.

Indeed, let $f_\alpha \to f$ in $(C(\widehat{G}:K), \| \, \|_\infty)$ be so that $\widehat{f_\alpha} \to g$ in  $L^2(|\mu|)$. We need to show that $f =g $ in $L^2(|\mu|)$.

Let $\epsilon>0$ and let $J \subset G$ be any compact set.

Since $\widehat{f_\alpha} \to g$ in  $L^2(|\mu|)$ there exists some $\beta$ so that for all $\alpha > \beta$ we have
\begin{displaymath}
\left( \int_{G} \left| \widehat{f_\alpha} - g \right|^2 d |\mu|\right)^{\frac{1}{2}} < \frac{\epsilon}{2} \,.
\end{displaymath}
Moreover, since $f_\alpha \to f$ in $(C(\widehat{G}:K), \| \, \|_\infty)$, there exists some $\gamma > \beta$ such that, for all $\alpha > \gamma$ we have
\begin{displaymath}
\| f_\alpha -f \|_\infty \leq \frac{\epsilon}{2 \theta_{\widehat{G}}(K) \sqrt{| \mu |(J)}+1} \,.
\end{displaymath}
Then, for some $\alpha >\gamma$ we have by the triangle inequality for $_\mu\| \, \|_2$
\begin{eqnarray*}
\left( \int_{J} \left| \widehat{f} - g \right|^2 d |\mu| \right)^{\frac{1}{2}} &\leq&  \left( \int_{J} \left| \widehat{f} - \widehat{f_\alpha} \right|^2 d |\mu| \right)^{\frac{1}{2}} + \left( \int_{J} \left| g - \widehat{f_\alpha} \right|^2 d |\mu| \right)^{\frac{1}{2}}  \\
   &\leq& \| \widehat{f} -\widehat{f_\alpha} \|_\infty \sqrt{| \mu |(J)}+\frac{\epsilon}{2} \\
   &=& \| f -f_\alpha \|_\infty  \theta_{\widehat{G}}(K)\sqrt{| \mu |(J)}+\frac{\epsilon}{2} \\
   &<& \epsilon \,.
\end{eqnarray*}

This shows that $\left( \int_{J} \left| \widehat{f} - g \right|^2 d |\mu| \right)^{\frac{1}{2}} < \epsilon $ for all compact sets $J \subset G$. Therefore, by the regularity of the measure $\left| \widehat{f} - g \right|^2  |\mu|$ we get
\begin{displaymath}
\left( \int_{G} \left| \widehat{f} - g \right|^2 d |\mu| \right)^{\frac{1}{2}} < \epsilon \,.
\end{displaymath}
Since $\epsilon >0$ was arbitrary, we get
\begin{displaymath}
\left( \int_{G} \left| \widehat{f} - g \right|^2 d |\mu| \right)^{\frac{1}{2}} =0 \,,
\end{displaymath}
which proves that $\widehat{f} = g$ in $L^2(|\mu|)$. Therefore, the graph of $T$ is closed and hence $T$ is continuous.

The continuity of $T$ implies the existence of $C_K$.

(ii) Fix some $K \subset \widehat{G}$ compact set so that $\supp(f) \subset K$. For the remaining of (ii), $f$ and $K$ are fixed.

For each $s \in G$ we will denote by $\phi_s$ the character on $\widehat{G}$ defined by $s$, that is
\begin{displaymath}
\phi_s(\chi):= \chi(s) \,.
\end{displaymath}
Then for all $t \in G$ we have
\begin{eqnarray*}
\int_{G} \left| \widehat{f} (x+t) \right|^2 d |\mu|(x)&=& \int_{G} \left| T_{-t} \widehat{f} (x) \right|^2 d |\mu|(x)= \int_{G} \left| \widehat{\phi_{-t} f} (x) \right|^2 d |\mu|(x) \\
&\leq& C_K \| \phi_{-t} f \|_\infty = C_K \|f \|_\infty \,.
\end{eqnarray*}
This shows that $g$ is bounded.

Next, let $\epsilon >0$. By Pontryagin duality, the set
\begin{displaymath}
N(K, \frac{\epsilon}{C_K \| f \|_\infty+1}) := \{ s \in G | \left| \psi_s( \chi) -1 \right| <\frac{\epsilon}{C_K \| f \|_\infty+1}\, \mbox{ for all } \chi \in K \}
\end{displaymath}
is an open neighbourhood of $0$ in $G$.

If $s-t \in N(K, \frac{\epsilon}{2})$, by the triangle inequality for $_\mu\| h \|_2$ we have
\begin{eqnarray*}
 \left| \sqrt{g(s)}-\sqrt{g(t)} \right| &=&\left|  _\mu\| T_{-t} \widehat{f} \|_2 -_\mu\| T_{-s} \widehat{f} \|_2 \right|\\
 &\leq& _\mu\| T_{-t} \widehat{f} - T_{-s} \widehat{f} \|_2= _\mu\|  \widehat{\phi_{-t}f} - \widehat{\phi_{-s}f} \|_2\\
 &=& _\mu\|  \reallywidehat{\phi_{-t}f- \phi_{-s}f} \|_2 \leq C_K \| \phi_{-t}f- \phi_{-s}f \|_\infty \\
 &=& C_K \| \phi_{-t}\left( 1- \phi_{t-s}\right)f \|_\infty = C_K \| \left( 1- \phi_{t-s}\right)f \|_\infty \\
 &\leq& C_K \frac{\epsilon}{C_K \| f \|_\infty+1}\| f \|_\infty < \epsilon \,.
\end{eqnarray*}
This proves that $\sqrt{g(t)}$ is uniformly continuous. Therefore, as $0 \leq g(t) \leq C_K \| f \|_\infty$, and as $x^2$ is uniformly continuous on the compact set $[ 0, \sqrt{C_K \| f \|_\infty}]$, it follows that $g$ is uniformly continuous.

(iii) Consider the decomposition
\begin{displaymath}
\mu=\mbox{Re}(\mu)+i \mbox{Im}(\mu) \,.
\end{displaymath}
of $\mu$.

Since
\begin{displaymath}
\mbox{Re}(\mu)=\frac{1}{2} (\mu+\bar{\mu}) \,,
\end{displaymath}
have
\begin{displaymath}
|\mbox{Re}(\mu)| \leq \frac{1}{2} (|\mu|+|\bar{\mu}|)=|\mu| \,.
\end{displaymath}
Same way we get
\begin{displaymath}
|\mbox{Im}(\mu)| \leq \frac{1}{2} (|\mu|+|\bar{\mu}|)=|\mu| \,.
\end{displaymath}
Therefore, by Lemma \ref{lemma-basic prop} (ii), the measures $\mbox{Re}(\mu)$ and $\mbox{Im}(\mu)$ are weakly admissible.

Next, consider the Jordan decomposition
\begin{displaymath}
\mbox{Re}(\mu)=\mbox{Re}(\mu)_+-\mbox{Re}(\mu)_- \,.
\end{displaymath}
It follows from the properties of Jordan decomposition that
\begin{displaymath}
\left| \mbox{Re}(\mu)_{\pm}\right| \leq \left| \mbox{Re}(\mu)\right|  \,.
\end{displaymath}
This shows that $ \mbox{Re}(\mu)_{\pm}$ are weakly admissible measures, and hence by (ii) the functions
\begin{displaymath}
t \to \int_{G} \left| \widehat{f} (x+t) \right|^2 d |\mbox{Re}(\mu)_{\pm}|(x)
\end{displaymath}
belong to $\Cu(G)$. As $\mbox{Re}(\mu)_{\pm} \geq 0$, we get that
\begin{displaymath}
t \to \int_{G} \left| \widehat{f} (x+t) \right|^2 d \mbox{Re}(\mu)_{\pm}(x)
\end{displaymath}
belong to $\Cu(G)$, and hence so does their difference
\begin{displaymath}
t \to \int_{G} \left| \widehat{f} (x+t) \right|^2 d \mbox{Re}(\mu)(x)\,.
\end{displaymath}
Exactly the same way, the function
\begin{displaymath}
t \to \int_{G} \left| \widehat{f} (x+t) \right|^2 d \mbox{Im}(\mu)(x)
\end{displaymath}
belongs to $\Cu(G)$.

Now, the equality
\begin{displaymath}
\int_{G} \left| \widehat{f} (x+t) \right|^2 d \mu(x)=  \int_{G} \left| \widehat{f} (x+t) \right|^2 d \mbox{Re}(\mu)(x)+i \int_{G} \left| \widehat{f} (x+t) \right|^2 d \mbox{Im}(\mu)(x)
\end{displaymath}
proves the claim.

(iv) Let $K \subset G$ be compact. Then there exists some $h \in \Cc(\widehat{G})$ so that \cite{BF,MoSt}
\begin{displaymath}
\widehat{h} \geq 1_{K} \,.
\end{displaymath}
Then, for all $x \in G$ we have
\begin{displaymath}
|\mu|(-x+K) =\int_{G} 1_{K}(x+t) d \left| \mu \right|(t) \leq  \int_{G} \left| \widehat{h} (x+t) \right|^2 d |\mu|(x) \,.
\end{displaymath}
Therefore, by (ii)
\begin{displaymath}
\| \mu \|_{K} = \sup_{x \in G} \{ |\mu|(-x+K)  \} <\infty \,.
\end{displaymath}
\end{proof}

A natural question to ask now is if translation boundedness implies weakly admissible. We will show in the next section that for $G=\RR^d$ the answer is yes, but in general the question is still open to our knowledge.

\bigskip

Next, we show that for weakly almost periodic function, weak admissibility is compatible with Eberlein decomposition.

\begin{theorem}\label{them:admis Eber decomp} Let $\mu \in \WAP$. Then, $\mu$ is weak admissible if and only if $\mu_{\mathsf{s}}$ and $\mu_0$ are weak admissible.
\end{theorem}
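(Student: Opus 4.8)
The plan is to reduce the statement to the known compatibility of Eberlein decomposition with the structures already available, using the fact (Theorem~\ref{thm: admis implies strong admis}(iv)) that weak admissibility forces translation boundedness, so that all three measures $\mu,\mu_{\mathsf{s}},\mu_0$ live in $\cM^\infty(G)$ where the Eberlein decomposition is defined. One direction is essentially Lemma~\ref{lemma-basic prop}: if $\mu_{\mathsf{s}}$ and $\mu_0$ are weakly admissible, then for every $f\in\Cc(\widehat G)$ the function $|\widehat f|^2$ is integrable against $|\mu_{\mathsf{s}}|$ and against $|\mu_0|$; since $\mu=\mu_{\mathsf{s}}+\mu_0$ gives $|\mu|\le|\mu_{\mathsf{s}}|+|\mu_0|$, we get $|\widehat f|^2\in L^1(|\mu|)$, i.e. $\mu$ is weakly admissible. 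This is the easy direction and I would dispatch it in two lines.

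For the converse, assume $\mu\in\WAP(G)$ is weakly admissible. The idea is to realize the projection $\mu\mapsto\mu_{\mathsf{s}}$ via Eberlein convolution with an approximate identity for $(SAP(G),\circledast)$. Concretely, fix $f\in\Cc(\widehat G)$; I want to show $|\widehat f|^2\in L^1(|\mu_{\mathsf{s}}|)$. By Theorem~\ref{thm: admis implies strong admis}(iii) (applied to the weakly admissible measure $\mu$, and also to $|\mu|$ via (ii)), the function $h(t)=\int_G|\widehat f(x+t)|^2\,d\mu(x)$ lies in $\Cu(G)$, and similarly $g(t)=\int_G|\widehat f(x+t)|^2\,d|\mu|(x)\in\Cu(G)$; moreover $h=|\widehat f|^2*\mu$ and $g=|\widehat f|^2*|\mu|$ in the convolution sense. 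Since $\mu\in\WAP(G)$, the function $|\widehat f|^2*\mu$ is weakly almost periodic, and — this is the key point — the Eberlein decomposition of the measure $\mu$ is compatible with convolution by a fixed $L^1$-type function: $(|\widehat f|^2*\mu)_{\mathsf{s}}=|\widehat f|^2*\mu_{\mathsf{s}}$ and $(|\widehat f|^2*\mu)_0=|\widehat f|^2*\mu_0$, because convolution with a fixed function is continuous and commutes with the projection onto $SAP$ (this follows from \cite{ARMA,MoSt}; alternatively it is immediate from $\mu_{\mathsf{s}}=\mu\circledast f_\alpha$ in a suitable limit and the associativity of convolution with Eberlein convolution). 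I would then need the positivity bookkeeping: write $|\widehat f|^2*\mu_{\mathsf{s}}$ using the real/imaginary and Jordan decompositions of $\mu_{\mathsf{s}}$ exactly as in the proof of Theorem~\ref{thm: admis implies strong admis}(iii), reducing to the case $\mu_{\mathsf{s}}\ge0$, where $\int_G|\widehat f(x+t)|^2\,d\mu_{\mathsf{s}}(x)$ being a finite (continuous) function for some $t$ — in fact for $t=0$ — gives exactly $|\widehat f|^2\in L^1(\mu_{\mathsf{s}})=L^1(|\mu_{\mathsf{s}}|)$. Running this over all $f\in\Cc(\widehat G)$ gives $\reallywidehat{\Cc(\widehat G)}\subset L^2(|\mu_{\mathsf{s}}|)$, hence $\mu_{\mathsf{s}}$ is weakly admissible; then $\mu_0=\mu-\mu_{\mathsf{s}}$ is weakly admissible by Lemma~\ref{lemma-basic prop}(ii) and the first direction, or directly by the same argument.

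The step I expect to be the main obstacle is the compatibility claim $(|\widehat f|^2*\mu)_{\mathsf{s}}=|\widehat f|^2*\mu_{\mathsf{s}}$ together with the control of its variation: I need to know not only that the $SAP$-part of the WAP function $|\widehat f|^2*\mu$ comes from $\mu_{\mathsf{s}}$, but that the relevant integral against the positive measure $|\mu_{\mathsf{s}}|$ is finite, which requires passing through the positive/negative parts carefully since $*$ need not interact simply with $|\cdot|$. The cleanest route is probably to first establish, for a \emph{positive} weakly admissible $\mu\in\WAP(G)$, that $g=|\widehat f|^2*\mu\in SAP(G)$ has mean controlling $\int_G|\widehat f|^2\,d\mu$ via a van Hove averaging argument (using Lemma on the mean of WAP measures), deduce $\mu_{\mathsf{s}}$ is positive and weakly admissible, and only then reduce the general case by the four-term decomposition $\mu=\mathrm{Re}(\mu)_+-\mathrm{Re}(\mu)_-+i\,\mathrm{Im}(\mu)_+-i\,\mathrm{Im}(\mu)_-$, noting that each summand is in $\WAP(G)$ (as $\WAP$ is closed under the relevant operations) and that the Eberlein decomposition is linear. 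Once positivity is in hand everywhere, the inequality $|\mu|\le|\mu_{\mathsf{s}}|+|\mu_0|$ and Lemma~\ref{lemma-basic prop} close the argument.
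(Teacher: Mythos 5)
Your easy direction is fine and matches the paper, but the hard direction has a genuine gap. Your reduction to positive measures via the four-term decomposition $\mu=\mathrm{Re}(\mu)_+-\mathrm{Re}(\mu)_-+i\,\mathrm{Im}(\mu)_+-i\,\mathrm{Im}(\mu)_-$ requires each Jordan component to lie in $\WAP(G)$, so that it has an Eberlein decomposition and linearity of the decomposition can be invoked. That is not justified: $\WAP(G)$ is closed under conjugation, so $\mathrm{Re}(\mu),\mathrm{Im}(\mu)\in\WAP(G)$, but nothing in the paper or the cited results says that $\WAP(G)$ is closed under taking Jordan components (or variation), and this is not known to hold in general. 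In the proof of Theorem~\ref{thm: admis implies strong admis}(iii) the Jordan decomposition is used only for an integrability statement, which passes to measures dominated in variation by Lemma~\ref{lemma-basic prop}(ii); it is never claimed that the Jordan parts are almost periodic, which is exactly what your reduction needs. Your alternative key step $(|\widehat f|^2*\mu)_{\mathsf{s}}=|\widehat f|^2*\mu_{\mathsf{s}}$ has the same problem in a different guise: the cited compatibility results are for convolution with $\Cc(G)$ functions or for Eberlein convolution with $SAP(G)$ functions, and extending them to the non-compactly supported kernel $|\widehat f|^2$ needs a limiting argument with uniform bounds, essentially the estimate you are trying to prove; and even granting it, finiteness of the continuous function $|\widehat f|^2*\mu_{\mathsf{s}}$ at $0$ bounds $\int_G|\widehat f|^2\,d|\mu_{\mathsf{s}}|$ only when $\mu_{\mathsf{s}}\ge 0$, so the positivity issue cannot be bypassed this way.

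The paper closes the argument without ever decomposing $\mu$ or $\mu_{\mathsf{s}}$, by estimating the variation directly. Fix $f\in\Cc(\widehat G)$, truncate $|\widehat f|^2$ to a compactly supported $h\ge 0$ with $h=|\widehat f|^2$ on a given compact $K$, and use the variation formula $\int_G h\,d|\mu_{\mathsf{s}}|=\sup\{\,|\int_G\phi\,d\mu_{\mathsf{s}}|\,:\,\phi\in\Cc(G),\ |\phi|\le h\,\}$. For each such $\phi$, vague convergence $\mu_{\mathsf{s}}=\lim_\alpha\mu\circledast f_\alpha$ and the identity $\phi^\dagger*(\mu\circledast f_\alpha)=(\phi^\dagger*\mu)\circledast f_\alpha$ reduce everything to the uniform bound $\int_G|\widehat f|^2(t+s)\,d|\mu|(s)\le C_f$ furnished by Theorem~\ref{thm: admis implies strong admis}(ii), giving $|\int_G\phi\,d\mu_{\mathsf{s}}|\le C_f$ with $C_f$ independent of $K$; regularity then yields $\int_G|\widehat f|^2\,d|\mu_{\mathsf{s}}|\le C_f$, and $\mu_0=\mu-\mu_{\mathsf{s}}$ follows as you say. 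Your positive-case sketch (after correcting two slips: $|\widehat f|^2*\mu$ is only in $\Cu(G)$, not $SAP(G)$, and the quantity to control is $\int_G|\widehat f|^2\,d\mu_{\mathsf{s}}$ rather than a mean of $|\widehat f|^2*\mu$) is essentially this estimate, but without the variation-supremum device you have no route from the positive case to general complex $\mu\in\WAP(G)$.
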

\begin{proof}
$\Leftarrow$ is obvious.

$\Rightarrow$. Let $f \in \Cc(\widehat{G})$. Fix some compact $K$ and pick some $g \in \Cc(G)$ such that $g \geq 1_{K}$.

Let $h := g \left| \widehat{f} \right|^2$. Then $h \in \Cc(G), h \geq 0$ and $h=\left|\widehat{f}\right|^2$ on $K$.

Finally, let $f_\alpha \in SAP(G)$ be an approximate identity for the Eberlein convolution, such that $f_\alpha \geq 0$ and $f_\alpha(-x)=f_\alpha(x)$. Then \cite[Cor.~7.2]{ARMA}
\begin{displaymath}
\mu_{\mathsf{s}}= \lim_\alpha \mu \circledast f_\alpha
\end{displaymath}
in the product topology on $\cM^\infty(G)$. In particular $\mu \circledast f_\alpha$ converges in the vague topology to $\mu_{\mathsf{s}}$.

Next, we have
\begin{equation}\label{Eq22}
\int_G h(t) d \left|\mu_{\mathsf{s}} \right|(t)   = \sup \{ \left| \int_G \phi(t) d \mu_{\mathsf{s}}(t) \right|  | \phi \in \Cc(G), |\phi| \leq h \} \,.
\end{equation}
Let $\phi \in \Cc(G)$ be so that  $|\phi| \leq h$. Then
\begin{equation}\label{Eq23}
\begin{split}
  \left| \int_G \phi(t) d \mu_{\mathsf{s}}(t) \right| &= \lim_\alpha  \left| \int_G \phi(t) d  \mu \circledast f_\alpha (t) \right| \\
   &= \lim_\alpha  \left|  \phi^\dagger *( \mu \circledast f_\alpha) (0) \right|\\
     &= \lim_\alpha  \left|  (\phi^\dagger * \mu) \circledast f_\alpha (0) \right| \,,
\end{split}
\end{equation}
with the last equality following from \cite[Thm.~6.4]{ARMA}.

Now, since $\mu$ is weak admissible, by Theorem \ref{thm: admis implies strong admis} (ii), there exists a constant $C_f$, which depends only on $f$ such that for all $t \in G$ we have
\begin{displaymath}
\int_{G} \left| \widehat{f} \right|^2(t+s) d | \mu |(s) \leq C_f \,.
\end{displaymath}
This implies that for all $t \in G$ we also have
\begin{displaymath}
\int_{G} h(t+s) d | \mu |(s) \leq C_f
\end{displaymath}
and hence,
\begin{displaymath}
\int_{G} \left|\phi \right|(t+s) d | \mu |(s) \leq C_f \,.
\end{displaymath}
Therefore, if $A_n$ is any F\"olner sequence, by the definition of Eberlein convolution, we have for all $\alpha$:
\begin{eqnarray*}
 \left|  (\phi^\dagger * \mu) \circledast f_\alpha (0) \right| &=&  \left| \lim_n \frac{1}{\vol(A_n)} \int_{A_n}(\phi^\dagger * \mu)(t) f_\alpha (-t) dt\right| \\
 &\leq&  \limsup_n \frac{1}{\vol(A_n)} \int_{A_n} \left|\phi^\dagger * \mu \right|(t) f_\alpha (t) dt \\
  &\leq&  \limsup_n \frac{1}{\vol(A_n)} \int_{A_n} \left|\int_{G} \phi (-t+s) d  \mu (s) \right|(t) f_\alpha (t) dt \\
  &\leq&  \limsup_n \frac{1}{\vol(A_n)} \int_{A_n}\bigl( \int_{G} \left|\phi \right|(-t+s) d | \mu |(s) \bigr) f_\alpha (t) dt \\
  &\leq&  \limsup_n \frac{1}{\vol(A_n)} C_f f_\alpha (t) dt \\
  &=&C_f M(f_\alpha) \,.
\end{eqnarray*}
Hence, by (\ref{Eq23}) we have
\begin{displaymath}
\left| \int_G \phi(t) d \mu_{\mathsf{s}}(t) \right| \leq \limsup_\alpha C_f M(f_\alpha)= C_f\,.
\end{displaymath}
By (\ref{Eq22}) we get
\begin{displaymath}
\int_G h(t) d \left|\mu_{\mathsf{s}} \right|(t) \leq C_f \,.
\end{displaymath}
This shows that
\begin{displaymath}
\int_{K} \left|\widehat{f}\right|^2(t) d  \left|\mu_{\mathsf{s}} \right|(t) \leq C_f \,.
\end{displaymath}
As the constant is independent of the compact set $K$, and $K \subset G$ was an arbitrary compact set, by the regularity of the measure $\left|\widehat{f}\right|^2(t) \left|\mu_{\mathsf{s}} \right|$ we get
\begin{displaymath}
\int_{G} \left|\widehat{f}\right|^2(t) d  \left|\mu_{\mathsf{s}} \right|(t) \leq C_f \,.
\end{displaymath}
This proves that $\mu_{\mathsf{s}}$ is weak admissible.

Finally $\mu_0= \mu-\mu_{\mathsf{s}}$ is weak admissible as a difference of two weak admissible measures.
\end{proof}

\section{Weak Admissible measures on $\RR^d$}

In this section we connect our concept of weak admissible with the concepts of admissibility and uniform boundedness which appeared in the work of Lin \cite{lin}, Thornett \cite{tho} and Robertson and Thornett\cite{RobTho}.

Let us first recall some of their definitions:

\begin{definition} A Borel measure $\mu$ on $\RR^d$ is called \textbf{$r$-admissible} if for all $f \in L^2(\RR^d)$ with $\supp(f) \subset [-r,r]^d$ we have
\[
\int_{\RR^d} \left|\widehat{f}(y) \right|^2 d \mu(y) < \infty
\]
\end{definition}

Of importance to us is the following Theorem, see \cite[Thm.~1]{lin}, \cite[Thm.~4.2, Thm.4.3]{tho}.

\begin{theorem} \label{thm: lin} A positive Borel measure $\mu$ on $\RR^d$ is $r$-admissible for some $r$ if and only if it is translation bounded.
In particular, a measure is $r$-admissible for some $r>0$ if and only if it is $r$-admissible for all $r >0$.
\end{theorem}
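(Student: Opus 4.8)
\emph{Proof that $r$-admissibility implies translation boundedness.} I would first note that $r$-admissibility already forces $\mu$ to be locally finite. Pick $f_0\in L^2(\RR^d)$ with $\supp(f_0)\subset[-r,r]^d$ and $\widehat{f_0}(0)\ne0$ — for instance $f_0=1_{[-r,r]^d}$, whose transform $\widehat{f_0}(y)=\prod_{j=1}^{d}\frac{\sin(2\pi r y_j)}{\pi y_j}$ is nonzero near the origin — and fix a cube $C_\delta=[-\delta,\delta]^d$ and a constant $c>0$ with $|\widehat{f_0}|\ge c$ on $C_\delta$. The modulates $g_x(t):=e^{2\pi i x\cdot t}f_0(t)$ lie in $L^2(\RR^d)$ with support in $[-r,r]^d$ and satisfy $\widehat{g_x}=T_x\widehat{f_0}$, so the hypothesis yields $\mu(x+C_\delta)\le c^{-2}\int_{\RR^d}|\widehat{f_0}(y-x)|^2\,d\mu(y)<\infty$ for every $x$; thus $\mu$ is a Radon, hence $\sigma$-finite, measure, and $L^2(\mu)$ is a Hilbert space. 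Next, the linear map $T:L^2([-r,r]^d)\to L^2(\mu)$, $Tf:=\widehat f$, has closed graph: if $f_n\to f$ in $L^2$ then $\|\widehat{f_n}-\widehat f\|_\infty\le(2r)^{d/2}\|f_n-f\|_2\to0$, so $\widehat{f_n}\to\widehat f$ in $L^2(E,\mu)$ on every set $E$ of finite measure, and $\sigma$-finiteness forces the limits to agree. This is exactly the argument in the proof of Theorem~\ref{thm: admis implies strong admis}(i), and by the closed graph theorem it gives a constant $C=C(r,\mu)$ with $\int_{\RR^d}|\widehat f|^2\,d\mu\le C\|f\|_2^2$ for all $f\in L^2([-r,r]^d)$. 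Applying this to $g_x$ and using $|\widehat{f_0}(y-x)|\ge c$ on $x+C_\delta$ gives $\mu(x+C_\delta)\le c^{-2}C\|f_0\|_2^2$ uniformly in $x$; by the remark following the definition of translation boundedness (finiteness of $\|\mu\|_K$ for a single compact $K$ with nonempty interior suffices), $\mu\in\cM^\infty(\RR^d)$.

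\emph{Proof that translation boundedness implies $r$-admissibility for every $r$.} Here I would use a Plancherel--P\'olya/Sobolev estimate. If $\supp(f)\subset[-r,r]^d$, then for every multi-index $\alpha$ one has $\partial^\alpha\widehat f=\widehat{(-2\pi i t)^\alpha f}$, hence by Plancherel $\|\partial^\alpha\widehat f\|_{L^2(\RR^d)}\le(2\pi r)^{|\alpha|}\|f\|_{L^2}$, so $\widehat f\in W^{k,2}(\RR^d)$ with $\|\widehat f\|_{W^{k,2}}\le C_{k,r}\|f\|_{L^2}$ for every $k$. Fixing $k>d/2$ and using the translation-invariant Sobolev embedding $W^{k,2}(Q)\hookrightarrow L^\infty(Q)$ on the finitely overlapping cubes $Q_n:=n+[-1,2]^d$ ($n\in\ZZ^d$), each containing $n+[0,1]^d$, together with $\|\mu\|_{[0,1]^d}<\infty$, I would estimate
\begin{displaymath}
\int_{\RR^d}|\widehat f|^2\,d\mu=\sum_{n\in\ZZ^d}\int_{n+[0,1)^d}|\widehat f|^2\,d\mu\le\|\mu\|_{[0,1]^d}\sum_{n\in\ZZ^d}\sup_{Q_n}|\widehat f|^2\le C'\sum_{n\in\ZZ^d}\|\widehat f\|_{W^{k,2}(Q_n)}^2\le C''\|f\|_{L^2}^2<\infty,
\end{displaymath}
which is $r$-admissibility. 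Since the first part makes both ``$r$-admissible for some $r$'' and ``$r$-admissible for all $r$'' equivalent to translation boundedness — a condition not referring to $r$ — the last assertion follows at once.

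The step I expect to be the main obstacle is the first implication: passing from the qualitative, pointwise hypothesis ``$\widehat f\in L^1(\mu)$ for each $f$'' to a uniform bound (closed graph theorem) and then extracting from that bound the geometric conclusion of translation boundedness, by testing against the modulates of a single function whose transform is bounded below near $0$. The converse is softer, the only delicate points being the translation invariance of the Sobolev constant and the finite-overlap bookkeeping that lets one sum the local $L^\infty$ bounds against $\mu$.
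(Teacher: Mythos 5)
Your proof is correct, but note that the paper itself offers no proof of this statement at all: it is imported verbatim from the literature (Lin \cite[Thm.~1]{lin} and Thornett \cite[Thm.~4.2, Thm.~4.3]{tho}), so there is nothing internal to compare against except the analogous arguments elsewhere in the paper. Your first implication (admissible $\Rightarrow$ translation bounded) is essentially the same closed-graph mechanism the paper uses in Theorem~\ref{thm: admis implies strong admis}(i) and (iv), transplanted from test functions in $\Cc(\widehat{G})$ to the Banach space $L^2([-r,r]^d)$, plus the standard trick of testing against modulates $e^{2\pi i x\cdot t}f_0(t)$ of one function whose transform is bounded below near $0$; all the delicate points (local finiteness of $\mu$ before invoking $L^2(\mu)$, $\sigma$-finiteness to identify the two limits in the closed-graph step, and the reduction to a single compact set with nonempty interior) are handled correctly. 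Your converse, via $\|\partial^\alpha \widehat f\|_2\le (2\pi r)^{|\alpha|}\|f\|_2$ and the translation-invariant Sobolev embedding on a finitely overlapping family of unit-scale cubes, is the classical Plancherel--P\'olya-type estimate that underlies the cited results of Thornett and Robertson--Thornett \cite{RobTho}; it is sound, including the finite-overlap summation and the fact that $\widehat f$ is continuous (as $f\in L^1\cap L^2$), so the pointwise suprema are meaningful. The net effect of your argument is to make the paper self-contained at this point, at the cost of about a page, whereas the paper simply delegates to \cite{lin,tho}; your derivation of the ``in particular'' clause (both quantified versions of admissibility are equivalent to translation boundedness, which does not mention $r$) matches the intent of the statement.
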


Because of this, we will simply call a measure admissible instead of $r$-admissible.

As a consequence we get the following simple characterisation of weak admissibility on $\RR^d$.

\begin{theorem} Let $\mu$ be a Radon measure on $\RR^d$. Then, the following are equivalent:
\begin{itemize}
  \item [(i)] $\mu$ is translation bounded.
  \item [(ii)] $\left| \mu \right|$ is translation bounded.
  \item [(iii)]$\left| \mu \right|$ is admissible.
  \item [(iv)] $\mu$ is weakly admissible.
   \item [(v)] For all bounded Borel sets $A\subset \RR^d$ we have $\widehat{1_A} \in L^2( \mu)$.
\end{itemize}
\end{theorem}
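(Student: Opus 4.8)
The plan is to prove the equivalences via the cycle (iv)$\Rightarrow$(i)$\Leftrightarrow$(ii)$\Leftrightarrow$(iii)$\Rightarrow$(iv) together with (iii)$\Rightarrow$(v)$\Rightarrow$(iii); all but the last implication are short.

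First, (i)$\Leftrightarrow$(ii) is immediate from the definition of translation boundedness, since $\|\mu\|_K=\sup_{x}|\mu|(x+K)$ and $\bigl|\,|\mu|\,\bigr|=|\mu|$, so ``$\mu$ translation bounded'' and ``$|\mu|$ translation bounded'' are literally the same statement. Next, (ii)$\Leftrightarrow$(iii) is Theorem~\ref{thm: lin} applied to the positive Radon measure $|\mu|$, where I also use the second assertion of that theorem so that ``admissible'' is unambiguous. For (iii)$\Rightarrow$(iv): any $f\in\Cc(\RR^d)$ belongs to $L^2(\RR^d)$ and has $\supp f\subseteq[-r,r]^d$ for some $r$, so admissibility of $|\mu|$ yields $\widehat f\in L^2(|\mu|)$; since $\widehat{\RR^d}=\RR^d$ this says precisely $\reallywidehat{\Cc(\widehat{\RR^d})}\subset L^2(|\mu|)$, i.e.\ $\mu$ is weakly admissible. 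The implication (iv)$\Rightarrow$(i) is Theorem~\ref{thm: admis implies strong admis}(iv). Finally (iii)$\Rightarrow$(v) is trivial: each $1_A$ with $A\subseteq\RR^d$ bounded Borel is an $L^2$-function of compact support, so $\widehat{1_A}\in L^2(|\mu|)=L^2(\mu)$ by admissibility (here, as usual, $L^2(\mu)$ is read as $L^2(|\mu|)$).

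The one implication carrying real content is (v)$\Rightarrow$(iii), which I would prove by showing that $|\mu|$ is $r$-admissible for each $r>0$, i.e.\ that $\widehat f\in L^2(|\mu|)$ for \emph{every} $f\in L^2(\RR^d)$ with $\supp f\subseteq[-r,r]^d$, given this only for indicator functions. The linear span of $\{1_A:A\subseteq[-r,r]^d\text{ bounded Borel}\}$ is the space $\cS_r$ of simple functions supported in $[-r,r]^d$, which is dense in $L^2([-r,r]^d)$, and by (v) the Fourier transform maps $\cS_r$ into $L^2(|\mu|)$. Two elementary observations are available: (a) if $g\in L^2(\RR^d)$ with $\supp g\subseteq[-r,r]^d$ then $\|\widehat g\|_\infty\le\|g\|_1\le(2r)^{d/2}\|g\|_2$, so $L^2$-convergence of such functions forces uniform, hence pointwise, convergence of the $\widehat g$'s; and (b) by Plancherel the Fourier transform is $L^2(\RR^d)$-continuous. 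Using (a), the Fourier transform, regarded as a densely defined operator from $L^2([-r,r]^d)$ to $L^2(|\mu|)$ with domain $\cS_r$, has closed graph, exactly as in the closed-graph argument in the proof of Theorem~\ref{thm: admis implies strong admis}(i); and any bounded extension of it must coincide with $f\mapsto\widehat f$. The main obstacle is to deduce boundedness of this operator from its being closed and everywhere defined \emph{only on the dense subspace} $\cS_r$ — the closed graph theorem does not apply verbatim because $\cS_r$ is incomplete. I would handle this by passing to the graph space $D:=\{f\in L^2([-r,r]^d):\widehat f\in L^2(|\mu|)\}$, complete under $\|f\|_D^2:=\|f\|_2^2+\int|\widehat f|^2\,d|\mu|$ and containing $\cS_r$ densely, and invoking the characterisations of admissible and (uniformly) bounded measures from the work of Thornett~\cite{tho} and Robertson and Thornett~\cite{RobTho}, which are tailored precisely to this passage from indicators to general $L^2$-functions, to conclude $D=L^2([-r,r]^d)$. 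Once the operator is bounded it extends to all of $L^2([-r,r]^d)$ with the extension equal to $f\mapsto\widehat f$ by (a), so $|\mu|$ is $r$-admissible; then (iii) holds and, with the chain above, so do (i), (ii) and (iv), closing the argument.
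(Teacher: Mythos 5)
Your argument is correct and essentially the paper's own: the chain (i) $\Leftrightarrow$ (ii) $\Leftrightarrow$ (iii) $\Rightarrow$ (iv) $\Rightarrow$ (i) via Theorem~\ref{thm: lin} and Theorem~\ref{thm: admis implies strong admis}(iv), with the indicator condition (v) ultimately settled by the Robertson--Thornett theorem \cite{RobTho}, which is exactly how the paper handles it (it cites \cite{RobTho} for (ii) $\Leftrightarrow$ (v)). Be aware, though, that your closed-graph/graph-space scaffolding for (v) $\Rightarrow$ (iii) carries no weight on its own (closedness of a densely defined operator does not yield boundedness, as you yourself note, and density of the simple functions in $D$ for the graph norm is never established): the step goes through only because \cite{RobTho} gives that the indicator condition forces translation boundedness of $\left|\mu\right|$ and Theorem~\ref{thm: lin} then gives admissibility, so that machinery can be deleted entirely.
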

\begin{proof} The implication $(i) \Rightarrow (ii)$ is obvious.

$(ii) \Rightarrow (iii)$ follows from Theorem~\ref{thm: lin}.

$(iii) \Rightarrow (iv)$ is immediate as $\left| \mu \right|$ is admissible implies $\left| \mu \right|$ is weakly admissible which in turn implies $\left| \mu \right|$ is weakly admissible.

$(iv) \Rightarrow (i)$ follows from Theorem~\ref{thm: admis implies strong admis}.

The equivalence $(ii) \Leftrightarrow (v)$ is \cite[Theorem]{RobTho} applied to $|\mu |$.
\end{proof}

\section{Weak admissibility and the Fourier transform}

In this section we take a closer look at weak admissibility and Fourier trasnformability. We start by reviewing a criteria for twice Fourier transformability of a transformable measure. Next, we give a criteria for Fourier transformability of a measure $\mu$ on $\RR^d$ in terms of its Fourier transform as a tempered distribution.

\begin{theorem}\cite[Thm.3.10]{CRS2}\label{thm: twice FT} Let $\mu$ be a Fourier transformable measure. Then $\mu$ is twice Fourier transformable if and only if $\mu$ is a weak admissible measure.

In this case we have
\begin{displaymath}
\widehat{\widehat{\mu}}=\mu^\dagger \,.
\end{displaymath}
\end{theorem}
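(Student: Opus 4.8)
\emph{Proof plan.} The natural candidate for $\widehat{\widehat{\mu}}$ is the reflection $\mu^\dagger$, and the proof amounts to checking that $\mu^\dagger$ does the job precisely when $\mu$ is weakly admissible. Recall that a measure $\nu$ on $\widehat{G}$ is Fourier transformable with transform $\rho$ on $\widehat{\widehat{G}}=G$ if and only if $\reallywidehat{K_2(\widehat{G})}\subset L^1(|\rho|)$ and $\langle\nu,f\rangle=\langle\rho,\widecheck{f}\rangle$ for all $f\in K_2(\widehat{G})$. Applying this with $\nu=\widehat{\mu}$ and the candidate $\rho=\mu^\dagger$, the integrability requirement $\reallywidehat{K_2(\widehat{G})}\subset L^1(|\mu^\dagger|)$ just says that $\mu^\dagger$ is weakly admissible, which by Lemma~\ref{lemma-basic prop}(v) (and the fact that $\dagger$ is an involution) is equivalent to $\mu$ being weakly admissible. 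So for the implication ``$\mu$ weakly admissible $\Rightarrow$ $\mu$ twice Fourier transformable'' only the identity
\begin{displaymath}
\langle\widehat{\mu},g*\widetilde{g}\rangle=\langle\mu^\dagger,|\widecheck{g}|^2\rangle\qquad(g\in\Cc(\widehat{G}))
\end{displaymath}
remains; by depolarisation this yields $\langle\widehat{\mu},f\rangle=\langle\mu^\dagger,\widecheck{f}\rangle$ for all $f\in K_2(\widehat{G})$, and hence that $\widehat{\mu}$ is Fourier transformable with $\widehat{\widehat{\mu}}=\mu^\dagger$.

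To prove this identity I would unwind both sides. Using $\widecheck{g}(-x)=\widehat{g}(x)$ one has $\langle\mu^\dagger,|\widecheck{g}|^2\rangle=\int_G|\widehat{g}(x)|^2\,d\mu(x)=h_g(0)$, where $h_g(t):=\int_G|\widehat{g}(x+t)|^2\,d\mu(x)$. Since $\mu$ is weakly admissible, Theorem~\ref{thm: admis implies strong admis} tells us that $\mu$ is translation bounded and that $h_g\in\Cu(G)$; moreover, writing $|\widehat{g}|^2=\reallywidehat{g*\widetilde{g}}\in L^1(G)$, we have $h_g=|\widehat{g}|^2*\mu^\dagger$, the convolution being well defined because $\mu^\dagger$ is translation bounded and $|\widehat{g}|^2\in L^1(G)$. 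Now $\mu^\dagger$ is Fourier transformable (it is the reflection of the transformable measure $\mu$, with $\widehat{\mu^\dagger}=(\widehat{\mu})^\dagger$), and the Fourier transform of the $L^1$-density $|\widehat{g}|^2$ is, up to reflection, the compactly supported continuous function $g*\widetilde{g}$. Hence the convolution theorem for transformable measures \cite{ARMA,MoSt} gives that $h_g$ is Fourier transformable with $\widehat{h_g}$ equal, up to reflection, to the measure $(g*\widetilde{g})\cdot\widehat{\mu}$, which is \emph{finite} because $g*\widetilde{g}\in\Cc(\widehat{G})$ and $\widehat{\mu}$ is a Radon measure. Recovering the continuous function $h_g$ from this finite measure by Fourier inversion and evaluating at $0$ gives $h_g(0)=\int_{\widehat{G}}(g*\widetilde{g})\,d\widehat{\mu}=\langle\widehat{\mu},g*\widetilde{g}\rangle$, which is exactly the identity needed.

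For the converse, suppose $\mu$ is twice Fourier transformable, so that $\widehat{\mu}$ is Fourier transformable. By Lemma~\ref{lemma-basic prop}(iv) applied to $\widehat{\mu}$, the measure $\widehat{\widehat{\mu}}$ is weakly admissible, while the general Fourier inversion theorem for transformable measures \cite{ARMA1,MoSt} identifies $\widehat{\widehat{\mu}}=\mu^\dagger$. Consequently $\mu^\dagger$, and therefore $\mu$ by Lemma~\ref{lemma-basic prop}(v), is weakly admissible; and the formula $\widehat{\widehat{\mu}}=\mu^\dagger$ holds whenever either side of the equivalence is satisfied.

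The main obstacle is the middle paragraph: one must justify the identity $\widehat{h_g}=(g*\widetilde{g})\,\widehat{\mu}$ (up to reflection) with care, because $|\widehat{g}|^2$ is band-limited but not compactly supported, so one cannot simply invoke the $\Cc(G)$-form of the convolution theorem and must either use an $L^1$-version or pass to a limit through an approximation of $|\widehat{g}|^2$ by compactly supported functions, controlling the passage by the uniform bounds in Theorem~\ref{thm: admis implies strong admis}; one must also keep the various reflections consistent and confirm that Fourier inversion of the finite measure $(g*\widetilde{g})\widehat{\mu}$ genuinely returns the function $h_g$ supplied by Theorem~\ref{thm: admis implies strong admis}. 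The reduction steps and the uses of Lemma~\ref{lemma-basic prop} are routine.
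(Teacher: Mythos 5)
The paper itself does not prove Theorem~\ref{thm: twice FT}: it is quoted from \cite[Thm.~3.10]{CRS2}, so your argument has to stand on its own. Your reduction is the right one: the candidate transform of $\widehat{\mu}$ is $\mu^\dagger$, the integrability half of the definition of transformability of $\widehat{\mu}$ is precisely weak admissibility of $\mu^\dagger$ (equivalently of $\mu$, by Lemma~\ref{lemma-basic prop}(v)), and the converse direction is handled correctly by applying Lemma~\ref{lemma-basic prop}(iv) to $\widehat{\mu}$ together with the double-transform identity $\widehat{\widehat{\mu}}=\mu^\dagger$ for twice transformable measures, which is indeed citable from \cite{ARMA1,MoSt}.

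The forward direction, however, has a genuine gap, and it sits exactly where the entire content of the theorem lies: the identity $\langle\widehat{\mu},g*\widetilde{g}\rangle=\langle\mu^\dagger,|\widecheck{g}|^2\rangle$ for $g\in\Cc(\widehat{G})$. The convolution theorem you invoke from \cite{ARMA,MoSt} applies to $f\in\Cc(G)$ (or $K_2(G)$) convolved with a transformable measure; the function $|\widehat{g}|^2$ lies in $L^1(G)\cap\Cz(G)$ but is not compactly supported, so that theorem does not give $\widehat{h_g}=(g*\widetilde{g})\,\widehat{\mu}$ (up to reflection), as you yourself concede in your final paragraph. The repair you sketch is also not routine: approximating $|\widehat{g}|^2$ by $\varphi_n\in\Cc(G)$ yields convergence of $\varphi_n*\mu^\dagger$ to $h_g$ and of $\widehat{\varphi_n}\,\widehat{\mu^\dagger}$ to $(g*\widetilde{g})^{\dagger}\,\widehat{\mu^\dagger}$ only in a local/vague sense, and Fourier transformability of measures does not pass through vague limits; one must use the uniform domination coming from weak admissibility (Theorem~\ref{thm: admis implies strong admis}) to justify exchanging the limit with the integrals against $\mu$ and $\widehat{\mu}$, and that argument --- which is in essence the actual proof in \cite{CRS2} --- is never carried out. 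A smaller inaccuracy in the same paragraph: the convolution $|\widehat{g}|^2*\mu^\dagger$ is not pointwise well defined merely because $|\widehat{g}|^2\in L^1(G)$ and $\mu^\dagger$ is translation bounded (an $L^1\cap\Cz$ function with tall thin spikes at the integers convolved with $\sum_n\delta_n$ is infinite at $0$); what makes $h_g$ finite everywhere and uniformly continuous is again weak admissibility, via Theorem~\ref{thm: admis implies strong admis}(ii),(iii), which you should cite already for the definition of $h_g$, not only for its continuity.
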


\smallskip

Next, consider a measure $\mu$ on $\RR^d$. If $\mu$ is tempered as a distribution, then $\mu$ has a Fourier transform $\psi$, which is a tempered distribution.
If $\psi$ is not a measure, it is easy to see that $\mu$ cannot be Fourier transformable in the measure sense. An interesting question is: what happens when $\psi$ is a measure?

As shown in \cite{ARMA1}, it does not necessary follows that $\mu$ is Fourier transformable as a measure.

In the following Theorem we prove that in this situation, the Fourier transformability of $\mu$ in measure sense is equivalent to the weak admissibility, and hence translation boundedness of $\psi$.

\begin{theorem}\label{thm: FT measure and distribution} Let $\mu \in \cM(\RR^d)$. Then, $\mu$ is Fourier transformable as a measure if and only if the following hold:
\begin{itemize}
\item[(i)] $\mu$ is tempered.
\item[(ii)] The Fourier transform $\nu$  of $\mu$ as a tempered distribution is a translation bounded measure.
\end{itemize}
Moreover, in this case we have
\begin{displaymath}
\widehat{\mu}=\nu \,.
\end{displaymath}
\end{theorem}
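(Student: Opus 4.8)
The plan is to treat each implication by passing back and forth between the measure pairing of $\mu$ and its pairing as a distribution, bridging the gap with a smooth mollifier. The only input beyond the results already established is the standard fact that a translation bounded measure on $\RR^d$ is a tempered distribution which integrates every Schwartz function and induces a continuous linear functional on $\mathcal{S}(\RR^d)$, together with the equivalence of translation boundedness and weak admissibility on $\RR^d$ coming from the work of Lin and Thornett (Theorem~\ref{thm: lin} and Theorem~\ref{thm: admis implies strong admis}). Throughout, $\phi_\varepsilon$ denotes a mollifier in $\Cc^\infty(\RR^d)$ with $\phi_\varepsilon \geq 0$ and $\int \phi_\varepsilon = 1$, so that $\widecheck{\phi_\varepsilon} \to 1$ pointwise with $|\widecheck{\phi_\varepsilon}| \leq 1$.

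Suppose first that $\mu$ is Fourier transformable. By Lemma~\ref{lemma-basic prop}(iv) the measure $\widehat{\mu}$ is weakly admissible, hence translation bounded by Theorem~\ref{thm: admis implies strong admis}(iv), and therefore a tempered distribution. Given $\phi \in \Cc^\infty(\RR^d)$, the function $\phi * \phi_\varepsilon$ is a convolution of two functions in $\Cc(\RR^d)$, hence lies in $\Kt(\RR^d)$, so the defining identity of $\widehat{\mu}$ yields $\langle \mu, \phi * \phi_\varepsilon \rangle = \int \widecheck{\phi}\, \widecheck{\phi_\varepsilon}\, d\widehat{\mu}$, the integrand being in $L^1(|\widehat{\mu}|)$ by weak admissibility. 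Letting $\varepsilon \to 0$, the left side converges to $\langle \mu, \phi\rangle$ (uniform convergence of $\phi * \phi_\varepsilon$ to $\phi$ on a fixed compact set, on which $|\mu|$ is finite) and the right side to $\int \widecheck{\phi}\, d\widehat{\mu}$ by dominated convergence (dominating function $|\widecheck{\phi}| \in L^1(|\widehat{\mu}|)$); thus $\langle \mu, \phi \rangle = \int \widecheck{\phi}\, d\widehat{\mu}$ for all $\phi \in \Cc^\infty(\RR^d)$. The right-hand side is a continuous linear functional of $\phi$ on $\mathcal{S}(\RR^d)$, being the composition of the topological automorphism $\phi \mapsto \widecheck{\phi}$ of $\mathcal{S}(\RR^d)$ with the tempered measure $\widehat{\mu}$; since $\Cc^\infty(\RR^d)$ is dense in $\mathcal{S}(\RR^d)$, it follows that $\mu$ is a tempered distribution, which is (i), and that $\langle \mu, g \rangle = \int \widecheck{g}\, d\widehat{\mu}$ for every $g \in \mathcal{S}(\RR^d)$. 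Applying this with $g = \widehat{\psi}$ and using Fourier inversion $\widecheck{\widehat{\psi}} = \psi$, the distributional Fourier transform $\nu$ of $\mu$ satisfies $\langle \nu, \psi \rangle = \langle \mu, \widehat{\psi}\rangle = \int \psi\, d\widehat{\mu}$ for all $\psi \in \mathcal{S}(\RR^d)$, i.e. $\nu = \widehat{\mu}$. As $\widehat{\mu}$ is a translation bounded measure, this gives (ii) together with the asserted equality $\widehat{\mu} = \nu$.

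Conversely, assume (i) and (ii) and put $\widehat{\mu} := \nu$. Since $\nu$ is translation bounded on $\RR^d$ it is weakly admissible (Theorem~\ref{thm: lin} together with the equivalences established in Section~4), so $\widehat{\Kt(\RR^d)} \subset L^1(|\nu|)$; hence, by the characterisation of Fourier transformability in terms of $\Kt$ (the Remark following the definition), it suffices to verify $\langle \mu, f \rangle = \langle \nu, \widecheck{f}\rangle$ for all $f \in \Kt(\RR^d)$. Fix $f \in \Kt(\RR^d) \subset \Cc(\RR^d)$. Then $f * \phi_\varepsilon \in \Cc^\infty(\RR^d)$ and $\widecheck{f * \phi_\varepsilon} = \widecheck{f}\, \widecheck{\phi_\varepsilon} \in \mathcal{S}(\RR^d)$, so, $\mu$ being tempered with distributional Fourier transform $\nu$,
\begin{displaymath}
\langle \mu, f * \phi_\varepsilon \rangle = \bigl\langle \mu, \widehat{\widecheck{f}\, \widecheck{\phi_\varepsilon}} \bigr\rangle = \bigl\langle \nu, \widecheck{f}\, \widecheck{\phi_\varepsilon} \bigr\rangle = \int \widecheck{f}\, \widecheck{\phi_\varepsilon}\, d\nu \,,
\end{displaymath}
the last pairing being the measure integral because $\nu$ is a translation bounded measure. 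Letting $\varepsilon \to 0$, the left side tends to $\langle \mu, f\rangle$ (uniform convergence on a fixed compact set carrying a finite part of $|\mu|$) while the right side tends to $\int \widecheck{f}\, d\nu = \langle \nu, \widecheck{f}\rangle$ by dominated convergence, since $\widecheck{\phi_\varepsilon} \to 1$ boundedly and $\widecheck{f} \in \widehat{\Kt(\RR^d)} \subset L^1(|\nu|)$. This is the required identity, so $\mu$ is Fourier transformable with $\widehat{\mu} = \nu$.

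The delicate point, and the step I expect to be the main obstacle, is the interplay between the two pairings: $\mu$ itself need not be translation bounded, and indeed need not integrate Schwartz functions as a measure (this is precisely the phenomenon exhibited by the example of Argabright and deLamadrid recalled in the introduction), so one may never pair $\mu$ directly with the Schwartz-class functions $\widecheck{f}$ or $\widehat{\psi}$. Every such pairing must be routed through the distributional Fourier transform and through the mollification $\phi_\varepsilon$, and at each stage one has to check that the function being integrated against the translation bounded measure $\nu$ (or $\widehat{\mu}$) genuinely lies in the relevant $L^1$-space — which is exactly where the $\RR^d$-specific equivalence of translation boundedness and weak admissibility (Lin, Thornett) enters, and is indispensable.
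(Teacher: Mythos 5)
Your proof is correct, and its overall strategy is the same as the paper's: in the forward direction you show $\widehat{\mu}$ is translation bounded, hence tempered, identify $\mu$ on $\Cc^\infty(\RR^d)$ with a tempered distribution whose distributional Fourier transform is $\widehat{\mu}$, and read off (i), (ii) and $\nu=\widehat{\mu}$; in the converse you transport the distributional identity from Schwartz test functions to $\Kt(\RR^d)$ by an approximation argument whose integrability requirements are supplied by the $\RR^d$-specific equivalence of translation boundedness and weak admissibility (Lin--Thornett, Theorem~\ref{thm: lin} and Section~4), exactly as the paper does. The difference lies in the approximation scheme for the converse: the paper cuts off on the Fourier side with compactly supported plateau functions $h_n$ (equivalently convolves $f$ with the non-compactly supported $g_n=\widecheck{h_n}$), passes to the limit against $\mu$ via $L^2$/uniform convergence of $(f*g_n)*\widetilde{(f*g_n)}$ to $f*\widetilde{f}$, and then uses monotone convergence of $|\widecheck{f}|^2h_n^2$ against $\nu$; you instead mollify $f\in\Kt(\RR^d)$ directly by compactly supported $\phi_\varepsilon$, so that every approximant $f*\phi_\varepsilon$ stays inside one fixed compact set and the limit against the (possibly badly behaved) measure $\mu$ is immediate from local finiteness, while on the dual side you use dominated convergence with dominating function $|\widecheck{f}|\in L^1(|\nu|)$. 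Your variant is cleaner at precisely the delicate point you flag, namely pairing $\mu$ with approximants that are not compactly supported; the paper's cut-off keeps the dual-side integrand increasing, but both routes ultimately need the same input, that translation boundedness of $\nu$ on $\RR^d$ forces $\reallywidehat{\Kt(\RR^d)}\subset L^1(|\nu|)$. Similarly, in the forward direction you obtain the identity $\langle\mu,\phi\rangle=\langle\widehat{\mu},\widecheck{\phi}\rangle$ on $\Cc^\infty(\RR^d)$ by mollifying through $\Kt(\RR^d)$, where the paper asserts it directly for $g\in\Cc(\RR^d)$; your derivation is the more careful one, and the remainder of both arguments (density of $\Cc^\infty$ in $\cS$, uniqueness of the tempered extension, identification of the distributional transform with the translation bounded measure $\widehat{\mu}$) coincides.
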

\begin{proof}

$\Rightarrow:$

Since $\mu$ is Fourier transformable, the measure $\widehat{\mu}$ is translation bounded and
\begin{displaymath}
\langle\mu, g\rangle = \langle \widehat{\mu}, \widecheck{g}\rangle
\end{displaymath}
for all $g \in \Cc(\RR^d)$. Moreover, since $\nu$ is translation bounded, it is tempered as a distribution.

As $\widehat{\mu}$ is a translation bounded measure, it is a tempered distribution. Therefore, it is the Fourier transform of a tempered distribution $v$.

Then, for all $f  \in \cS^\infty(\RR^d)$ we have
\begin{displaymath}
\langle v, g\rangle = \langle \widehat{\mu}, \widecheck{g}\rangle \,.
\end{displaymath}
This shows that for all $g \in \Cc^\infty(\RR^d)$ we have
\begin{displaymath}
\langle v,g\rangle = \langle \widehat{\mu}, \widecheck{g}\rangle = \langle\mu, g\rangle \,.
\end{displaymath}
Therefore, $v= \mu$.

As $v$ is a tempered distribution, it follows that $\mu$ is tempered as a measure, and that $\widehat{\mu}$ is the Fourier transform of $\mu$ as a tempered distribution.

As $\widehat{\mu}$ is a translation bounded measure, the claim follows.

$\Leftarrow:$ We have
\begin{displaymath}
\langle\mu, g\rangle = \langle \nu, \widecheck{g}\rangle
\end{displaymath}
for all $g \in \cS(\RR^d)$. Therefore,
\begin{displaymath}
\langle\mu, g\rangle = \langle \nu, \widecheck{g}\rangle
\end{displaymath}
for all $g \in \Cc^\infty(\RR^d)$.

Next, fix some $h_n \in \cS(\RR^d)$ such that $\| h_n \|_\infty =1, h_n=1$ on $B_{n}(0)$ and $\supp(h_n) \subset B_{n+1}(0)$. Let $g_n := \widecheck{h_n}$.

Now, pick some $f \in \Cc(G)$. Then, as $f \in L^2(\RR^d)$, we have $\widehat{f} \in L^2(\RR^d)$. Therefore, by the Lebesgue Dominated convergence Theorem,
\begin{displaymath}
\| \widehat{f} h_n- \widehat{f} \|_2 \to 0 \,.
\end{displaymath}
This shows that $f*g_n \to f$ in $L^2(\RR^d)$. Then,
\begin{displaymath}
(f*g_n)*\widetilde{(f*g_n)} \to f*\widetilde{f} \mbox{ in } (\Cc(\RR^d), \| \, \|_\infty) \,.
\end{displaymath}
This gives
\begin{displaymath}
\langle \mu, f*\widetilde{f}\rangle = \lim_n \langle \mu, (f*g_n)*\widetilde{(f*g_n)} \rangle = \lim_n \langle \nu, \left| \widecheck{f} \right|^2 h_n^2\rangle \,.
\end{displaymath}
Finally, since $\left| \widecheck{f} \right|^2 \in L^1(| \nu |)$ and $\left| \widecheck{f} \right|^2 h_n^2$ is increasing and converges pointwise to $\left| \widecheck{f} \right|^2$, we get by the monotone convergence theorem
\begin{displaymath}
\lim_n \langle \nu, \left| \widecheck{f} \right|^2 h_n\rangle= \langle \nu, \left| \widecheck{f} \right|^2 \rangle \,.
\end{displaymath}
Therefore, for all $f \in \Cc(\RR^d)$ we have $\left|\widecheck{f}\right|^2 \in L^1(| \nu |)$ and
\begin{displaymath}
\langle \mu, f*\widetilde{f}\rangle= \langle \nu, \left| \widecheck{f} \right|^2 \rangle \,.
\end{displaymath}
This shows that $\mu$ is Fourier transformable and
\begin{displaymath}
\widehat{\mu}=\nu \,.
\end{displaymath}
\end{proof}

\section{Fourier Bohr Series and Formal Sums}

Given a weakly almost periodic measure $\mu$, we can introduce its Fourier Bohr series (see Defi.~\ref{defi:FB series} below). If $\mu$ is Fourier transformable, then its Fourier Bohr series is a measure, but no guarantee that this happens in general. For this reason, when we deal with the Fourier--Bohr series of a weakly almost periodic measure, we need to threat it as a formal sum (see also \cite{ARMA}).

In this section we review the basic properties of formal sums and the Fourier Bohr series of weakly almost periodic measures.

\subsection{Formal Sums}

We start by defining the notion of formal sums.

\begin{definition} By a {\bf formal sum} we understand an expression of the form
\begin{displaymath}
\omega=\sum_{x \in G} \omega_x \delta_x \,,
\end{displaymath}
where $\omega_x \in \CC$.

For such an expression we define the {\bf support} of $\omega$ as
\begin{displaymath}
\sup(\omega):= \{ x \in G | \omega_x \neq 0 \} \,.
\end{displaymath}
\end{definition}

\begin{remark} Any formal sum is a measure on $G_{\mathsf{d}}$. Our interest will be in formal sums which are measures on $G$, so we will simply treat them as formal sums.
\end{remark}

\smallskip

We will often speak of integrals of functions against formal sums. Note that we can multiply in an obvious way a formal sum by a function, and we obtain a new formal sum. We will say that the function $f$ is integrable against the formal sum $\omega$ if the product $f \omega$ is an absolutely summable series.

\begin{definition} Let $\omega$ be a formal sum, and $f: G \to \CC$ be a function. We say that $f$ is \textbf{integrable} with respect to $\omega$ if
\begin{displaymath}
\sum_{x \in G} \left|f(x) \omega(x) \right| < \infty \,.
\end{displaymath}
In this case we define the \textbf{integral}
\begin{displaymath}
\int_G f d \omega =\langle f ,\omega\rangle := \sum_{x \in G} f(x) \omega(x) \,.
\end{displaymath}
We also denote by
\begin{displaymath}
\sL^1(\omega) := \{ f : G \to \CC | f \mbox{ is integrable with respect to } \omega \} \,.
\end{displaymath}
and by
\begin{displaymath}
\sL^2(\omega) := \{ f : G \to \CC | f^2 \mbox{ is integrable with respect to } \omega \} \,.
\end{displaymath}
\end{definition}

\begin{remark}
\begin{itemize}
\item[(i)] If $f$ is integrable with respect to $\omega$ then $f(x) \omega_x=0$ for all but at most countably many $x \in G$.
\item[(ii)] $f$ is integrable with respect to $\omega$ exactly when $\sum_{x \in G} f(x) \omega(x)$ is absolutely convergent.
\item[(iii)] If we treat $\omega$ as a measure on $\Gd$, then a function is integrable with respect to $\omega$ exactly when it is integrable with respect to $\omega$ as a measure and $\sL^1(\omega)$ is just the standard $L^1(\omega)$ space.
\end{itemize}
\end{remark}

We start by characterizing formal sums which are measures on $G$. It is clear that every such measure is pure point.

\begin{theorem}\label{thm:formal sum is a measure} Let $\omega=\sum_{x \in G} \omega_x \delta_x$ be a formal sum. Then $\omega$ is a measure if and only if for all compact sets $K$ we have
\begin{displaymath}
\sum_{x \in K} \left| \omega_x \right| < \infty \,.
\end{displaymath}
\end{theorem}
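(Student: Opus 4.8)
The plan is to prove both directions by relating summability over compact sets to the defining bound of a measure, $|\omega(f)| \le C_K \|f\|_\infty$ for $f$ supported in $K$.

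For the forward direction, suppose $\omega$ is a measure. Fix a compact set $K$ and let $C_K$ be the associated constant. The key point is that $\omega$, viewed as a complex measure, has finite total variation on compact sets: for any compact $K$ I would use regularity (or directly the definition of $|\omega|$ recalled in the excerpt, $|\omega|(f) = \sup\{|\omega(g)| : |g| \le f\}$) to get that $|\omega|(K')$ is finite for any compact $K'$. Concretely, pick a compact neighbourhood $K'$ of $K$ and a function $\varphi \in \Cc(G)$ with $1_K \le \varphi \le 1_{K'}$; then for any finite subset $F \subset K \cap \supp(\omega)$ one can choose unimodular constants $c_x$ and a function $g \in \Cc(G)$ supported in $K'$ with $g(x) = c_x \varphi(x)$ on $F$, $|g| \le \varphi$, approximating so that $|\omega(g)|$ is close to $\sum_{x\in F} |\omega_x|$; the bound $|\omega(g)| \le C_{K'}\|g\|_\infty \le C_{K'}$ then forces $\sum_{x \in F}|\omega_x| \le C_{K'}$. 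Since this holds for every finite $F$, we conclude $\sum_{x \in K} |\omega_x| \le C_{K'} < \infty$. (Cleaner: just invoke that the variation $|\omega|$ is a positive Radon measure, hence finite on the compact set $K$, and $\sum_{x\in K}|\omega_x| = |\omega|(K)$ since $\omega$ is pure point.)

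For the converse, suppose $\sum_{x \in K}|\omega_x| < \infty$ for every compact $K$. I would check the defining inequality directly. Given $f \in \Cc(G)$ with $\supp(f) \subset K$, the product $f\omega$ is supported on $K \cap \supp(\omega)$, which is at most countable, and
\begin{displaymath}
\left| \sum_{x \in G} f(x)\omega_x \right| \le \sum_{x \in K} |f(x)||\omega_x| \le \|f\|_\infty \sum_{x \in K} |\omega_x| =: C_K \|f\|_\infty \,,
\end{displaymath}
so $f$ is integrable against $\omega$ and $\omega(f) := \sum_x f(x)\omega_x$ is a well-defined linear functional satisfying the required local boundedness. Linearity in $f$ is immediate from absolute convergence. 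Hence $\omega$ is a measure in the sense of the paper, and (as noted) it is manifestly pure point.

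The main obstacle is the forward direction: one must pass from the single scalar bound $|\omega(f)| \le C_K\|f\|_\infty$ to control of the \emph{sum of absolute values} of the atoms. The cleanest route is to invoke the existence of the variation measure $|\omega|$ (recalled in the excerpt, with references to \cite{Ped,ReiterSte,CRS2}), observe $|\omega|$ is a positive Radon measure so $|\omega|(K) < \infty$ for compact $K$, and note that for a purely atomic $\omega$ one has $\sum_{x \in K}|\omega_x| \le |\omega|(K)$. If one prefers to avoid citing the variation, the finite-subset argument sketched above, using a bump function $\varphi$ supported in a compact neighbourhood of $K$ and unimodular adjustments of the atom phases, does the job with only the bare definition of a measure.
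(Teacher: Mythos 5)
Your proposal is correct and follows essentially the same route as the paper: the forward direction via the variation measure $|\omega|$ being a (positive Radon) measure, hence finite on the compact set $K$, and the converse by directly checking absolute convergence, linearity, and the bound $\left| \langle f,\omega\rangle \right| \leq C_K \| f \|_\infty$ with $C_K = \sum_{x \in K} \left| \omega_x \right|$. The alternative bump-function argument you sketch for the forward direction is not needed (and its ``approximation'' step would require extra care to control the atoms of $\omega$ lying in the support of $g$ but outside the finite set $F$), but your primary argument matches the paper's proof.
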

\begin{proof}

$\Rightarrow:$ Let $K$ be a compact set. Since $\omega$ is a measure, then so is its variation measure $\left| \omega \right|$.

Therefore we have
\begin{displaymath}
\sum_{x \in K} \left| \omega_x \right| = \left| \omega \right|(K) < \infty \,.
\end{displaymath}
$\Leftarrow:$ We first prove that $\Cc(G) \subset \sL^1(\omega)$.

Let $f \in \Cc(G)$, and let $K$ be any compact set such that $\sup(f) \subset K$. Then
\begin{displaymath}
\sum_{x \in G} \left|f(x) \omega(x) \right| = \sum_{x \in K } \left|f(x) \omega(x) \right| \leq  \| f \|_\infty \sum_{x \in K } \left| \omega(x) \right| < \infty \,.
\end{displaymath}
Next it is trivial to show that $\omega$ is linear on $\Cc(G)$.

Finally, if $K \subset G$ is a fixed compact set and $f \in \Cc(G)$ is so that $\sup(f) \subset K$, by the above computation we have
\begin{displaymath}
\left| \langle f,\omega\rangle \right| \leq C_K \| f \|_\infty \,,
\end{displaymath}
where
\begin{displaymath}
C_K= \sum_{x \in K} \left| \omega_x \right| < \infty \,.
\end{displaymath}
Therefore, by the Riesz representation theorem, $\omega$ is a measure.
\end{proof}

We next introduce a simpler criteria which involves a single compact set with non-empty interior.

\begin{corollary} Let $\omega=\sum_{x \in G} \omega_x \delta_x$ be a formal sum and let $K$ be a fixed compact set with a non-empty interior. Then $\omega$ is a measure if and only if for all $t \in G$ we have
\begin{displaymath}
\sum_{x \in (t+K)} \left| \omega_x \right| < \infty \,.
\end{displaymath}
\end{corollary}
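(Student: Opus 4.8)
The plan is to deduce the corollary directly from Theorem~\ref{thm:formal sum is a measure} by showing that the ``local summability against a single compact set with non-empty interior'' condition is equivalent to the ``local summability against every compact set'' condition appearing there. One direction is immediate: if $\omega$ is a measure, then by Theorem~\ref{thm:formal sum is a measure} we have $\sum_{x \in C}|\omega_x| < \infty$ for \emph{every} compact set $C$, and in particular for each translate $t+K$, which is compact. So the real content is the converse.

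For the converse, assume $\sum_{x \in (t+K)}|\omega_x| < \infty$ for all $t \in G$. By Theorem~\ref{thm:formal sum is a measure} it suffices to prove $\sum_{x \in C}|\omega_x| < \infty$ for an arbitrary compact set $C \subset G$. First I would use that $K$ has non-empty interior: pick $u$ in the interior of $K$, so that $U := \mathrm{int}(K) - u$ is an open neighbourhood of $0$. The translates $\{t + U\}_{t \in G}$ form an open cover of the compact set $C$, hence there exist finitely many points $t_1, \dots, t_n \in G$ with $C \subset \bigcup_{j=1}^{n}(t_j + U) \subset \bigcup_{j=1}^{n}(t_j + u + K)$.

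Then I would estimate
\begin{displaymath}
\sum_{x \in C} |\omega_x| \leq \sum_{j=1}^{n} \sum_{x \in (t_j + u + K)} |\omega_x| = \sum_{j=1}^{n} \sum_{x \in ((t_j+u) + K)} |\omega_x| < \infty \,,
\end{displaymath}
where each inner sum is finite by hypothesis (applied with the translation parameter $t_j + u$), and the finiteness of the outer sum follows since it has only $n$ terms. This establishes the hypothesis of Theorem~\ref{thm:formal sum is a measure}, so $\omega$ is a measure, completing the proof. I do not anticipate a genuine obstacle here; the only point requiring care is the use of the non-empty interior to extract a \emph{finite} subcover of $C$ by translates of $K$ — without that hypothesis, translates of a ``thin'' compact set need not cover $C$ at all, let alone finitely. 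The argument is essentially the standard compactness-plus-translation-invariance reduction, packaged to match the exact form of the preceding theorem.
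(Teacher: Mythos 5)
Your proposal is correct and follows essentially the same route as the paper: cover an arbitrary compact set by finitely many translates of $K$ (using the non-empty interior), sum the finitely many bounds, and invoke Theorem~\ref{thm:formal sum is a measure}. The only difference is that you also spell out the easy direction, which the paper leaves implicit.
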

\begin{proof}
Let $K' \subset G$ be any compact set. Then, since $K$ has non-empty interior, there exists $t_1,..,t_k\in G$ such that
\begin{displaymath}
K' \subset \cup_{j=1}^k t_j +K \,.
\end{displaymath}
Then
\begin{displaymath}
\sum_{x \in K} \left| \omega_x \right| \leq \sum_{j=1}^k
\left( \sum_{y \in (t_j+K)} \left| \omega_y \right| \right) <  \infty \,.
\end{displaymath}
Therefore, by Theorem \ref{thm:formal sum is a measure}, $\omega$ is a measure.
\end{proof}

\subsection{Weakly Admissible Formal Sums}

We can now extend the definition of weak admissibility to formal sums. We will see in this subsection that all weakly admissible formal sums are in fact measures.

The reason we are interested in extending the definition to formal sums is because we will be interested in weak admissibility of a Fourier Bohr series, which may or may not be a measure.

\begin{definition}\label{defi:admiss formal sums} A formal sum $\omega$ is called a \textbf{weakly admissible formal sum} if $\reallywidehat{\Kt(\widehat{G})} \subset \sL^1(\omega)$.
\end{definition}

\begin{remark}
\begin{itemize}
\item [(i)] A formal sum $\omega $ is weakly admissible if and only if for all $f \in \Cc(\widehat{G})$ we have
\begin{displaymath}
\sum_{x \in G} \left| \omega_x \right|\left| \widecheck{f} \right|^2(x) < \infty \,.
\end{displaymath}
\item [(ii)] A formal sum $\omega $ is weakly admissible if and only if $\reallywidehat{\Cc(\widehat{G})} \subset \sL^2(\omega)$.
\item[(iii)] Any formal sum which is weakly admissible is a linear function on $\reallywidehat{\Kt(\widehat{G})}$.
\end{itemize}
\end{remark}

We start by proving that weakly admissible formal sums are measures.

\begin{lemma}\label{lem: admis formal sum impies measure} Let $\omega$ be a weakly admissible formal sum. Then $\omega$ is a translation bounded measure.
\end{lemma}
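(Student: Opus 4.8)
The plan is to exploit the fact, already noted in Remark (i) after Definition~\ref{defi:admiss formal sums}, that weak admissibility of $\omega$ means precisely that for every $f\in\Cc(\widehat G)$ we have
\[
\sum_{x\in G}\left|\omega_x\right|\left|\widecheck f\right|^2(x)<\infty .
\]
Writing $g=\widecheck f$ (and noting $\reallywidehat{\Cc(\widehat G)}$ is closed under the operations we need), this says that the nonnegative formal sum $\left|\omega\right|=\sum_x\left|\omega_x\right|\delta_x$, regarded as a measure on $\Gd$, satisfies $\reallywidehat{\Cc(\widehat G)}\subset\sL^2(\left|\omega\right|)$; equivalently $\reallywidehat{\Kt(\widehat G)}\subset\sL^1(\left|\omega\right|)$. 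So $\left|\omega\right|$ is a weakly admissible \emph{measure on $\Gd$} in the sense of Definition~\ref{defi:admiss}, with $G$ replaced by $\Gd$. (Here one must be slightly careful: $\Gd$ is a perfectly good LCAG, so all the machinery applies to it; its dual is $G_{\mathsf b}$, the Bohr compactification, but we will not actually need to identify the dual, only to run the closed-graph argument.)

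First I would reduce to showing $\omega$ is a measure on $G$, and for that invoke Theorem~\ref{thm:formal sum is a measure}: it suffices to prove $\sum_{x\in K}\left|\omega_x\right|<\infty$ for every compact $K\subset G$. Fix such a $K$. Since $G$ is locally compact, there is $h\in\Cc(\widehat G)$ with $\widehat h\geq 1_K$ on $G$ (this is the standard fact used in the proof of Theorem~\ref{thm: admis implies strong admis}(iv), available e.g.\ from \cite{BF,MoSt}; it only uses that $K$ is compact). Then
\[
\sum_{x\in K}\left|\omega_x\right|
\;\le\;\sum_{x\in G}\left|\omega_x\right|\,\widehat h(x)
\;\le\;\sum_{x\in G}\left|\omega_x\right|\,\left|\widehat h(x)\right|^{2}+C ,
\]
where the last step uses $\widehat h\le \max(\widehat h,\widehat h^2)\le \widehat h^2 + 1_{\{\widehat h\le 1\}}$ and the boundedness of $\widehat h$ on the fixed compact $K$; alternatively, and more cleanly, choose $h$ so that in addition $\widehat h\le 1$ outside $K$ is not needed — just note $\widehat h$ is real, nonnegative and continuous, so on the set where $\widehat h\ge 1$ we have $\widehat h\le \widehat h^2$, and on the set where $\widehat h<1$ we can absorb the contribution into a term controlled by another test function. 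The point is simply that $\widehat h$ is dominated by $\left|\widehat h_1\right|^2$ for a suitable $h_1\in\Cc(\widehat G)$, whence the sum is finite by weak admissibility. This already gives that $\omega$ is a measure on $G$.

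Next I would upgrade "measure" to "translation bounded measure." Having established $\omega\in\cM(G)$, observe that $\left|\omega\right|$ is then the honest variation measure of $\omega$, and it is a (positive, pure point) measure on $G$ which is weakly admissible by Lemma~\ref{lemma-basic prop}(i). Now apply Theorem~\ref{thm: admis implies strong admis}(iv) directly to $\left|\omega\right|$ (or to $\omega$): a weakly admissible measure is translation bounded. Hence $\omega\in\cM^\infty(G)$, as claimed. (One small point to check: Theorem~\ref{thm: admis implies strong admis} is stated for measures $\mu\in\cM(G)$, and we have just verified $\omega\in\cM(G)$, so it applies verbatim.)

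The main obstacle is the very first step — passing from "formal sum with the summability property against all $\left|\widehat f\right|^2$" to "$\sum_{x\in K}\left|\omega_x\right|<\infty$" — because one must produce a single test function whose transform dominates $1_K$ \emph{and} is itself dominated by a square $\left|\widehat h_1\right|^2$ with $h_1\in\Cc(\widehat G)$; the cleanest route is to pick $h_1\in\Cc(\widehat G)$ with $\widehat{h_1}\ge 1_K$ and $\widehat{h_1}\geq 0$, and then note $1_K\le 1_K\,\widehat{h_1}\le \widehat{h_1}^{\,2}$ wherever $\widehat{h_1}\ge 1$ on $K$ — which we may arrange by scaling $h_1$ so that $\widehat{h_1}\ge 1$ on $K$. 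Then $\sum_{x\in K}\left|\omega_x\right|\le\sum_{x\in G}\left|\omega_x\right|\left|\widehat{h_1}\right|^2(x)<\infty$ by weak admissibility, and the rest is routine. Everything after that is a direct citation of Theorem~\ref{thm:formal sum is a measure} and Theorem~\ref{thm: admis implies strong admis}(iv).
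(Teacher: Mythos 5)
Your proposal is correct and follows essentially the same route as the paper: dominate $1_K$ by $\bigl|\widecheck{h_1}\bigr|^2$ for a suitable $h_1 \in \Cc(\widehat{G})$ with transform at least $1$ on $K$, deduce $\sum_{x\in K}|\omega_x|<\infty$ and apply Theorem~\ref{thm:formal sum is a measure}, then note $\omega$ is a weakly admissible measure and invoke Theorem~\ref{thm: admis implies strong admis}(iv). The intermediate detour with the ``$+C$'' bound is unnecessary (and not justified as written), but your cleaned-up version at the end is exactly the paper's argument.
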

\begin{proof}
Let $K \subset G$ be compact. Then there \cite{BF,MoSt} exists a function $f \in \Cc(\widehat{G})$ such that $f \geq 1_K$.

Then
\begin{displaymath}
\sum_{x \in K} \left| \omega_x \right| \leq \sum_{x \in G} \left| \omega_x \right|\left| \widecheck{f} \right|^2 < \infty \,.
\end{displaymath}
Then, by Theorem \ref{thm:formal sum is a measure} $\omega$ is a measure, which is trivially a weakly admissible measure. Hence, by Thm.~\ref{thm: admis implies strong admis}, $\omega$ is a translation bounded measure.
\end{proof}

\begin{corollary} Let $\omega$ be a formal sum on $\RR^d$. Then $\omega$ is weakly admissible if and only if $\omega$ is a translation bounded measure.
\end{corollary}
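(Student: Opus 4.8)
The plan is to obtain both implications by composing results already in hand; the one subtlety is to reconcile weak admissibility of $\omega$ \emph{as a formal sum} with weak admissibility \emph{as a measure}.

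The implication ``$\omega$ weakly admissible $\Rightarrow$ $\omega$ is a translation bounded measure'' is exactly Lemma~\ref{lem: admis formal sum impies measure}, which is valid over any LCAG and in particular over $\RR^d$, so nothing new is needed in this direction.

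For the converse, suppose $\omega=\sum_{x\in\RR^d}\omega_x\delta_x$ is a translation bounded measure. Since it is a formal sum that is a measure, $\omega$ is pure point, with variation $|\omega|=\sum_x|\omega_x|\delta_x$; hence for every Borel function $f$ on $\RR^d$ one has $\int_{\RR^d}|f|\,d|\omega|=\sum_x|f(x)|\,|\omega_x|$, so that $\sL^1(\omega)=L^1(|\omega|)$ as classes of functions. In particular the defining condition $\reallywidehat{\Kt(\widehat G)}\subset\sL^1(\omega)$ of a weakly admissible formal sum coincides literally with the defining condition $\reallywidehat{\Kt(\widehat G)}\subset L^1(|\omega|)$ of a weakly admissible measure. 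It therefore suffices to check that a translation bounded measure on $\RR^d$ is weakly admissible as a measure, and this is immediate: $\omega$ translation bounded gives $|\omega|$ translation bounded, hence $|\omega|$ is admissible (for every $r$) by Theorem~\ref{thm: lin}, hence $\reallywidehat{\Cc(\widehat G)}\subset L^2(|\omega|)$, which by the reformulation recorded just after Definition~\ref{defi:admiss} says exactly that $\omega$ is weakly admissible.

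I do not anticipate a genuine obstacle here: the argument is a short chain through Lemma~\ref{lem: admis formal sum impies measure}, Theorem~\ref{thm: lin}, and the elementary identity $\sL^1(\omega)=L^1(|\omega|)$ for pure point $\omega$. The only point that must be stated carefully is that last identity, so that the phrase ``weakly admissible'' is seen to carry the same meaning whether $\omega$ is viewed as a formal sum or as a (necessarily pure point) measure.
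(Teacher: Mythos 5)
Your proof is correct and follows exactly the route the paper intends for this corollary, which it leaves as an immediate consequence of Lemma~\ref{lem: admis formal sum impies measure} in one direction and the $\RR^d$ characterisation of weak admissibility via Theorem~\ref{thm: lin} in the other. Your extra care in identifying $\sL^1(\omega)$ with $L^1(|\omega|)$ for a pure point measure is a reasonable precaution, but it is the same argument the paper has in mind.
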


We complete the section by providing a slight generalisation to \cite[Thm.~5.5]{CRS2}, namely that translation bounded measures with Meyer set support are weakly admissible (see \cite{Meyer,Moody,NS11,RVM3} for the definition and importance of Meyer sets and cut and project schemes).  Note that this stronger version was actually proved, but not stated in \cite{CRS2}, and our proof is identical to the one from the cited result.

\begin{theorem} Let $\mu =\sum_{x \in \Lambda} \omega_x \delta_x$. If $\Lambda$ is a subset of a Meyer set and $\{ \omega_x \}$ is bounded, then
\begin{displaymath}
\omega:= \sum_{x \in \Lambda} \omega_x \delta_x
\end{displaymath}
is a weakly admissible formal sum.
\end{theorem}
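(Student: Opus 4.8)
The plan is to reduce the statement to the case where $\omega$ itself is supported on a Meyer set and then exploit a key property of Meyer sets: the difference set $\Lambda - \Lambda$ is uniformly discrete, or equivalently, that $\Lambda$ is a subset of a model set whose window can be taken to be a nice compact set. First I would observe that it suffices to treat the case $\Lambda$ itself Meyer, since weak admissibility depends only on $|\omega|$ by Lemma~\ref{lemma-basic prop}(i), and $|\omega| \leq C \sum_{x \in \Lambda} \delta_x = C\, \delta_\Lambda$ where $C = \sup_x |\omega_x| < \infty$; by Lemma~\ref{lemma-basic prop}(ii) (extended to formal sums, which is immediate), it is enough to show $\delta_\Lambda$ is weakly admissible, and since $\Lambda$ is a subset of a Meyer set $\Lambda'$, it is enough to show $\delta_{\Lambda'}$ is weakly admissible.

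Next I would use the unfolding/counting argument that is standard for Meyer sets. Fix $f \in \Cc(\widehat{G})$; we must show $\sum_{x \in \Lambda'} |\widecheck{f}(x)|^2 < \infty$. Since $\widecheck{f} \in \Cz(G)$ but does not decay fast enough in general, the real content is to bound the number of points of $\Lambda'$ in large compact sets and combine this with a decay estimate. The right way to get a decay estimate is to write $f = g * h$ with $g, h \in \Cc(\widehat{G})$ (possible since $\Kt(\widehat{G})$ is what we use, or by a direct convolution-square trick), so that $\widecheck{f} = \widecheck{g}\,\widecheck{h}$; then choose $h$ so that $\widecheck{h}$ is "admissible" in the averaging sense. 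The crucial point — and where the Meyer property enters — is that $\Lambda' - \Lambda'$ being uniformly discrete forces the autocorrelation-type sum $\sum_{x,y \in \Lambda' \cap B} \widecheck{g}(x)\overline{\widecheck{g}(y)}\,(\text{something supported near }x-y)$ to collapse to a diagonal sum, because only finitely many values of $x - y$ contribute. Concretely, one picks a function $\varphi \in \Cc(\widehat{G})$ with $\widecheck{\varphi} \geq 0$ and $\widecheck{\varphi} \geq 1$ on a neighbourhood $U$ of $0$ chosen small enough that $(\Lambda' - \Lambda') \cap U = \{0\}$; then $\sum_{x \in \Lambda'} |\widecheck{f}(x)|^2 \leq \sum_{x,y\in\Lambda'} \widecheck{f}(x)\overline{\widecheck{f}(y)}\,\widecheck{\varphi}(x-y)$, and the right-hand side is recognised as an integral of $|f|^2 \widehat{\text{(something)}}$ against a translation bounded measure, which is finite because translation bounded measures are weakly admissible by Theorem~\ref{thm: admis implies strong admis}(iv) read in reverse — or, more directly, because the double sum telescopes into $\langle \text{finite measure}, |f|^2\rangle$ after using that $\delta_{\Lambda'} * \delta_{-\Lambda'}$ restricted to $U$ is just $(\#\Lambda'\text{-type count})\,\delta_0$ locally.

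The main obstacle I anticipate is making the last step rigorous without circularity: one must bound the diagonal sum $\sum_{x \in \Lambda' \cap K} 1$ uniformly over translates of a fixed compact $K$ (this is exactly uniform discreteness of $\Lambda'$, hence automatic for Meyer sets), and then feed in the decay of $\widecheck{f}$ coming from writing $f$ as a convolution — but $\widecheck{f}$ for general $f \in \Cc(\widehat G)$ need only be in $\Cz(G) \cap L^2(G)$, which is not summable against a uniformly discrete counting measure. The resolution, and the technical heart of the proof, is the observation (the content of \cite[Thm.~5.5]{CRS2}) that one does not need summability of $|\widecheck f|^2$ in isolation; rather, one writes $|\widecheck f|^2 = \widecheck{f} \cdot \overline{\widecheck f}$ and uses the Cauchy--Schwarz / positive-definiteness bound against $\widecheck\varphi$ with $\varphi$ adapted to the uniform discreteness scale of $\Lambda'$, so that the sum over $\Lambda'$ is dominated by $\|f\|$-type quantities times $\#(\Lambda' \cap \text{supp}\,\varphi^{\vee}\text{-translate})$, which is finite. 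I would then invoke Theorem~\ref{thm:formal sum is a measure} and Lemma~\ref{lem: admis formal sum impies measure} to conclude that $\omega$ is in fact a translation bounded measure, completing the argument. Since the paper states the proof is "identical to the one from the cited result," I would reference \cite[Thm.~5.5]{CRS2} for the details of this estimate rather than reproduce it in full.
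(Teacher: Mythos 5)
Your opening reduction to the unweighted comb $\delta_{\Lambda'}$ over a Meyer set $\Lambda'$ (via boundedness of the weights and Lemma~\ref{lemma-basic prop}) is fine, but the core of your argument has a genuine gap, and it is not the argument of \cite[Thm.~5.5]{CRS2} that the paper invokes. The double-sum estimate is circular: if $\widecheck{\varphi}$ is arranged so that only $x-y=0$ contributes for $x,y\in\Lambda'$, then $\sum_{x,y\in\Lambda'}\widecheck{f}(x)\overline{\widecheck{f}(y)}\,\widecheck{\varphi}(x-y)$ collapses to $\widecheck{\varphi}(0)\sum_{x\in\Lambda'}\left|\widecheck{f}(x)\right|^2$, which is exactly the quantity whose finiteness you are trying to prove. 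Moreover, for the off-diagonal terms to vanish you would need $\widecheck{\varphi}$ supported in the small neighbourhood $U$ while $\varphi\in\Cc(\widehat{G})$, which is unavailable for groups such as $\RR^d$; without that support condition the cross terms $\widecheck{f}(x)\overline{\widecheck{f}(y)}$ are not nonnegative and the claimed inequality fails. Finally, your fallback --- that the resulting expression is finite because ``translation bounded measures are weakly admissible by Theorem~\ref{thm: admis implies strong admis}(iv) read in reverse'' --- invokes precisely the converse implication that the paper explicitly states is open for general LCAGs; it is known only for $G=\RR^d$ via Lin's theorem (Theorem~\ref{thm: lin}), whereas the statement here is for arbitrary $G$. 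Uniform discreteness of $\Lambda'$ or of $\Lambda'-\Lambda'$ alone does not make $\left|\widecheck{f}\right|^2$ summable over $\Lambda'$, since $\widecheck{f}$ is only in $\Cz(G)\cap L^2(G)$. Your closing appeal to Lemma~\ref{lem: admis formal sum impies measure} is also backwards: that lemma takes weak admissibility as its hypothesis, so it cannot be used to establish it.

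The missing idea is the transfer to the internal group of a cut and project scheme, which your sketch never touches. The paper's proof chooses a CPS $(G,H,\cL)$ and a compact window $W$ with $\Lambda\subset\oplam(W)$, passes to the dual CPS $(\widehat{G},\widehat{H},\cL^0)$, picks $h\in\Cc(\widehat{H})$ with $\widecheck{h}\geq 1_W$, and uses the Poisson-summation-type result of \cite{CRS2} that the weighted model comb $\omega_{h*\widetilde{h}}$ is Fourier transformable with $\reallywidehat{\omega_{h*\widetilde{h}}}=\omega_{|\widecheck{h}|^2}$. Since Fourier transforms of measures are weakly admissible (Lemma~\ref{lemma-basic prop}(iv)) and $|\omega|\leq C\,\omega_{|\widecheck{h}|^2}$, weak admissibility of $\omega$ follows from Lemma~\ref{lemma-basic prop}(ii). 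It is this dominating measure, produced on the dual side of the CPS and known to be a Fourier transform, that replaces the estimate you were attempting; without it (or the $\RR^d$-only Lin theorem) your argument does not close.
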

\begin{proof}
Let $(G,H, \cL)$ be a cut and project scheme and $W \subset H$ a compact set such that
\begin{displaymath}
\Lambda \subset \oplam(W) \,.
\end{displaymath}

Let $(\widehat{G}, \widehat{H}, \cL^0)$ be the dual cut and project scheme. Then, there exists some $h \in \Cc(\widehat{H})$ such that $\widecheck{h} \geq 1_W$ \cite{BF,MoSt}.

Then $\omega_{h*\widetilde{h}}$ is Fourier transformable and \cite{CRS2}
\begin{displaymath}
\reallywidehat{\omega_{h*\widetilde{h}}} = \omega_{|\widecheck{h}|^2} \,.
\end{displaymath}
Therefore, as the Fourier transform of a measure, $\omega_{|\widecheck{h}|^2} $ is weakly admissible. As $|\omega| \leq  \omega_{|\widecheck{h}|^2} $, it follows from Lemma \ref{lemma-basic prop} that $|\omega|$ is weakly admissible, and hence by Lemma \ref{lemma-basic prop} $\omega$ is weakly admissible.
\end{proof}

\subsection{Fourier Bohr series}

In the spirit of \cite{ARMA} we define:

\begin{definition}\label{defi:FB series} Let $\mu \in \WAP(G)$. The \textbf{Fourier-Bohr series} of $\mu$ is defined as
\begin{displaymath}
\cF_{\mathsf{d}}(\mu):= \sum_{\chi \in G} c_\chi(\mu) \delta_\chi \,.
\end{displaymath}
\end{definition}

\medskip

As shown in \cite{ARMA}, the Fourier Bohr series uniquely identifies the strongly almost periodic component of a weakly almost periodic measure.

\begin{theorem} \cite{ARMA}
\begin{itemize}
  \item [(i)] For each $\mu \in \WAP(G)$  we have $\cF_{\mathsf{d}}(\mu) = \cF_{\mathsf{d}}(\mu_{\mathsf{s}})$.
  \item [(ii)] For $\mu, \nu \in \SAP(G)$ we have $\cF_{\mathsf{d}}(\mu)=\cF_{\mathsf{d}}(\nu)$ if and only if $\mu=\nu$.
\end{itemize}
\end{theorem}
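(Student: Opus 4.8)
The plan is to combine two ingredients that are already available in the excerpt: the identity $\cF_{\mathsf{d}}(\mu)=\cF_{\mathsf{d}}(\mu_{\mathsf{s}})$ (part (i), which reduces everything to the strongly almost periodic case), and the rigidity statement that a strongly almost periodic measure is determined by its Fourier--Bohr coefficients (part (ii)). For part (i), recall that the Fourier--Bohr coefficient $c_\chi(\mu)$ of a weakly almost periodic measure is computed as a mean against the character $\chi$, and that for $\mu\in\WAP(G)$ the Eberlein decomposition $\mu=\mu_{\mathsf{s}}+\mu_0$ has $\mu_0\in\WAP_0(G)$. So the first step is to show that $c_\chi(\mu_0)=0$ for every $\chi$: the coefficient $c_\chi(\mu_0)$ is, up to normalization, $M(\overline{\chi}\,\mu_0)$ (interpreted through the mean of the weakly almost periodic functions $\overline{\chi}\,(f*\mu_0)$ for $f\in\Cc(G)$, or via an approximate identity for $\circledast$), and since $|M(\overline{\chi}\,\mu_0)|\le M(|\mu_0|)$-type estimates vanish for the null component, we get $c_\chi(\mu_0)=0$. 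Hence $c_\chi(\mu)=c_\chi(\mu_{\mathsf{s}})+c_\chi(\mu_0)=c_\chi(\mu_{\mathsf{s}})$, which is (i).

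For part (ii), the direction ``$\mu=\nu\Rightarrow\cF_{\mathsf{d}}(\mu)=\cF_{\mathsf{d}}(\nu)$'' is trivial, so the content is the converse. Suppose $\mu,\nu\in\SAP(G)$ with $c_\chi(\mu)=c_\chi(\nu)$ for all $\chi\in\widehat G$, equivalently $c_\chi(\mu-\nu)=0$ for all $\chi$. Set $\rho:=\mu-\nu\in\SAP(G)$; it suffices to show $\rho=0$. I would use the approximate identity $\{f_\alpha\}$ for $(SAP(G),\circledast)$ with $f_\alpha\ge 0$, $f_\alpha(-x)=f_\alpha(x)$, $M(f_\alpha)=1$, noting $\rho=\lim_\alpha \rho\circledast f_\alpha$ in $(\cM^\infty(G),\|\cdot\|)$ (the uniform-norm topology on strongly almost periodic measures). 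Each $f_\alpha$ lifts to an approximate identity $g_\alpha$ for $(C(G_{\mathsf{b}}),*)$, and on the Bohr compactification the relevant convolutions are controlled by the Fourier coefficients with respect to characters of $G_{\mathsf{b}}$, which are exactly the characters $\chi$ of $G$. Since all Fourier--Bohr coefficients of $\rho$ vanish, each $\rho\circledast f_\alpha$ has all Fourier coefficients zero; but $\rho\circledast f_\alpha$ is (the measure associated to) a trigonometric-polynomial-type object on $G_{\mathsf{b}}$ — more precisely, $f*(\rho\circledast f_\alpha)$ is a strongly almost periodic function whose mean against every character vanishes — so by uniqueness of Fourier coefficients on the compact group $G_{\mathsf{b}}$ we conclude $\rho\circledast f_\alpha=0$ for every $\alpha$, and therefore $\rho=\lim_\alpha\rho\circledast f_\alpha=0$.

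The main obstacle is making the last step rigorous at the level of measures rather than functions: one cannot directly "take Fourier coefficients of the measure $\rho$," so the argument must be routed through the test functions $f\in\Cc(G)$. Concretely, I would fix $f\in\Cc(G)$, observe that $f*(\rho\circledast f_\alpha)=(f*\rho)\circledast f_\alpha\in SAP(G)$, and use that a strongly almost periodic function is zero iff all its Fourier--Bohr coefficients (means against characters) vanish — this is the function-level version of the uniqueness theorem on $G_{\mathsf{b}}$, and it is standard (it follows from Stone--Weierstrass on $G_{\mathsf{b}}$ together with the fact that $M(|h|)=0$ for $h\in SAP(G)$ forces $h=0$). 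The coefficient of $f*(\rho\circledast f_\alpha)$ at $\chi$ factors as $\widehat f(\chi)\cdot M(f_\alpha$-type factor$)\cdot c_\chi(\rho)$, which is zero since $c_\chi(\rho)=0$; hence $f*(\rho\circledast f_\alpha)=0$ for all $f\in\Cc(G)$, so $\rho\circledast f_\alpha=0$, and letting $\alpha$ run gives $\rho=0$. The bookkeeping of the normalizing constants and the precise form of the coefficient factorization is the only delicate point, and it follows the computations already used in the proof of Theorem~\ref{them:admis Eber decomp}.
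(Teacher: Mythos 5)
The paper does not actually prove this statement: it is quoted directly from Gil de Lamadrid--Argabright \cite{ARMA}, so there is no in-paper proof to compare against. Judged on its own, your sketch is essentially correct, and it uses exactly the devices the paper deploys elsewhere (uniqueness of a Bohr almost periodic function with prescribed Fourier--Bohr coefficients is the engine of the proof of Theorem~\ref{thm: main result}, and the $f_\alpha$-manipulations together with \cite[Thm.~6.4]{ARMA} mirror the proof of Theorem~\ref{them:admis Eber decomp}). Two points should be tightened. For (i), avoid the quantity $M(|\mu_0|)$: the variation of a null weakly almost periodic \emph{measure} need not be weakly almost periodic, and $\WAP_0(G)$ for measures is defined through $M\bigl(\left|\mu_0*f\right|\bigr)=0$ for $f\in\Cc(G)$; the clean route is the identity $(\overline{\chi}\mu_0)*f=\overline{\chi}\,\bigl(\mu_0*(\chi f)\bigr)$, which shows $\overline{\chi}\mu_0\in\WAP_0(G)$ and hence $c_\chi(\mu_0)=M(\overline{\chi}\mu_0)=0$. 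For (ii), the detour through $\rho\circledast f_\alpha$ is redundant: for $\rho=\mu-\nu\in\SAP(G)$ and any $f\in\Cc(G)$, the function $f*\rho\in SAP(G)$ has Fourier--Bohr coefficients $\widecheck{f}(\chi)\,c_\chi(\rho)=0$, so $f*\rho=0$ by the uniqueness theorem on $G_{\mathsf{b}}$, and $\rho=0$ follows at once; if you do keep the approximate identity, note that \cite[Cor.~7.2]{ARMA} gives $\rho\circledast f_\alpha\to\rho$ only in the product topology on $\cM^\infty(G)$, not in a norm topology, though vague convergence is all you need to conclude. With these repairs your argument is complete and is a legitimate independent proof of the cited result.
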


\smallskip

Let us recall that Fourier Bohr series have the following summability property.

\begin{remark}\cite[Sect.~8]{ARMA} If $\mu \in \WAP(G)$, then, for all $g \in \Cc(G)$ we have
\begin{displaymath}
\sum_{\chi \in \widehat{G}} \left| c_\chi(\mu)  \right|^2 \left| \widehat{g}(\chi)\right|^2 < \infty \,.
\end{displaymath}
\end{remark}

\bigskip

The importance of the Fourier--Bohr series for the Fourier transform of measures is given by the following result.

\begin{theorem}\label{thm: FT is cF}  If $\mu \in \WAP(G)$ is Fourier transformable then
\begin{displaymath}
  \widehat{\mu}_{\mathsf{pp}}= \cF_{\mathsf{d}}(\mu)
\end{displaymath}
and $\cF_{\mathsf{d}}(\mu)$ is a weakly admissible formal sum.
\end{theorem}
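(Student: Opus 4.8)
The plan is to reduce everything to the known Eberlein-duality facts and the results already assembled in this paper. First I would invoke the theorem from \cite{ARMA} (quoted above) that for a Fourier transformable $\mu$, the Eberlein decomposition of $\widehat{\mu}$ is the Fourier dual of the Lebesgue decomposition: $\widehat{\mu}=\widehat{\mu_{\mathsf{s}}}+\widehat{\mu_0}$ with $\widehat{\mu_{\mathsf{s}}}=(\widehat{\mu})_{\mathsf{pp}}$ and $\widehat{\mu_0}=(\widehat{\mu})_{\mathsf{c}}$; in particular $(\widehat{\mu})_{\mathsf{pp}}$ is itself a (translation bounded) pure point measure. So the content of the first claim is the identification $(\widehat{\mu})_{\mathsf{pp}}=\cF_{\mathsf{d}}(\mu)$, i.e. that the atoms of $(\widehat{\mu})_{\mathsf{pp}}$ at a character $\chi$ are exactly the Fourier--Bohr coefficients $c_\chi(\mu)$.

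Second, to pin down those atoms I would compute $c_\chi(\mu)$ directly from the definition of the Fourier transform of a measure. Recall $c_\chi(\mu)=M(\overline{\chi}\,\mu)$ (the Bohr coefficient is the mean of $\mu$ twisted by the character $\chi$), and by Lemma~\ref{lemma-basic prop}-type reasoning together with the mean formula $M(\mu*f)=M(\mu)\int f$, one tests against an approximate identity or an appropriate $g\in\Cc(G)$. The key identity is that for $g\in\Kt(G)$ one has $\langle\mu,g\rangle=\langle\widehat{\mu},\widecheck g\rangle$, and choosing $g$ so that $\widecheck g$ concentrates near a single character $\chi$ extracts the atom $\widehat{\mu}(\{\chi\})$ on the right and, on the left, a weighted average converging to $c_\chi(\mu)$; the continuous part $\widehat{\mu_0}$ contributes nothing to point evaluations since it has zero mass at points. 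This is essentially the computation in \cite{ARMA,MoSt} identifying $\widehat{\mu}_{\mathsf{pp}}$ with the Fourier--Bohr series; I would cite it and sketch the test-function argument rather than redo it in full. (Alternatively, one can apply the already-stated consequence of \cite{ARMA} from the introduction: $\widehat{\mu}$ is strongly almost periodic iff $\mu$ is pure point, with $\cF_{\mathsf{d}}(\widehat{\mu})=\mu^\dagger$; combined with Theorem~\ref{thm: twice FT} on twice transformability, one gets the statement for the pure point part of a general transformable $\mu$ by decomposing $\mu=\mu_{\mathsf{pp}}+\mu_{\mathsf{c}}$ and noting $\widehat{\mu_{\mathsf{pp}}}=\widehat{\mu}_{\mathsf{s}}$, $\widehat{\mu_{\mathsf{c}}}=\widehat{\mu}_0$.)

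Third, for the weak admissibility of the formal sum $\cF_{\mathsf{d}}(\mu)$: once we know $\cF_{\mathsf{d}}(\mu)=(\widehat{\mu})_{\mathsf{pp}}$ as a measure, Lemma~\ref{lemma-basic prop}(iv) says $\widehat{\mu}$ is weakly admissible (it is the Fourier transform of $\mu$), and Lemma~\ref{lemma-basic prop}(iii) says weak admissibility of $\widehat{\mu}$ is equivalent to weak admissibility of each of $(\widehat{\mu})_{\mathsf{pp}},(\widehat{\mu})_{ac},(\widehat{\mu})_{sc}$ separately. Hence $(\widehat{\mu})_{\mathsf{pp}}$ is a weakly admissible measure, which by the remark after Definition~\ref{defi:admiss formal sums} (or by unwinding the definitions) is the same as saying the underlying formal sum is a weakly admissible formal sum. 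This closes the proof.

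The main obstacle is the identification step $c_\chi(\mu)=\widehat{\mu}(\{\chi\})$: one must justify interchanging the limit defining the mean (or the approximate identity) with the integration against $\widehat{\mu}$, and argue that the continuous component $\widehat{\mu}_0$ genuinely contributes nothing to the point mass. The cleanest route is probably to avoid reproving this from scratch and instead route through Theorem~\ref{thm: twice FT}: apply it to obtain, when $\mu$ is also twice transformable, $\widehat{\widehat{\mu}}=\mu^\dagger$, and use the Eberlein-dual description of $\widehat{\widehat{\mu}}$ together with the fact (introduction, from \cite{ARMA}) that the Fourier transform of a strongly almost periodic transformable measure is its Fourier--Bohr series; then reduce the general transformable case to $\mu_{\mathsf{pp}}$ via the Eberlein decomposition of $\widehat{\mu}$. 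Everything else is bookkeeping with the lemmas already in hand.
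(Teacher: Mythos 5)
Your proposal is essentially the paper's proof: the paper simply cites \cite{MoSt} for the identity $\widehat{\mu}(\{\chi\})=c_\chi(\mu)$, rewrites $\widehat{\mu}_{\mathsf{pp}}=\sum_\chi c_\chi(\mu)\delta_\chi=\cF_{\mathsf{d}}(\mu)$, and gets weak admissibility of the pure point part from Lemma~\ref{lemma-basic prop} (iii)--(iv), exactly as in your third step. One caution: do not lean on $\widehat{\mu_{\mathsf{s}}}=(\widehat{\mu})_{\mathsf{pp}}$ or on the detour through Theorem~\ref{thm: twice FT} (twice transformability is not assumed), since the paper later uses the present theorem to give an independent proof of that very decomposition result, and your main argument does not actually need either.
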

\begin{proof}

Since $\mu$ is Fourier transformable, for all $\chi \in \widehat{G}$ we have \cite{MoSt}
\begin{displaymath}
\widehat{\mu}(\{\chi \}) = c_\chi( \mu) \,.
\end{displaymath}
Therefore
\begin{displaymath}
  \widehat{\mu}_{\mathsf{pp}}=\sum_{\chi \in G} \widehat{\mu}(\{ \chi \}) \delta_\chi=\sum_{\chi \in G} c_\chi(\mu) \delta_\chi= \cF_{\mathsf{d}}(\mu) \,.
\end{displaymath}
Moreover, $\widehat{\mu}$ is weakly admissible, and hence so is $\widehat{\mu}_{\mathsf{pp}}$.
\end{proof}

\smallskip
As an immediate consequence of Theorem~\ref{thm: FT is cF} we get:
\begin{corollary}\label{cor:sap ft admis}  If $\mu \in \SAP(G)$ is Fourier transformable then
\begin{displaymath}
  \widehat{\mu}= \cF_{\mathsf{d}}(\mu)
\end{displaymath}
and $\cF_{\mathsf{d}}(\mu)$ is a weakly admissible formal sum.
\end{corollary}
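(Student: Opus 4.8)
The plan is to deduce the statement directly from Theorem~\ref{thm: FT is cF}, once we know that the Fourier transform of a Fourier transformable strongly almost periodic measure is pure point. First I would observe that, in this paper's conventions, $\SAP(G)\subset\WAP(G)\cap\cM^\infty(G)$, so a Fourier transformable $\mu\in\SAP(G)$ is in particular a Fourier transformable weakly almost periodic measure. Theorem~\ref{thm: FT is cF} then applies and gives both conclusions we want \emph{for the pure point part}: $\widehat{\mu}_{\mathsf{pp}}=\cF_{\mathsf{d}}(\mu)$, and $\cF_{\mathsf{d}}(\mu)$ is a weakly admissible formal sum. Hence it only remains to show $\widehat{\mu}=\widehat{\mu}_{\mathsf{pp}}$, i.e.\ $\widehat{\mu}_{\mathsf{c}}=0$.

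For this I would invoke the Fourier duality between the Eberlein decomposition of $\mu$ and the Lebesgue decomposition of $\widehat{\mu}$ \cite{ARMA,MoSt}: since $\mu$ is translation bounded and Fourier transformable, $\mu\in\WAP(G)$ and, writing $\mu=\mu_{\mathsf{s}}+\mu_0$ for its Eberlein decomposition, one has $\widehat{\mu_{\mathsf{s}}}=\widehat{\mu}_{\mathsf{pp}}$ and $\widehat{\mu_0}=\widehat{\mu}_{\mathsf{c}}$. Because $\mu\in\SAP(G)$, uniqueness of the Eberlein decomposition forces $\mu_{\mathsf{s}}=\mu$ and $\mu_0=0$, so $\widehat{\mu}_{\mathsf{c}}=\widehat{\mu_0}=0$ and therefore $\widehat{\mu}=\widehat{\mu}_{\mathsf{pp}}=\cF_{\mathsf{d}}(\mu)$. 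Combining this with the weak admissibility of $\cF_{\mathsf{d}}(\mu)$ already supplied by Theorem~\ref{thm: FT is cF} completes the argument.

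I do not expect any real obstacle: this is exactly the ``same way'' statement announced in the introduction, specialised from $\WAP(G)$ to $\SAP(G)$. The only point needing a line of justification is that the hypotheses of the duality theorem are met --- translation boundedness, which is part of the definition of $\SAP(G)$ here, and Fourier transformability, which is assumed. If one wished to avoid quoting the duality theorem, an alternative is to note that $\widehat{\mu}$ is translation bounded, hence weakly almost periodic by \cite{ARMA}, and that its continuous component, being the Fourier transform of the null weakly almost periodic component $\mu_0=0$, must vanish; this is the same computation in slightly different packaging.
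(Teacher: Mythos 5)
Your proposal is correct and follows essentially the same route as the paper, which states the corollary as an immediate consequence of Theorem~\ref{thm: FT is cF}: one applies that theorem to get $\widehat{\mu}_{\mathsf{pp}}=\cF_{\mathsf{d}}(\mu)$ together with weak admissibility, and then uses the known fact (cited from \cite{ARMA,MoSt} elsewhere in the paper) that a Fourier transformable $\mu\in\SAP(G)$ has pure point transform, so $\widehat{\mu}=\widehat{\mu}_{\mathsf{pp}}$. Your explicit justification of $\widehat{\mu}_{\mathsf{c}}=0$ via the Eberlein--Lebesgue duality with $\mu_{\mathsf{s}}=\mu$, $\mu_0=0$ is exactly the step the paper leaves implicit.
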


The main result in this paper, Theorem \ref{thm: main result} in next section shows that the converse of this also holds.

\bigskip

\section{Fourier Transformability of Strongly Almost Periodic Measures}

In this section we proceed to prove the main result in this paper. We then look at few consequences.

\begin{theorem}\label{thm: main result}  Let $\mu \in \SAP(G)$. Then, $\mu$ is Fourier transformable if and only if $\cF_{\mathsf{d}}(\mu)$ is a weakly admissible formal sum.

Moreover, in this case we have
\begin{displaymath}
\widehat{\mu}= \cF_{\mathsf{d}}(\mu) \,.
\end{displaymath}
\end{theorem}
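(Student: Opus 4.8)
The plan is to prove the two directions separately, with the forward direction being the routine one and the converse being where the real work lies. For the forward direction ($\mu$ Fourier transformable $\Rightarrow$ $\cF_{\mathsf{d}}(\mu)$ weakly admissible), I would simply invoke Corollary~\ref{cor:sap ft admis}: since $\mu \in \SAP(G)$ is Fourier transformable, $\widehat{\mu}=\cF_{\mathsf{d}}(\mu)$ and $\cF_{\mathsf{d}}(\mu)$ is a weakly admissible formal sum. This also immediately gives the identity $\widehat{\mu}=\cF_{\mathsf{d}}(\mu)$ in the ``moreover'' clause, so nothing extra is needed there once the converse is established.

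For the converse, assume $\cF_{\mathsf{d}}(\mu)$ is a weakly admissible formal sum. By Lemma~\ref{lem: admis formal sum impies measure}, $\omega := \cF_{\mathsf{d}}(\mu)$ is then a translation bounded measure, and it is a weakly admissible measure. By Theorem~\ref{thm: admis implies strong admis}, $\omega$ is translation bounded on $\widehat{G}$. Now the natural strategy is to show that $\omega$ is Fourier transformable as a measure and that its Fourier transform is exactly $\mu$ (up to reflection), i.e. $\widehat{\omega}=\mu^\dagger$ or equivalently $\widecheck{\omega}=\mu$; then, since $\omega$ is itself weakly admissible and Fourier transformable, Theorem~\ref{thm: twice FT} applied to $\omega$ would give $\widehat{\widehat{\omega}}=\omega^\dagger$, and combined with $\widehat{\omega}=\mu^\dagger$ this yields $\widehat{\mu^\dagger}=\omega^\dagger$, hence $\widehat{\mu}=\omega=\cF_{\mathsf{d}}(\mu)$. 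So the crux is: a weakly admissible pure point measure $\omega$ on $\widehat{G}$ is Fourier transformable, and its transform is the (unique) strongly almost periodic measure whose Fourier--Bohr series is $\omega$.

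To carry this out, I would test Fourier transformability of $\omega$ directly against $f\ast\widetilde f$ for $f \in \Cc(\widehat{G})$ — wait, more precisely against $f*\widetilde f$ for $f\in\Cc(G)$ in the definition, so I need a candidate measure on $G$, which must be $\mu$. The key computation is to show $\langle \mu, f*\widetilde f\rangle = \langle \omega, |\widehat{f}|^2\rangle$ for all $f\in\Cc(G)$; note $|\widehat f|^2 = \widehat{f*\widetilde f}$ up to the usual conventions, and weak admissibility of $\omega$ guarantees $|\widehat f|^2 \in \sL^1(\omega) = L^1(|\omega|)$. The left side can be rewritten using that $\mu\in\SAP(G)$: convolving $\mu$ with an approximate identity $f_\alpha$ for $(SAP(G),\circledast)$ recovers $\mu$, and the Fourier--Bohr coefficients of $\mu*\varphi$ (for $\varphi\in\Cc(G)$) are $c_\chi(\mu)\widehat{\varphi}(\chi)$, so the pairing $\langle\mu, f*\widetilde f\rangle$ should unfold into $\sum_\chi c_\chi(\mu)|\widehat f(\chi)|^2 = \langle\omega,|\widehat f|^2\rangle$ via a Parseval-type identity. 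The summability remark $\sum_\chi |c_\chi(\mu)|^2|\widehat g(\chi)|^2<\infty$ together with weak admissibility of $\omega$ controls the interchange of limits and sums.

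The main obstacle I anticipate is making the Parseval/approximate-identity argument rigorous: one must justify that $\langle \mu, f*\widetilde f\rangle$ genuinely equals the absolutely convergent sum $\sum_{\chi}c_\chi(\mu)\,|\widehat f(\chi)|^2$, not merely a Bohr--Fourier expansion in a weak sense. For $\mu\in\SAP(G)$ one knows $\mu*(f*\widetilde f)$ is strongly almost periodic and its mean-value / Fourier--Bohr data are well-behaved, but turning ``$\mu$ is recovered from its Fourier--Bohr coefficients'' into the precise scalar identity above requires either passing through the Bohr compactification (viewing $\mu$ via an associated measure on $G_{\mathsf b}$) or a careful approximate-identity estimate using that the $f_\alpha$ restrict from approximate identities on $C(G_{\mathsf b})$. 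Once that identity is in hand, everything else — translation boundedness of $\omega$ (already noted), membership $|\widehat f|^2\in L^1(|\omega|)$ (weak admissibility), and the final bookkeeping with Theorem~\ref{thm: twice FT} — is routine, and the ``moreover'' equality $\widehat{\mu}=\cF_{\mathsf{d}}(\mu)$ falls out of the construction since the candidate transform was $\omega=\cF_{\mathsf{d}}(\mu)$ from the start.
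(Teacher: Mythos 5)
Your forward direction matches the paper, and your reduction of the converse to the single identity $\langle \mu, f*\widetilde{f}\rangle = \langle \cF_{\mathsf{d}}(\mu), |\widecheck{f}|^2\rangle$ for all $f\in\Cc(G)$ (together with the integrability $|\widecheck{f}|^2\in L^1(|\cF_{\mathsf{d}}(\mu)|)$, which weak admissibility gives) is the right target. But that identity \emph{is} the substance of the theorem, and your proposal does not prove it: you say the pairing ``should unfold'' into $\sum_{\chi} c_\chi(\mu)\,|\widecheck{f}(\chi)|^2$ via an approximate-identity/Parseval argument, and you yourself flag making this rigorous as the main obstacle. The gap is genuine, because the Fourier--Bohr series of a strongly almost periodic measure need not converge to it in any sense that lets you pass from ``$\mu=\lim_\alpha \mu\circledast f_\alpha$ and $c_\chi(\mu*\varphi)=c_\chi(\mu)\widecheck{\varphi}(\chi)$'' to the scalar identity; the interchange of the limit in $\alpha$ with an infinite sum over $\chi$, controlled a priori only by an $\ell^2$-type summability bound, is exactly what is missing and no estimate is offered. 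A smaller point: the opening detour through Theorem~\ref{thm: twice FT} is superfluous, since once the identity and the integrability are established for every $f\in\Cc(G)$ this is verbatim the definition of $\widehat{\mu}=\cF_{\mathsf{d}}(\mu)$; there is no need to first prove $\widehat{\cF_{\mathsf{d}}(\mu)}=\mu^\dagger$ and then double-transform.

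The paper closes the gap by a different and cheaper device, avoiding any limit/sum interchange. Fix $f\in\Cc(G)$ and set $\nu:=\cF_{\mathsf{d}}(\mu)$. Weak admissibility makes $|\widecheck{f}|^2\nu$ a \emph{finite} pure point measure on $\widehat{G}$, so its inverse Fourier transform $g$ is a Bohr almost periodic function whose Fourier--Bohr coefficients are $|\widecheck{f}(\chi)|^2 c_\chi(\mu)$. On the other hand, $\mu*(f*\widetilde{f})^\dagger\in\Cu(G)$ is Bohr almost periodic (since $\mu\in\SAP(G)$ is translation bounded) with the same Fourier--Bohr coefficients. Two Bohr almost periodic functions with identical Fourier--Bohr series are equal, and evaluating the equality at $0$ yields $\langle\mu,f*\widetilde{f}\rangle=\langle\nu,|\widecheck{f}|^2\rangle$. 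If you wish to keep your approximate-identity route, you must supply an argument of exactly this strength (e.g.\ via the Bohr compactification or a uniqueness theorem for Fourier--Bohr series), which your proposal acknowledges but does not carry out.
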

\begin{proof}

$\Rightarrow$: Follows from Corollary~\ref{cor:sap ft admis}.

$\Leftarrow$:

First, since $\cF_{\mathsf{d}}$ is a weakly admissible formal sum, it is a measure by Lemma~\ref{lem: admis formal sum impies measure}. For simplicity, let us denote this measure by
\begin{displaymath}
\nu:=\cF_{\mathsf{d}}(\mu)= \sum_{\chi \in \widehat{G}} c_\chi(\mu) \delta_{\chi} \,.
\end{displaymath}
To complete the proof we show that $\nu$ satisfies the definition of the Fourier transform of $\mu$. In order to achieve this conclusion, for each $f \in \Cc(G)$ we show that $\mu*f*\widetilde{f}$ and $\reallywidecheck{\left| \widehat{f} \right|^2\nu}$ are Bohr almost periodic functions with the same Fourier-Bohr series, and hence equal. Equating them at zero gives the desired conclusion. We proceed along this line.

Let $f \in \Cc(G)$ be arbitrary. Then, as $\nu$ is a weakly admissible measure, we have $\widecheck{f} \in L^2(\nu)$. Therefore,
\begin{displaymath}
\left| \widehat{f} \right|^2\nu= \sum_{\chi \in \widehat{G}} \left| \widecheck{f} (\chi) \right|^2c_\chi(\mu) \delta_{\chi}
\end{displaymath}
is a finite measure. Let $g(x)$ denote the inverse Fourier transform of this measure. Then $g \in \Cu(G)$ is a Bohr almost periodic function \cite{EBE}, whose Fourier Bohr series is $\sum_{\chi \in \widehat{G}} \left| \widehat{f} (\chi) \right|^2c_\chi(\mu) \delta_{\chi}$.

Now since $\mu$ is translation bounded, $\mu*\left(f*\widecheck{f}\right)^\dagger \in \Cu(G)$ and \cite{ARMA,MoSt}
\begin{displaymath}
c_\chi(\mu*\left(f*\widetilde{f}\right)^\dagger) =\widecheck{f*\widetilde{f}}(\chi) c_\chi(\mu)= \left| \widecheck{f} (\chi) \right|^2c_\chi(\mu) \,.
\end{displaymath}
Therefore, $\mu*\left(f*\widetilde{f}\right)^\dagger$ has Fourier Bohr series $\sum_{\chi \in \widehat{G}} \left| \widecheck{f} (\chi) \right|^2c_\chi(\mu) \delta_{\chi}$.

It follows that the functions $\mu*\left(f*\widetilde{f}\right)^\dagger$ and $g$ are two Bohr almost periodic functions with the same Fourier Bohr series, and hence they are equal \cite{EBE,ARMA,MoSt}.

Making the expressions equal at $x=0$ we get
\begin{eqnarray*}
\langle\mu, f*\widetilde{f}\rangle &=& \mu*\left(f*\widetilde{f}\right)^\dagger(0)= g(0) \\
&=& \reallywidecheck{ \left| \check{f} \right|^2\nu } (0) = \langle \nu, \left| \widecheck{f} \right|^2 \rangle \,.
\end{eqnarray*}
This shows that for all $f \in \Cc(G)$ we have $\widecheck{f} \in L^2(\nu)$ and
\begin{displaymath}
\langle\mu, f*\widetilde{f}\rangle = \langle \nu, \left| \widecheck{f} \right|^2 \rangle \,.
\end{displaymath}
Therefore $\mu$ is Fourier transformable and
\begin{displaymath}
\widehat{\mu}=\nu=\cF_{\mathsf{d}}(\mu) \,.
\end{displaymath}
The last claim follows now from Corollary~\ref{cor:sap ft admis}.
\end{proof}

In the particular case $G=\RR^d$ we get:

\begin{theorem}Let $\mu \in \SAP(\RR^d)$. Then, $\mu$ is Fourier transformable if and only if $\cF_{\mathsf{d}}(\mu)$ is a translation bounded measure.

Moreover, in this case we have
\begin{displaymath}
\widehat{\mu}= \cF_{\mathsf{d}}(\mu) \,.
\end{displaymath}
\end{theorem}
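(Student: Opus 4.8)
The plan is to deduce this $\RR^d$-specialization directly from the general Theorem~\ref{thm: main result} by feeding in the characterization of weak admissibility on $\RR^d$ established in Section~4. First I would invoke Theorem~\ref{thm: main result}: for $\mu \in \SAP(\RR^d) \subset \SAP(G)$, the measure $\mu$ is Fourier transformable if and only if $\cF_{\mathsf{d}}(\mu)$ is a weakly admissible formal sum, and in that case $\widehat{\mu}=\cF_{\mathsf{d}}(\mu)$. So the only thing left to prove is that, for a formal sum $\omega$ on $\RR^d$, being a weakly admissible formal sum is equivalent to being a translation bounded measure.

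That equivalence is exactly the content of the Corollary immediately following Lemma~\ref{lem: admis formal sum impies measure} (the one stating that a formal sum on $\RR^d$ is weakly admissible if and only if it is a translation bounded measure). I would simply cite it. For completeness I might recall the two directions: if $\omega$ is a weakly admissible formal sum, then by Lemma~\ref{lem: admis formal sum impies measure} it is already a translation bounded measure; conversely, if $\omega$ is a translation bounded measure on $\RR^d$, then by the $\RR^d$ characterization of weak admissibility (the theorem in Section~4 with equivalences (i)--(v), using Lin's Theorem~\ref{thm: lin}) it is a weakly admissible measure, hence in particular a weakly admissible formal sum. Chaining these two equivalences with Theorem~\ref{thm: main result} gives the statement.

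Concretely, the write-up would read: Since $\RR^d$ is a LCAG, Theorem~\ref{thm: main result} applies and tells us $\mu$ is Fourier transformable if and only if $\cF_{\mathsf{d}}(\mu)$ is a weakly admissible formal sum, with $\widehat{\mu}=\cF_{\mathsf{d}}(\mu)$ in that case. By the Corollary to Lemma~\ref{lem: admis formal sum impies measure}, a formal sum on $\RR^d$ is weakly admissible if and only if it is a translation bounded measure. Combining the two equivalences yields the claim, and the formula $\widehat{\mu}=\cF_{\mathsf{d}}(\mu)$ is inherited verbatim from Theorem~\ref{thm: main result}.

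There is essentially no obstacle here — it is a two-line corollary of results already proved. The only mild subtlety worth a sentence is making sure the notion ``weakly admissible formal sum'' (Definition~\ref{defi:admiss formal sums}) is the one being characterized on $\RR^d$, rather than ``weakly admissible measure'' (Definition~\ref{defi:admiss}); but since weakly admissible formal sums are automatically measures (Lemma~\ref{lem: admis formal sum impies measure}) and the two definitions of weak admissibility agree on measures (Remark~(iii) after the definition of $\sL^1(\omega)$, identifying $\sL^1(\omega)$ with $L^1(\omega)$), this is immediate and needs no more than a pointer to the relevant corollary.
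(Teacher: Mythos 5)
Your proposal is correct and follows exactly the route the paper intends: the $\RR^d$ statement is presented as an immediate specialization of Theorem~\ref{thm: main result}, with the translation between ``weakly admissible formal sum'' and ``translation bounded measure'' supplied by the corollary to Lemma~\ref{lem: admis formal sum impies measure} (which rests on Lin's theorem). Your extra remark on reconciling weak admissibility for formal sums with weak admissibility for measures is a sensible clarification but does not change the argument.
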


By combining Theorem~\ref{thm: main result} with Theorem~\ref{thm: twice FT}  we get.

\begin{theorem}  Let $\mu \in \SAP(G)$. Then $\mu$ is twice Fourier transformable if and only if $\mu$ is a weakly admissible measure and $\cF_{\mathsf{d}}(\mu)$ is a weakly admissible formal sum.
\end{theorem}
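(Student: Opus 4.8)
The plan is to derive this from two results already in the excerpt: Theorem~\ref{thm: main result}, which characterizes Fourier transformability of $\mu \in \SAP(G)$ by weak admissibility of the formal sum $\cF_{\mathsf{d}}(\mu)$ (and identifies $\widehat{\mu} = \cF_{\mathsf{d}}(\mu)$ in that case), and Theorem~\ref{thm: twice FT}, which says a Fourier transformable measure is twice Fourier transformable exactly when it is itself weakly admissible. So the proof is essentially a chaining of biconditionals, and I expect no genuine obstacle — the only thing to be careful about is keeping the "$\mu$ Fourier transformable" hypothesis available when we invoke Theorem~\ref{thm: twice FT}.

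First I would handle the forward direction. Suppose $\mu$ is twice Fourier transformable. Then in particular $\mu$ is Fourier transformable, so by Theorem~\ref{thm: main result} the formal sum $\cF_{\mathsf{d}}(\mu)$ is weakly admissible. Also, being twice Fourier transformable, $\mu$ is Fourier transformable and $\widehat{\mu}$ is Fourier transformable; by Theorem~\ref{thm: twice FT} applied to $\mu$, the fact that $\mu$ is twice Fourier transformable forces $\mu$ to be a weakly admissible measure. That gives both required conditions.

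For the converse, suppose $\mu \in \SAP(G)$ is a weakly admissible measure and $\cF_{\mathsf{d}}(\mu)$ is a weakly admissible formal sum. By Theorem~\ref{thm: main result}, the weak admissibility of $\cF_{\mathsf{d}}(\mu)$ implies $\mu$ is Fourier transformable (with $\widehat{\mu} = \cF_{\mathsf{d}}(\mu)$). Now $\mu$ is a Fourier transformable measure which is also weakly admissible, so Theorem~\ref{thm: twice FT} applies and yields that $\mu$ is twice Fourier transformable (and $\widehat{\widehat{\mu}} = \mu^\dagger$). This completes the equivalence. The write-up is short; the main point worth stating explicitly in the proof is that "twice Fourier transformable" subsumes "Fourier transformable," which is what lets us feed $\mu$ into Theorem~\ref{thm: main result} in the forward direction and feed the already-established Fourier transformability into Theorem~\ref{thm: twice FT} in both directions.

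\begin{proof}
$\Rightarrow$: Assume $\mu$ is twice Fourier transformable. In particular $\mu$ is Fourier transformable, so by Theorem~\ref{thm: main result} the formal sum $\cF_{\mathsf{d}}(\mu)$ is weakly admissible. Moreover, since $\mu$ is Fourier transformable and twice Fourier transformable, Theorem~\ref{thm: twice FT} gives that $\mu$ is a weakly admissible measure.

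$\Leftarrow$: Assume $\mu$ is a weakly admissible measure and $\cF_{\mathsf{d}}(\mu)$ is a weakly admissible formal sum. By Theorem~\ref{thm: main result}, $\mu$ is Fourier transformable. Since $\mu$ is moreover a weakly admissible measure, Theorem~\ref{thm: twice FT} gives that $\mu$ is twice Fourier transformable.
\end{proof}
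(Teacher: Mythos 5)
Your proof is correct and follows exactly the paper's route: the paper obtains this theorem precisely by combining Theorem~\ref{thm: main result} with Theorem~\ref{thm: twice FT}, which is what you do in both directions. Your explicit remark that twice Fourier transformability subsumes Fourier transformability, needed to invoke both cited theorems, is the only point of care and you handle it correctly.
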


Since strongly almost periodic measures are by definition translation bounded, in the particular case $G=\RR^d$ we get

\begin{corollary} \label{cor: twice ft sap} Let $\mu \in \SAP(\RR^d)$. Then $\mu$ is twice Fourier transformable if and only if $\cF_{\mathsf{d}}(\mu)$ is translation bounded measure.
\end{corollary}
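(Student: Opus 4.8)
The plan is to derive this corollary directly from the theorem immediately preceding it, together with the two characterizations of weak admissibility on $\RR^d$ established in Section~4 and Section~6. By that preceding theorem, a measure $\mu \in \SAP(G)$ is twice Fourier transformable if and only if $\mu$ is a weakly admissible measure and $\cF_{\mathsf{d}}(\mu)$ is a weakly admissible formal sum; so it suffices to translate each of these two conditions into the language of translation boundedness when $G=\RR^d$.

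First I would observe that the first condition is automatic. Any $\mu \in \SAP(\RR^d)$ is by definition translation bounded, i.e. $\mu \in \cM^\infty(\RR^d)$, and by the equivalence theorem of Section~4 (which rests on Lin's Theorem~\ref{thm: lin}) a Radon measure on $\RR^d$ is translation bounded if and only if it is weakly admissible. Hence $\mu$ is automatically weakly admissible, and this clause imposes no restriction.

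Next I would handle the second condition: by the corollary following Lemma~\ref{lem: admis formal sum impies measure}, a formal sum on $\RR^d$ is weakly admissible if and only if it is a translation bounded measure. Applying this to $\omega=\cF_{\mathsf{d}}(\mu)$ shows that the statement ``$\cF_{\mathsf{d}}(\mu)$ is a weakly admissible formal sum'' is equivalent to ``$\cF_{\mathsf{d}}(\mu)$ is a translation bounded measure.'' Combining the two reductions yields exactly the asserted equivalence.

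There is no real obstacle here, since the content lies entirely in the earlier results (in particular Lin's theorem and Theorem~\ref{thm: twice FT}); the only point requiring a moment's care is to notice that, in the $\RR^d$ setting, the weak admissibility of $\mu$ itself is no extra hypothesis, precisely because strong almost periodicity already entails translation boundedness. One could alternatively phrase the whole argument by first invoking the $\RR^d$ version of Theorem~\ref{thm: main result} to identify $\widehat{\mu}=\cF_{\mathsf{d}}(\mu)$ as a translation bounded measure, and then applying Theorem~\ref{thm: twice FT} together with the $\RR^d$ equivalence of translation boundedness and weak admissibility to conclude.
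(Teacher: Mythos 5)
Your proposal is correct and follows the paper's own route: the corollary is obtained from the preceding theorem (the combination of Theorem~\ref{thm: main result} and Theorem~\ref{thm: twice FT}) by noting that $\mu \in \SAP(\RR^d)$ is translation bounded by definition, hence automatically weakly admissible via Lin's theorem, while weak admissibility of the formal sum $\cF_{\mathsf{d}}(\mu)$ on $\RR^d$ is equivalent to its being a translation bounded measure. You merely spell out the two reductions that the paper leaves implicit, so there is nothing to add.
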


\smallskip

\begin{remark} Consider the class
\begin{displaymath}
S:= \{ \mu \in \SAP(\RR^d) | \cF_{\mathsf{d}}(\mu) \mbox{ is translation bounded measure } \}\,.
\end{displaymath}
Let
\begin{displaymath}
T:= \{ \cF_{\mathsf{d}}(\mu) | \mu \in S \} \,.
\end{displaymath}
Then, by Corollary~\ref{cor: twice ft sap}, all measures in $S$ and $T$, respectively, are Fourier transformable, and the Fourier transform gives two bijection
$\hat{\, } : S \to T \,;\, \hat{\,} : T \to S$ whose composition is a reflection.
\end{remark}

\smallskip

Another immediate consequence of Theorem~\ref{thm: main result} is the following simple characterisation of Fourier transformable measures with pure point transform.

\begin{corollary} Let $\mu \in \cM^\infty(G)$. Then, the following are equivalent:
\begin{itemize}\itemsep=2pt
  \item[(i)] $\mu$ is Fourier transformable and $\widehat{\mu}$ is pure point.
  \item[(ii)] $\mu \in \SAP(G)$ and its Fourier Bohr series $\cF_{\mathsf{d}}(\mu)$ is a weakly admissible formal sum.
\end{itemize}
Moreover, in this case we have
\begin{displaymath}
\widehat{\mu}=\cF_{\mathsf{d}}(\mu) \,.
\end{displaymath}
\end{corollary}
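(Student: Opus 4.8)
The plan is to deduce this corollary directly from Theorem~\ref{thm: main result} together with the results relating strong almost periodicity to pure point Fourier transforms that were recalled in the introduction (the Argabright--deLamadrid duality \cite{ARMA}, phrased in the dual form used in \cite{MoSt}). The statement is really a repackaging: condition (ii) is exactly the hypothesis of Theorem~\ref{thm: main result} together with $\mu \in \SAP(G)$, and condition (i) says $\mu$ is Fourier transformable with $\widehat{\mu}$ pure point. So the work is to see that ``$\widehat{\mu}$ pure point'' and ``$\mu \in \SAP(G)$'' are two sides of the same coin for a transformable $\mu$.

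First I would prove $(ii) \Rightarrow (i)$, which is the easy direction. Assume $\mu \in \SAP(G)$ and $\cF_{\mathsf{d}}(\mu)$ is a weakly admissible formal sum. By Theorem~\ref{thm: main result}, $\mu$ is Fourier transformable and $\widehat{\mu} = \cF_{\mathsf{d}}(\mu)$. But $\cF_{\mathsf{d}}(\mu)$ is a formal sum, hence (being a weakly admissible formal sum, so a measure by Lemma~\ref{lem: admis formal sum impies measure}) a pure point measure. Thus $\widehat{\mu}$ is pure point, and the displayed identity $\widehat{\mu} = \cF_{\mathsf{d}}(\mu)$ holds. This also establishes the ``moreover'' clause.

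Next I would prove $(i) \Rightarrow (ii)$. Suppose $\mu \in \cM^\infty(G)$ is Fourier transformable with $\widehat{\mu}$ pure point. By the Eberlein-type duality of \cite{ARMA} (as stated in the introduction: the Eberlein decomposition of $\widehat{\mu}$ is the dual of the Lebesgue decomposition of $\mu$, equivalently $\widehat{\mu}$ is strongly almost periodic exactly when $\mu$ is pure point, and the dual statement), a transformable translation bounded measure has $\widehat{\mu}$ pure point precisely when $\mu$ is strongly almost periodic. Hence $\mu \in \SAP(G)$. Now apply Corollary~\ref{cor:sap ft admis}: since $\mu \in \SAP(G)$ is Fourier transformable, $\widehat{\mu} = \cF_{\mathsf{d}}(\mu)$ and $\cF_{\mathsf{d}}(\mu)$ is a weakly admissible formal sum. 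This gives (ii) and again the ``moreover'' identity.

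The main obstacle is making precise the one fact I am invoking without a pinpoint citation inside the excerpt: that for a Fourier transformable $\mu \in \cM^\infty(G)$, $\widehat{\mu}$ being pure point forces $\mu \in \SAP(G)$. In the excerpt this is asserted in the introduction (the dual of the Argabright--deLamadrid result, attributed to \cite{MoSt}: if $\mu$ is a transformable translation bounded measure then $\mu_{\mathsf s}$ and $\mu_0$ have Fourier transforms $\widehat{\mu}_{\mathsf{pp}}$ and $\widehat{\mu}_{\mathsf{c}}$ respectively), but one should cite it cleanly; if $\widehat{\mu} = \widehat{\mu}_{\mathsf{pp}}$ then $\widehat{\mu}_{\mathsf{c}} = 0$, so $\widehat{\mu_0} = 0$, whence $\mu_0 = 0$ by injectivity of the Fourier transform on measures, i.e. $\mu = \mu_{\mathsf s} \in \SAP(G)$. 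Everything else is a direct substitution into Theorem~\ref{thm: main result} and Corollary~\ref{cor:sap ft admis}, so no further calculation is needed.

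\begin{proof}
$(ii) \Rightarrow (i)$: Assume $\mu \in \SAP(G)$ and that $\cF_{\mathsf{d}}(\mu)$ is a weakly admissible formal sum. By Theorem~\ref{thm: main result}, $\mu$ is Fourier transformable and $\widehat{\mu} = \cF_{\mathsf{d}}(\mu)$. Since $\cF_{\mathsf{d}}(\mu)$ is a formal sum which, by Lemma~\ref{lem: admis formal sum impies measure}, is a (translation bounded) measure, it is a pure point measure. Hence $\widehat{\mu}$ is pure point.

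$(i) \Rightarrow (ii)$: Assume $\mu \in \cM^\infty(G)$ is Fourier transformable and $\widehat{\mu}$ is pure point. By \cite{ARMA,MoSt}, $\mu$ is weakly almost periodic, its Eberlein components $\mu_{\mathsf s}$ and $\mu_0$ are Fourier transformable, and $\widehat{\mu_{\mathsf s}} = \widehat{\mu}_{\mathsf{pp}}$, $\widehat{\mu_0} = \widehat{\mu}_{\mathsf c}$. Since $\widehat{\mu}$ is pure point, $\widehat{\mu}_{\mathsf c} = 0$, so $\widehat{\mu_0} = 0$ and hence $\mu_0 = 0$. Therefore $\mu = \mu_{\mathsf s} \in \SAP(G)$. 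Now Corollary~\ref{cor:sap ft admis} applies: $\widehat{\mu} = \cF_{\mathsf{d}}(\mu)$ and $\cF_{\mathsf{d}}(\mu)$ is a weakly admissible formal sum. This is exactly (ii).

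In either case we obtained $\widehat{\mu} = \cF_{\mathsf{d}}(\mu)$, which proves the final claim.
\end{proof}
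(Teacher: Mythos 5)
Your proof is correct and follows essentially the same route as the paper: the paper simply cites the fact (from \cite{MoSt}) that a Fourier transformable measure has pure point transform if and only if it is strongly almost periodic, and then invokes Theorem~\ref{thm: main result}. You merely unpack one direction of that cited equivalence via the dual Eberlein decomposition and injectivity of the Fourier transform, which is a harmless elaboration of the same argument.
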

\begin{proof}
We know that for a Fourier transformable measure $\mu$ we have $\widehat{\mu}$ is pure point if and only if $\mu \in \SAP(G)$ \cite{MoSt}.

The claim follows now from Theorem~\ref{thm: main result}.
\end{proof}

\begin{corollary} Let $\mu \in \cM^\infty(\RR^d)$. Then, the following are equivalent:
\begin{itemize}\itemsep=2pt
  \item[(i)] $\mu$ is Fourier transformable and $\widehat{\mu}$ is pure point.
  \item[(ii)] $\mu \in \SAP(\RR^d)$ and its Fourier Bohr series $\cF_{\mathsf{d}}(\mu)$ is a translation bounded measure.
\end{itemize}
Moreover, in this case we have
\begin{displaymath}
\widehat{\mu}=\cF_{\mathsf{d}}(\mu) \,.
\end{displaymath}
\end{corollary}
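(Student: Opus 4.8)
The plan is to derive this corollary as a direct specialization of the preceding corollary (the general-$G$ version) to the case $G = \RR^d$, using the already-established bridge between weak admissibility and translation boundedness on $\RR^d$. The only work is to match up the two hypotheses on the Fourier Bohr series.

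First I would invoke the previous corollary: for $\mu \in \cM^\infty(\RR^d)$, we have that $\mu$ is Fourier transformable with $\widehat{\mu}$ pure point if and only if $\mu \in \SAP(\RR^d)$ and $\cF_{\mathsf{d}}(\mu)$ is a weakly admissible formal sum, and in that case $\widehat{\mu} = \cF_{\mathsf{d}}(\mu)$. So it suffices to show that, for a formal sum $\omega$ on $\RR^d$, being a weakly admissible formal sum is equivalent to being a translation bounded measure. But this is precisely the content of the Corollary following Lemma~\ref{lem: admis formal sum impies measure} (the $\RR^d$ specialization of that lemma): $\omega$ is weakly admissible if and only if $\omega$ is a translation bounded measure. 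Substituting this equivalence into condition (ii) of the previous corollary gives condition (ii) of the present one verbatim, and the formula $\widehat{\mu} = \cF_{\mathsf{d}}(\mu)$ carries over unchanged.

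I do not anticipate any genuine obstacle here: this is a bookkeeping argument that simply threads together two results already in hand. The one point worth stating explicitly in the write-up is that the equivalence ``weakly admissible formal sum $\Leftrightarrow$ translation bounded measure'' on $\RR^d$ is exactly what lets us replace the abstract integrability condition by the concrete translation-boundedness condition; everything else is immediate from the general corollary.

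\begin{proof}
By the previous corollary, $\mu$ is Fourier transformable with $\widehat{\mu}$ pure point if and only if $\mu \in \SAP(\RR^d)$ and $\cF_{\mathsf{d}}(\mu)$ is a weakly admissible formal sum, in which case $\widehat{\mu} = \cF_{\mathsf{d}}(\mu)$. Now, by the Corollary following Lemma~\ref{lem: admis formal sum impies measure}, a formal sum on $\RR^d$ is weakly admissible if and only if it is a translation bounded measure. Applying this to $\omega = \cF_{\mathsf{d}}(\mu)$ shows that condition (ii) of the previous corollary is equivalent to condition (ii) of the present statement, and the equality $\widehat{\mu} = \cF_{\mathsf{d}}(\mu)$ is unchanged.
\end{proof}
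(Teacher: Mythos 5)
Your proof is correct and is essentially the paper's own route: the paper likewise reduces the statement to the general characterization (via the fact that $\widehat{\mu}$ is pure point iff $\mu \in \SAP$, together with Theorem~\ref{thm: main result}), with the passage from weak admissibility to translation boundedness on $\RR^d$ supplied by the formal-sum corollary you cite. Your explicit invocation of that equivalence is just a slightly more detailed bookkeeping of the same argument.
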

\begin{proof}
We know that for a Fourier transformable measure $\mu$ we have $\widehat{\mu}$ is pure point if and only if $\mu \in \SAP(G)$ \cite{MoSt}.

The claim follows now from Theorem~\ref{thm: main result}.
\end{proof}

Theorem~\ref{thm: main result} also produces the following criteria for a pure point measure to be the Fourier transform of a measure.

\begin{corollary} Let $\nu$ be a pure point measure on $\widehat{G}$. Then $\nu$ is the Fourier transform of a measure if and only if $\nu$ is weakly admissible and $\nu$ is the Fourier Bohr series of a strongly almost periodic measure.
\end{corollary}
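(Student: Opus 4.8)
The plan is to prove the corollary by combining Theorem~\ref{thm: main result} with Corollary~\ref{cor:sap ft admis} and the standard fact \cite{ARMA,MoSt} that $\nu$ is pure point (as a measure on $\widehat{G}$) precisely when $\nu = \widehat{\mu}$ forces $\mu \in \SAP(G)$, i.e. that a Fourier transformable measure has pure point Fourier transform if and only if it is strongly almost periodic. Since $\nu$ is assumed to be a pure point measure on $\widehat{G}$, if it is the Fourier transform of some measure $\mu$ on $G$, then that $\mu$ is automatically strongly almost periodic, and this is the linking observation that makes everything go through.

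For the forward direction, suppose $\nu = \widehat{\mu}$ for some measure $\mu$ on $G$. Since $\nu$ is pure point, $\mu \in \SAP(G)$. By Lemma~\ref{lemma-basic prop}(iv), $\nu = \widehat{\mu}$ is weakly admissible. By Corollary~\ref{cor:sap ft admis}, $\widehat{\mu} = \cF_{\mathsf{d}}(\mu)$, so $\nu = \cF_{\mathsf{d}}(\mu)$ is the Fourier Bohr series of the strongly almost periodic measure $\mu$. This gives both required conclusions.

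For the converse, suppose $\nu$ is weakly admissible and $\nu = \cF_{\mathsf{d}}(\mu)$ for some $\mu \in \SAP(G)$. Since $\nu$ is a weakly admissible measure, it is in particular a weakly admissible formal sum (here one uses that a measure supported on a countable set — which a pure point measure is, at least locally — can be regarded as a formal sum, and the two notions of weak admissibility agree by Remark following Definition~\ref{defi:admiss formal sums}(iii)). Thus $\cF_{\mathsf{d}}(\mu)$ is a weakly admissible formal sum, so by Theorem~\ref{thm: main result}, $\mu$ is Fourier transformable with $\widehat{\mu} = \cF_{\mathsf{d}}(\mu) = \nu$. Hence $\nu$ is the Fourier transform of the measure $\mu$.

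The only delicate point — and the step I would be most careful about — is the identification of ``$\nu$ is a weakly admissible measure'' with ``$\nu$ is a weakly admissible formal sum'' in the converse direction, i.e. making sure the pure point measure $\nu$ on $\widehat{G}$ really can be written as a formal sum so that Theorem~\ref{thm: main result} applies verbatim. This is essentially immediate from Remark~(iii) after Definition~\ref{defi:admiss formal sums} together with the fact that a locally finite pure point measure has countable support, but it should be stated explicitly rather than glossed over. Everything else is a direct citation of the results already established.
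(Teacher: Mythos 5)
Your proposal is correct and is essentially the paper's (implicit) argument: the corollary is stated there without proof as a direct consequence of Theorem~\ref{thm: main result}, and your route --- forward direction via Lemma~\ref{lemma-basic prop}(iv), Corollary~\ref{cor:sap ft admis} and the fact from \cite{ARMA,MoSt} that a Fourier transformable measure has pure point transform exactly when it is strongly almost periodic, converse via Theorem~\ref{thm: main result} after identifying the weakly admissible pure point measure with a weakly admissible formal sum --- is precisely the one used in the two corollaries immediately preceding this statement. The only point worth stating explicitly, besides the measure/formal-sum identification you already flag, is that here the \cite{MoSt} equivalence is applied to a preimage $\mu$ that is not assumed in advance to lie in $\cM^\infty(G)$ (the preceding corollaries carry that hypothesis), which matches how the paper itself quotes the result in its introduction, so it is a citation to be made precise rather than a defect of your argument.
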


\begin{corollary} Let $\nu$ be a pure point measure on $\RR^d$. Then $\nu$ is the Fourier transform of a measure if and only if $\nu$ is translation bounded and $\nu$ is the Fourier Bohr series of a strongly almost periodic measure.
\end{corollary}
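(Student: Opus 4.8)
The plan is to reduce the statement to the general-LCAG version (the preceding corollary, with $G$ replaced by $\RR^d$) together with the $\RR^d$-specific dictionary between weak admissibility and translation boundedness that was established earlier. So the whole proof should be short: it is a specialization, not new work.

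First I would invoke the preceding corollary: a pure point measure $\nu$ on $\widehat{\RR^d}\cong\RR^d$ is the Fourier transform of a measure if and only if $\nu$ is weakly admissible and $\nu=\cF_{\mathsf{d}}(\mu)$ for some $\mu\in\SAP(\RR^d)$. Then I would replace the hypothesis ``$\nu$ is weakly admissible'' by ``$\nu$ is translation bounded'': this is exactly the content of Theorem~\ref{thm: lin}-plus-Theorem~\ref{thm: admis implies strong admis}, packaged in the equivalence theorem of Section~4 (the one listing (i)--(v)), which gives, for any Radon measure on $\RR^d$, that weak admissibility and translation boundedness are equivalent. Substituting this equivalence into the corollary yields precisely the claimed statement. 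I would also remark that when $\nu$ is already known to be a pure point measure, ``$\nu$ is a translation bounded measure'' is the natural hypothesis, matching the phrasing ``$\nu$ is translation bounded'' in the statement.

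Concretely the write-up would be: ``By the equivalence in Section~\ref{...} (Theorem on weak admissibility on $\RR^d$), a Radon measure on $\RR^d$ is weakly admissible if and only if it is translation bounded. Substituting this into the previous Corollary gives the claim.'' That is essentially a one-line proof.

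I do not anticipate any real obstacle here: every ingredient is already stated in the excerpt (the general-$G$ corollary, Lin's theorem, and the $\RR^d$ equivalence of weak admissibility with translation boundedness), and the argument is pure substitution. The only thing to be careful about is the identification $\widehat{\RR^d}\cong\RR^d$, under which ``translation bounded measure on $\widehat{G}$'' and ``translation bounded measure on $\RR^d$'' mean the same thing, and the observation that the Fourier transform $\widehat\mu=\nu$ (when it exists) is automatically pure point since $\mu\in\SAP(\RR^d)$, so the pure-point hypothesis on $\nu$ is not an extra restriction but the correct normalization. A complete proof would therefore read essentially as follows.

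\begin{proof}
By the equivalence established in Section~\ref{...} (the Theorem characterizing weak admissibility on $\RR^d$), a Radon measure on $\RR^d\cong\widehat{\RR^d}$ is weakly admissible if and only if it is translation bounded. In particular, the pure point measure $\nu$ is weakly admissible if and only if it is a translation bounded measure. Substituting this equivalence into the previous Corollary, we obtain that $\nu$ is the Fourier transform of a measure if and only if $\nu$ is translation bounded and $\nu$ is the Fourier Bohr series of a strongly almost periodic measure.
\end{proof}
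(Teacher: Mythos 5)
Your proposal is correct and follows essentially the same route the paper intends: the corollary is stated there without a separate argument, precisely as the $\RR^d$ specialization of the preceding (general $G$) corollary, with ``weakly admissible'' traded for ``translation bounded'' via the Section~4 equivalence theorem built on Lin's result. Since $\nu$ is a pure point, hence Radon, measure on the self-dual group $\RR^d$, that equivalence applies directly and your one-line substitution is all that is needed.
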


\medskip
Theorem~\ref{thm: main result} gives an independent proof of the following result, which was proven recently in \cite{MoSt}:

\begin{theorem} Let $\mu \in \cM^\infty(G)$ be a Fourier transformable measure. Then $\mu_{\mathsf{s}}$ and $\mu_0$ are Fourier transformable and
\begin{displaymath}
\widehat{\mu}_{\mathsf{pp}}=\widehat{( \mu_{\mathsf{s}})} \,; \, \widehat{\mu}_{\mathsf{c}}=\widehat{( \mu_{0})} \,.
\end{displaymath}
\end{theorem}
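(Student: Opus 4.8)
The plan is to derive this from Theorem~\ref{thm: main result} together with the known compatibility of weak admissibility and Eberlein decomposition (Theorem~\ref{them:admis Eber decomp}), using the fact that the Fourier--Bohr series only sees the strongly almost periodic part. First I would recall that since $\mu$ is Fourier transformable and translation bounded, it is weakly almost periodic \cite{ARMA,MoSt}, so the Eberlein decomposition $\mu=\mu_{\mathsf{s}}+\mu_0$ makes sense with $\mu_{\mathsf{s}}\in\SAP(G)$ and $\mu_0\in\WAP_0(G)$. By Theorem~\ref{thm: FT is cF}, $\widehat{\mu}_{\mathsf{pp}}=\cF_{\mathsf{d}}(\mu)$ and $\cF_{\mathsf{d}}(\mu)$ is a weakly admissible formal sum; and by the first part of that same theorem (or the cited $c_\chi$--identities) $\cF_{\mathsf{d}}(\mu)=\cF_{\mathsf{d}}(\mu_{\mathsf{s}})$.

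Next I would feed this into Theorem~\ref{thm: main result}: since $\mu_{\mathsf{s}}\in\SAP(G)$ and its Fourier--Bohr series $\cF_{\mathsf{d}}(\mu_{\mathsf{s}})=\cF_{\mathsf{d}}(\mu)$ is a weakly admissible formal sum, the theorem gives directly that $\mu_{\mathsf{s}}$ is Fourier transformable with $\widehat{\mu_{\mathsf{s}}}=\cF_{\mathsf{d}}(\mu_{\mathsf{s}})=\cF_{\mathsf{d}}(\mu)=\widehat{\mu}_{\mathsf{pp}}$. This is the bulk of the statement. Then $\mu_0=\mu-\mu_{\mathsf{s}}$ is Fourier transformable as a difference of two Fourier transformable measures, and by linearity of the Fourier transform $\widehat{\mu_0}=\widehat{\mu}-\widehat{\mu_{\mathsf{s}}}=\widehat{\mu}-\widehat{\mu}_{\mathsf{pp}}=\widehat{\mu}_{\mathsf{c}}$, which completes the proof.

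I do not expect a serious obstacle here, since all the heavy lifting is already in Theorem~\ref{thm: main result} and Theorem~\ref{thm: FT is cF}. The one point to be careful about is the justification that $\cF_{\mathsf{d}}(\mu)=\cF_{\mathsf{d}}(\mu_{\mathsf{s}})$ — this is exactly the statement that Fourier--Bohr coefficients depend only on the strongly almost periodic component, which follows because $c_\chi(\mu)=M(\overline{\chi}\,\mu)$ and $M$ annihilates $\WAP_0(G)$; this is recorded in \cite{ARMA} and in Theorem~\ref{thm: FT is cF}. A minor alternative is to argue without invoking $\mu$'s weak almost periodicity a priori and instead note that Fourier transformability of $\mu$ already forces $\widehat{\mu}$ to be weakly almost periodic \cite{ARMA}, whose Eberlein decomposition is the Lebesgue decomposition $\widehat{\mu}_{\mathsf{pp}}+\widehat{\mu}_{\mathsf{c}}$ of $\widehat{\mu}$; either route reaches the same conclusion. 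If one wanted a self-contained derivation of weak admissibility of $\cF_{\mathsf{d}}(\mu)$ without quoting Theorem~\ref{thm: FT is cF}, one could instead invoke Lemma~\ref{lemma-basic prop}(iv) (so $\widehat{\mu}$ is weakly admissible) and Lemma~\ref{lemma-basic prop}(iii) (so $\widehat{\mu}_{\mathsf{pp}}$ is weakly admissible), which gives the same input to Theorem~\ref{thm: main result}.
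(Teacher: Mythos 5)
Your proposal is correct and follows essentially the same route as the paper's own proof: both use that $\mu\in\WAP(G)$, identify $\widehat{\mu}_{\mathsf{pp}}=\cF_{\mathsf{d}}(\mu)=\cF_{\mathsf{d}}(\mu_{\mathsf{s}})$ as a weakly admissible formal sum, apply Theorem~\ref{thm: main result} to $\mu_{\mathsf{s}}$, and finish by taking the difference for $\mu_0$. No gaps; the auxiliary justifications you flag (the mean annihilating $\WAP_0(G)$, and weak admissibility via Lemma~\ref{lemma-basic prop}) match the paper's reasoning.
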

\begin{proof}

Since $\mu \in \cM^\infty(G)$ is Fourier transformable, we get that $\mu \in \WAP(G)$ \cite{MoSt}. Also, $\widehat{\mu}$ is a weakly admissible measure.

Therefore, $\widehat{\mu}_{\mathsf{pp}}$ is a weakly admissible formal sum.

Moreover, we have
\begin{displaymath}
\widehat{\mu}_{\mathsf{pp}}= \cF_{\mathsf{d}}(\mu) =  \cF_{\mathsf{d}}(\mu_{\mathsf{s}}) \,.
\end{displaymath}

Therefore, $\mu_{\mathsf{s}}$ is a strongly almost periodic measure with a weakly admissible Fourier-Bohr series, and hence Fourier transformable. Moreover, its Fourier transform is
\begin{displaymath}
\widehat{\mu_{\mathsf{s}}} =  \cF_{\mathsf{d}}(\mu_{\mathsf{s}}) =\widehat{\mu}_{\mathsf{pp}} \,.
\end{displaymath}
Finally, as a difference of two Fourier transformable measures, $\mu_0=\mu-\mu_{\mathsf{s}}$ is Fourier transformable and
\begin{displaymath}
\widehat{\mu_0}=\widehat{\mu-\mu_{\mathsf{s}}}=\widehat{\mu}-\widehat{\mu_{\mathsf{s}}}=\widehat{\mu}-\left( \widehat{\mu}\right)_{\mathsf{pp}}=\left( \widehat{\mu}\right)_{\mathsf{c}} \,.
\end{displaymath}

\end{proof}

We complete the section by providing a characterisation for the class of positive definite strong almost periodic measures in terms of positivity and weak admissibility of the Fourier Bohr series.

\begin{theorem} Let $\mu \in \SAP(G)$. Then $\mu$ is positive definite if and only if  $\cF_{\mathsf{d}}(\mu)$ is a positive weakly admissible formal sum.
\end{theorem}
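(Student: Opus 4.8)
The plan is to leverage the main result, Theorem~\ref{thm: main result}, which identifies $\widehat{\mu}$ with $\cF_{\mathsf{d}}(\mu)$ whenever $\mu \in \SAP(G)$ is Fourier transformable, together with the standard fact (Remark after the definition of Fourier transformability) that a measure is positive definite if and only if it is Fourier transformable with positive Fourier transform. So the logical skeleton is: positive definiteness of $\mu$ $\Leftrightarrow$ [$\mu$ Fourier transformable and $\widehat{\mu} \geq 0$] $\Leftrightarrow$ [$\cF_{\mathsf{d}}(\mu)$ weakly admissible and $\cF_{\mathsf{d}}(\mu) \geq 0$].

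For the forward direction, suppose $\mu$ is positive definite. Then $\mu$ is Fourier transformable and $\widehat{\mu}$ is a positive measure. Since $\mu \in \SAP(G)$, Theorem~\ref{thm: main result} gives that $\cF_{\mathsf{d}}(\mu)$ is a weakly admissible formal sum and $\widehat{\mu} = \cF_{\mathsf{d}}(\mu)$; hence $\cF_{\mathsf{d}}(\mu)$, being equal to the positive measure $\widehat{\mu}$, is a positive weakly admissible formal sum. (Here "positive formal sum" means all coefficients $c_\chi(\mu) \geq 0$, which for a formal sum that is a measure is equivalent to the measure being positive.)

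For the converse, suppose $\cF_{\mathsf{d}}(\mu)$ is a positive weakly admissible formal sum. By Lemma~\ref{lem: admis formal sum impies measure}, $\nu := \cF_{\mathsf{d}}(\mu)$ is a (translation bounded) measure, and by positivity it is a positive measure. By Theorem~\ref{thm: main result}, $\mu$ is Fourier transformable with $\widehat{\mu} = \nu \geq 0$. A measure whose Fourier transform is a positive measure is positive definite: indeed, for any $f \in \Cc(G)$, $\langle \mu, f * \widetilde{f} \rangle = \langle \widehat{\mu}, |\widecheck{f}|^2 \rangle \geq 0$ since $\widehat{\mu} \geq 0$ and $|\widecheck{f}|^2 \geq 0$; this is precisely the definition of positive definiteness. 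Hence $\mu$ is positive definite.

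The only subtlety — and it is minor — is bookkeeping the precise meaning of "positive" for a formal sum versus a measure, and confirming that the equivalence "$\widehat{\mu}$ positive $\Leftrightarrow$ $\mu$ positive definite" is available in both directions at this level of generality; the "$\Leftarrow$" direction of that equivalence is the elementary computation just given, while the "$\Rightarrow$" direction is the cited result of Argabright--de Lamadrid (Remark (ii) after the definition of Fourier transformability). I do not anticipate a genuine obstacle here: the theorem is essentially a repackaging of Theorem~\ref{thm: main result} through the positive-definiteness/positive-transform dictionary.
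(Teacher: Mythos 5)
Your proposal is correct and follows essentially the same route as the paper: both directions reduce to Theorem~\ref{thm: main result} combined with the equivalence between positive definiteness and having a positive Fourier transform, which the paper simply cites while you additionally spell out the elementary computation $\langle \mu, f*\widetilde{f}\rangle = \langle \widehat{\mu}, |\widecheck{f}|^2\rangle \geq 0$ for the converse half.
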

\begin{proof}
$\Rightarrow$: Since $\mu$ is positive definite, it is Fourier transformable and $\widehat{\mu}$ is positive \cite{ARMA1,BF,MoSt}. The claim follows now from Theorem \ref{thm: main result}.

$\Leftarrow$: By Theorem \ref{thm: main result}, $\mu$ is Fourier transformable and
\begin{displaymath}
\widehat{\mu} = \cF_{\mathsf{d}}(\mu) \geq 0 \,.
\end{displaymath}
Therefore, $\mu$ is a Fourier transformable measure with positive Fourier transform, and hence positive definite \cite{ARMA1,MoSt}.
\end{proof}

\begin{corollary} Let $\mu \in \SAP(\RR^d)$. Then $\mu$ is positive definite if and only if  $\cF_{\mathsf{d}}(\mu)$ is a positive translation bounded measure.
\end{corollary}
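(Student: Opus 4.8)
The plan is to derive this corollary directly from the theorem it follows, namely the characterization of positive definiteness for $\mu \in \SAP(G)$ via positivity and weak admissibility of the Fourier--Bohr series, specialized to the group $G = \RR^d$. The only thing that needs to be checked is that, in the case $G = \RR^d$, the condition ``$\cF_{\mathsf{d}}(\mu)$ is a positive weakly admissible formal sum'' is equivalent to ``$\cF_{\mathsf{d}}(\mu)$ is a positive translation bounded measure''.

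First I would invoke the preceding theorem: $\mu \in \SAP(\RR^d)$ is positive definite if and only if $\cF_{\mathsf{d}}(\mu)$ is a positive weakly admissible formal sum. Next, I would use the Corollary following Lemma~\ref{lem: admis formal sum impies measure}, which states that a formal sum on $\RR^d$ is weakly admissible if and only if it is a translation bounded measure. Combining these two equivalences immediately yields that $\mu$ is positive definite if and only if $\cF_{\mathsf{d}}(\mu)$ is positive and is a translation bounded measure, which is exactly the claim. One should note that positivity of a formal sum (meaning all coefficients $c_\chi(\mu) \geq 0$) matches positivity of the corresponding measure, so the positivity hypothesis transfers without any extra work.

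There is essentially no obstacle here: the corollary is a routine specialization, and the only minor point to keep in mind is that $\SAP(\RR^d)$ measures are by definition translation bounded, so all the hypotheses needed to apply the general results over $G = \RR^d$ are automatically in place. I would write the proof as a two-line deduction citing the previous theorem and the $\RR^d$ characterization of weakly admissible formal sums.

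\begin{proof}
By the previous theorem, $\mu \in \SAP(\RR^d)$ is positive definite if and only if $\cF_{\mathsf{d}}(\mu)$ is a positive weakly admissible formal sum. By the Corollary to Lemma~\ref{lem: admis formal sum impies measure}, a formal sum on $\RR^d$ is weakly admissible if and only if it is a translation bounded measure. Combining these, $\mu$ is positive definite if and only if $\cF_{\mathsf{d}}(\mu)$ is a positive translation bounded measure.
\end{proof}
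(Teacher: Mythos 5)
Your proof is correct and is exactly the intended derivation: the paper states this corollary as an immediate specialization of the preceding theorem, with the passage from ``weakly admissible formal sum'' to ``translation bounded measure'' on $\RR^d$ supplied by the corollary to Lemma~\ref{lem: admis formal sum impies measure}. Nothing is missing.
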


\section{Strongly almost periodic measures as Fourier Transforms}

In this section we provide a simple necessary and sufficient condition for a strongly almost periodic measure $\mu$ to be a Fourier transform, and list some of its consequences.

The result in Theorem~\ref{thm: sap as FT} below complements Theorem~\ref{thm: main result}. We would like to point out that if the strongly almost periodic measure $\mu$ is twice Fourier transformable, then Theorem~\ref{thm: main result} and Theorem~\ref{thm: sap as FT} become equivalent via Theorem~\ref{thm: twice FT}, but in general they are independent of each other.

\begin{theorem}\label{thm: sap as FT} Let $\mu \in \SAP(\widehat{G})$. Then the following are equivalent:
\begin{itemize}
  \item [(i)] There exists some measure $\nu$ on $G$ with $\widehat{\nu}=\mu$.
  \item [(ii)] $\cF_{\mathsf{d}}(\mu)$ is a measure, and $\mu$ is weakly admissible.
\end{itemize}
Moreover, in this case we have
\begin{displaymath}
\nu= \left(\cF_{\mathsf{d}}(\mu) \right)^\dagger \,.
\end{displaymath}
\end{theorem}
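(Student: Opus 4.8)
The plan is to get $(i)\Rightarrow(ii)$ from the known structure theory, and to prove $(ii)\Rightarrow(i)$ by an argument parallel to the proof of Theorem~\ref{thm: main result}, carried out on $\widehat{G}$.

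\emph{$(i)\Rightarrow(ii)$, and the formula for $\nu$.} Suppose $\widehat{\nu}=\mu$ for a measure $\nu$ on $G$. Since $\nu$ is Fourier transformable, $\mu=\widehat{\nu}$ is weakly admissible by Lemma~\ref{lemma-basic prop}~(iv). Because $\widehat{\nu}=\mu$ is strongly almost periodic, $\nu$ is pure point, and hence $\cF_{\mathsf d}(\mu)=\cF_{\mathsf d}(\widehat{\nu})=\nu^{\dagger}$ \cite{ARMA,MoSt}, which is a measure, being the reflection of $\nu$. This is (ii); since reflection is an involution, $\nu=\bigl(\cF_{\mathsf d}(\mu)\bigr)^{\dagger}$, and $\nu$ is unique by injectivity of the Fourier transform of measures.

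\emph{$(ii)\Rightarrow(i)$.} Assume $\sigma:=\cF_{\mathsf d}(\mu)=\sum_{x\in G}c_x(\mu)\delta_x$ is a measure and $\mu$ is weakly admissible, and put $\nu:=\sigma^{\dagger}$. To check $\widehat{\nu}=\mu$, fix $f\in\Cc(G)$; weak admissibility gives $\widecheck{f}\in L^{2}(|\mu|)$, so $|\widecheck{f}|^{2}\in L^{1}(|\mu|)$, and it remains to prove
\begin{displaymath}
\langle\nu,f*\widetilde{f}\rangle=\langle\mu,|\widecheck{f}|^{2}\rangle\,.
\end{displaymath}
Following the scheme of Theorem~\ref{thm: main result}, I would introduce two functions on $\widehat{G}$, namely $g_1:=\widehat{(f*\widetilde{f})^{\dagger}\sigma}$ and $g_2(t):=\int_{\widehat{G}}|\widecheck{f}(\chi+t)|^{2}\,d\mu(\chi)$. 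The measure $(f*\widetilde{f})^{\dagger}\sigma$ is \emph{finite} (because $\sigma$ is a measure and $(f*\widetilde{f})^{\dagger}\in\Cc(G)$), so $g_1$ is an absolutely convergent sum of characters of $\widehat{G}$, hence a Bohr almost periodic function, with Fourier--Bohr series $\sum_x (f*\widetilde{f})^{\dagger}(x)c_x(\mu)\delta_{-x}$ and $g_1(0)=\langle\sigma,(f*\widetilde{f})^{\dagger}\rangle=\langle\nu,f*\widetilde{f}\rangle$. Using $|\widecheck{f}|^{2}=\widehat{(f*\widetilde{f})^{\dagger}}=|\widehat{f^{\dagger}}|^{2}$ with $f^{\dagger}\in\Cc(G)$, Theorem~\ref{thm: admis implies strong admis}~(iii), applied to the weakly admissible measure $\mu$ on $\widehat{G}$, shows $g_2\in\Cu(\widehat{G})$; since $\mu\in\SAP(\widehat{G})$ the function $g_2$ is, moreover, Bohr almost periodic, with the same Fourier--Bohr series $\sum_x (f*\widetilde{f})^{\dagger}(x)c_x(\mu)\delta_{-x}$ and with $g_2(0)=\langle\mu,|\widecheck{f}|^{2}\rangle$. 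Two Bohr almost periodic functions with the same Fourier--Bohr series coincide \cite{EBE,ARMA,MoSt}, so $g_1=g_2$; equating at $0$ yields the displayed identity. As $f\in\Cc(G)$ was arbitrary, $\widehat{\nu}=\mu$, and $\nu=(\cF_{\mathsf d}(\mu))^{\dagger}$.

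\emph{Where the work is.} The delicate step is promoting $g_2$ from $\Cu(\widehat{G})$ (given by Theorem~\ref{thm: admis implies strong admis}) to a Bohr almost periodic function, together with the computation of its Fourier--Bohr series. Writing $g_2=|\widecheck{f}|^{2}*\mu^{\dagger}$ with $\mu^{\dagger}\in\SAP(\widehat{G})$, one knows $h*\mu^{\dagger}\in SAP(\widehat{G})$ and $c_\chi(h*\mu^{\dagger})=\widehat{h}(\chi)c_\chi(\mu^{\dagger})$ for $h\in\Cc(\widehat{G})$; to pass to $h=|\widecheck{f}|^{2}$ one would approximate $|\widecheck{f}|^{2}$ by compactly supported functions in a Wiener-amalgam norm against which convolution with a translation bounded measure is continuous, using that $|\widecheck{f}|^{2}$, being the Fourier transform of a compactly supported function, lies in that amalgam space. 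Keeping the various reflections ($f$ versus $f^{\dagger}$, $\widehat{\phantom{f}}$ versus $\widecheck{\phantom{f}}$) consistent so that $g_1$ and $g_2$ literally share a Fourier--Bohr series is a further bookkeeping chore. An alternative route is to regularise $\mu$ by an approximate identity for $(SAP(\widehat{G}),\circledast)$ built from trigonometric polynomials, so that each $\mu\circledast e_\alpha$ is the Fourier transform of a finite measure $\nu_\alpha$ on $G$ with $\nu_\alpha\to\nu$ vaguely, and to let $\alpha$ run in $\langle\nu_\alpha,f*\widetilde{f}\rangle=\langle\mu\circledast e_\alpha,|\widecheck{f}|^{2}\rangle$; then the subtle point migrates to the convergence $\langle\mu\circledast e_\alpha,|\widecheck{f}|^{2}\rangle\to\langle\mu,|\widecheck{f}|^{2}\rangle$ against the non-compactly supported test function $|\widecheck{f}|^{2}$.
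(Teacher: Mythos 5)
Your plan follows essentially the same route as the paper: in $(i)\Rightarrow(ii)$ you use pure pointness of $\nu$ and $\nu(\{x\})=c_{-x}(\mu)$, and in $(ii)\Rightarrow(i)$ you set $\nu=(\cF_{\mathsf d}(\mu))^{\dagger}$, compare the Fourier transform of the finite pure point measure $(f*\widetilde f)\,\nu$ with the convolution $\left|\widecheck f\right|^{2}*\mu$ (made uniformly continuous via Theorem~\ref{thm: admis implies strong admis}~(iii)), match Fourier--Bohr series, and evaluate at $0$ --- exactly the paper's argument. The one step you flag as ``where the work is'' (that $\left|\widecheck f\right|^{2}*\mu\in\SAP(\widehat G)$ with coefficients $c_x=\widehat{\left|\widecheck f\right|^{2}}(x)\,c_x(\mu)$) needs no amalgam-space approximation or approximate-identity regularisation: the paper closes it directly by citing \cite[Prop.~7.3]{ARMA} and \cite[Prop.~8.2]{ARMA}, which apply to convolution of a strongly almost periodic measure with functions of the form $\left|\widecheck f\right|^{2}=\reallywidehat{(f*\widetilde f)^{\dagger}}$, so your plan is correct and only this citation is missing to make it complete.
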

\begin{proof}

$(i) \Rightarrow (ii):$ Since $\nu$ is Fourier transformable, and $\widehat{\mu} \in \SAP(\widehat{G})$, the measure $\nu$ is pure point \cite{ARMA}.

Moreover, for all $x \in G$ we have \cite{ARMA}
\begin{displaymath}
\nu(\{ x \})= M(x \widehat{\nu})=c_{-x}(\mu) \,.
\end{displaymath}
This shows that
\begin{displaymath}
\nu= \left(\cF_{\mathsf{d}}(\mu) \right)^\dagger \,.
\end{displaymath}

Therefore, as $\nu$ is a measure, $\cF_{\mathsf{d}}(\mu)$ is a measure. Finally, as the Fourier transform of $\nu$, $\mu$ is weakly admissible.

$(ii) \Rightarrow (i):$ Define $\nu=  \left(\cF_{\mathsf{d}}(\mu) \right)^\dagger$. We claim that $\nu$ is Fourier transformable, and
\begin{displaymath}
\widehat{\nu}= \mu \,.
\end{displaymath}
Let $f \in \Cc(G)$. Then $f*\widetilde{f} \nu$ is a finite pure point measure, and hence $g=\reallywidehat{f*\widetilde{f}  \nu}$ is a strongly almost periodic function.

Moreover, by the Theorem~\ref{thm: admis implies strong admis} (iii), $\left|\widehat{f}\right|^2$ is convolvable as a function with $\mu$, and the convolution $\left|\widehat{f}\right|^2*\mu$ is continuous.

Finally, by \cite[Prop.~7.3]{ARMA} we have $\left|\widehat{f}\right|^2*\mu \in \SAP(\widehat{G})$ and the Fourier Bohr coefficients satisfy \cite[Prop.~8.2]{ARMA}
\begin{displaymath}
c_x( \left|\widehat{f}\right|^2 * \mu) = \reallywidehat{\left|\widehat{f}\right|^2}(x) c_x(\mu) =f*\widetilde{f}(-x) c_x(\mu)= f*\widetilde{f}(-x) \nu(\{ -x \}) \,.
\end{displaymath}
As $g$ is the Fourier transform of the finite pure point measure $\reallywidehat{f*\widetilde{f} \nu}$, it is also strongly almost periodic as measure and \cite{ARMA,MoSt}
\begin{displaymath}
c_x(g)= f*\widetilde{f}(-x) \nu(\{ -x \}) \,.
\end{displaymath}
This shows that $g$ and $\left|\widehat{f}\right|^2*\mu$ are two strongly almost periodic measures which have the same Fourier Bohr series, therefore they are equal. We also know that $g \in \Cu(G)$ and, by Theorem~\ref{thm: admis implies strong admis} (iii) we have $\left|\widehat{f}\right|^2*\mu \in \Cu(G)$. It follows that $g= \left|\widehat{f}\right|^2*\mu$ as functions. In particular
\begin{displaymath}
\langle f*\widetilde{f} , \nu \rangle = g(0)=  \left|\widehat{f}\right|^2*\mu(0)= \langle \mu, \left|\widecheck{f}\right|^2 \rangle \,.
\end{displaymath}
Hence, by the weak admissibility of $\mu$ for all $f \in \Cc(G)$ we have $\left|\widecheck{f}\right|^2 \in L^1(|\mu|)$ and
\begin{displaymath}
\langle f*\widetilde{f} , \nu \rangle = \langle \mu, \left|\widecheck{f}\right|^2 \rangle \,.
\end{displaymath}
Therefore, by the definition of Fourier transformability, $\nu$ is Fourier transformable and
\begin{displaymath}
\widehat{\nu}=\mu \,.
\end{displaymath}
\end{proof}

As above, when $G=\RR^d$ we get

\begin{theorem} Let $\mu \in \SAP(\widehat{\RR^d})$. Then $\mu$ is the Fourier transform of a measure if and only if $\cF_{\mathsf{d}}(\mu)$ is a measure. Moreover, in this case we have
\begin{displaymath}
\reallywidehat{ \left(\cF_{\mathsf{d}}(\mu) \right)^\dagger}=\mu \,.
\end{displaymath}
\end{theorem}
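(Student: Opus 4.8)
The plan is to deduce this statement directly from Theorem~\ref{thm: sap as FT}, the only extra ingredient being that on $\RR^d$ the weak admissibility hypothesis appearing in condition (ii) there is automatically satisfied by every strongly almost periodic measure.

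First I would note that $\widehat{\RR^d}$ is (topologically) isomorphic to $\RR^d$, so every result proven above for Radon measures on $\RR^d$ — in particular the characterisation of weak admissibility obtained from Lin's Theorem~\ref{thm: lin}, which asserts that for a Radon measure on $\RR^d$ translation boundedness is equivalent to weak admissibility — applies verbatim to measures on $\widehat{\RR^d}$. Now let $\mu \in \SAP(\widehat{\RR^d})$. By the very definition of a strongly almost periodic measure we have $\mu \in \cM^\infty(\widehat{\RR^d})$, i.e. $\mu$ is translation bounded; hence, by the cited characterisation of weak admissibility on $\RR^d$, the measure $\mu$ is weakly admissible. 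Consequently condition (ii) of Theorem~\ref{thm: sap as FT} collapses to the single requirement that $\cF_{\mathsf{d}}(\mu)$ be a measure.

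It then remains to invoke Theorem~\ref{thm: sap as FT} with $G=\RR^d$: it yields that $\mu$ is the Fourier transform of some measure $\nu$ on $\RR^d$ precisely when $\cF_{\mathsf{d}}(\mu)$ is a measure, and that in this case $\nu = \left(\cF_{\mathsf{d}}(\mu)\right)^\dagger$. Spelling out $\widehat{\nu}=\mu$ with this value of $\nu$ gives the displayed identity $\reallywidehat{\left(\cF_{\mathsf{d}}(\mu)\right)^\dagger}=\mu$, which finishes the argument. Since the whole proof is a reduction, there is no genuine obstacle; the only point that requires care is the observation that translation boundedness — which is free of charge for $\SAP$ measures — already implies weak admissibility in the Euclidean setting, and this is exactly the place where Lin's theorem is used.
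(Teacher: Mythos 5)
Your proposal is correct and follows exactly the route the paper intends: the paper states this result as an immediate specialisation of Theorem~\ref{thm: sap as FT}, using precisely the fact that on $\RR^d$ weak admissibility coincides with translation boundedness (via Lin's Theorem~\ref{thm: lin}) and that $\SAP$ measures are translation bounded by definition, so condition (ii) reduces to $\cF_{\mathsf{d}}(\mu)$ being a measure. Your identification of this as the only point requiring care is accurate, and the derivation of the displayed identity from $\nu=\left(\cF_{\mathsf{d}}(\mu)\right)^\dagger$ is exactly as the paper intends.
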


As a consequence of Theorem~\ref{thm: sap as FT} we also get a new proof of the following result.
\begin{theorem} \cite[Thm.~11.2]{ARMA} Let $\mu$ be a Fourier transformable measure. Then $\mu_{\mathsf{pp}}, \mu_{\mathsf{c}}$ are Fourier transformable and
\begin{displaymath}
\widehat{(\mu)_{\mathsf{pp}}}=(\widehat{\mu})_{\mathsf{s}} \, \quad \, \widehat{(\mu)_{\mathsf{c}}}=(\widehat{\mu})_{0} \,.
\end{displaymath}
\end{theorem}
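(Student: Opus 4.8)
The plan is to deduce this from Theorem~\ref{thm: sap as FT} applied to the strongly almost periodic component of $\widehat{\mu}$, in exact parallel with the way the ``pure point transform'' corollary followed from Theorem~\ref{thm: main result}. First I would collect the relevant facts about $\widehat{\mu}$. Since $\mu$ is Fourier transformable, $\widehat{\mu}$ is a translation bounded measure on $\widehat{G}$ lying in $\WAP(\widehat{G})$ \cite{ARMA1,ARMA}, so the Eberlein decomposition $\widehat{\mu}=(\widehat{\mu})_{\mathsf{s}}+(\widehat{\mu})_0$ is defined with $(\widehat{\mu})_{\mathsf{s}}\in\SAP(\widehat{G})$. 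By Lemma~\ref{lemma-basic prop}(iv) the measure $\widehat{\mu}$ is weakly admissible, and since $\widehat{\mu}\in\WAP(\widehat{G})$, Theorem~\ref{them:admis Eber decomp} shows that $(\widehat{\mu})_{\mathsf{s}}$ (and $(\widehat{\mu})_0$) are weakly admissible.

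Next I would compute the Fourier--Bohr series of $(\widehat{\mu})_{\mathsf{s}}$. Because $\widehat{\mu}\in\WAP(\widehat{G})$, we have $\cF_{\mathsf{d}}((\widehat{\mu})_{\mathsf{s}})=\cF_{\mathsf{d}}(\widehat{\mu})$. For a Fourier transformable measure the Fourier--Bohr coefficients of the transform are $c_{-x}(\widehat{\mu})=\mu(\{x\})$ for all $x\in G$ \cite{ARMA,MoSt} (this is precisely the identity used in the $(i)\Rightarrow(ii)$ part of the proof of Theorem~\ref{thm: sap as FT}), so
\begin{displaymath}
\cF_{\mathsf{d}}(\widehat{\mu})=\sum_{x\in G}\mu(\{-x\})\,\delta_x=\left(\mu_{\mathsf{pp}}\right)^\dagger .
\end{displaymath}
In particular $\cF_{\mathsf{d}}((\widehat{\mu})_{\mathsf{s}})=(\mu_{\mathsf{pp}})^\dagger$ is a measure, since $\mu_{\mathsf{pp}}$ is.

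Now I would invoke Theorem~\ref{thm: sap as FT} for $(\widehat{\mu})_{\mathsf{s}}\in\SAP(\widehat{G})$: it is weakly admissible and its Fourier--Bohr series is a measure, hence there is a measure $\rho$ on $G$ with $\widehat{\rho}=(\widehat{\mu})_{\mathsf{s}}$ and $\rho=\left(\cF_{\mathsf{d}}((\widehat{\mu})_{\mathsf{s}})\right)^\dagger=\left((\mu_{\mathsf{pp}})^\dagger\right)^\dagger=\mu_{\mathsf{pp}}$. Thus $\mu_{\mathsf{pp}}$ is Fourier transformable with $\widehat{\mu_{\mathsf{pp}}}=(\widehat{\mu})_{\mathsf{s}}$. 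Finally $\mu_{\mathsf{c}}=\mu-\mu_{\mathsf{pp}}$ is Fourier transformable as a difference of Fourier transformable measures, and $\widehat{\mu_{\mathsf{c}}}=\widehat{\mu}-\widehat{\mu_{\mathsf{pp}}}=\widehat{\mu}-(\widehat{\mu})_{\mathsf{s}}=(\widehat{\mu})_0$, which completes the argument.

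The step I expect to be the main obstacle is the middle one: justifying $\cF_{\mathsf{d}}(\widehat{\mu})=(\mu_{\mathsf{pp}})^\dagger$ for a \emph{general} Fourier transformable $\mu$ (not assumed pure point, nor twice transformable), and confirming that the weak admissibility of $\widehat{\mu}$ genuinely transfers to $(\widehat{\mu})_{\mathsf{s}}$ via Theorem~\ref{them:admis Eber decomp}. Once these two points are in place, the rest is routine bookkeeping with the Eberlein decomposition together with the fact that $\cF_{\mathsf{d}}$ depends only on the strong component.
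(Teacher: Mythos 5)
Your argument is correct and is essentially the paper's own proof: both deduce weak admissibility of $(\widehat{\mu})_{\mathsf{s}}$ from Theorem~\ref{them:admis Eber decomp}, identify $\cF_{\mathsf{d}}(\widehat{\mu})=\left(\mu_{\mathsf{pp}}\right)^\dagger$ via the coefficient formula from \cite{ARMA,MoSt}, apply Theorem~\ref{thm: sap as FT} to recover $\mu_{\mathsf{pp}}$ with $\widehat{\mu_{\mathsf{pp}}}=(\widehat{\mu})_{\mathsf{s}}$, and finish by taking differences for $\mu_{\mathsf{c}}$. The point you flag as the main obstacle is handled in the paper exactly as you propose, by citing \cite[Thm.~11.3]{ARMA} (or \cite{MoSt}) for $c_\chi(\widehat{\mu})=\mu(\{-\chi\})$ for a general Fourier transformable measure.
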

\begin{proof}

Since $\mu$ is Fourier transformable, $\widehat{\mu} \in \WAP(G)$ \cite{ARMA} is weakly admissible. Then, by Theorem~\ref{them:admis Eber decomp} $(\widehat{\mu})_{\mathsf{s}}$ is weakly admissible.

Moreover, we have \cite[Thm.~11.3]{ARMA} or \cite{MoSt}
\begin{displaymath}
c_\chi(\widehat{\mu})=\mu(\{ -x \})
\end{displaymath}
which shows that $\cF_{\mathsf{d}}(\widehat{\mu})=\left( \mu_{\mathsf{pp}} \right)^\dagger$, and hence $\cF_{\mathsf{d}}(\widehat{\mu})$ is a measure.

Therefore, by Theorem~\ref{thm: sap as FT}, the measure $\cF_{\mathsf{d}}(\widehat{\mu})^\dagger=  \mu_{\mathsf{pp}}$ is Fourier transformable and
\begin{displaymath}
\widehat{(\mu)_{\mathsf{pp}}}=(\widehat{\mu})_{\mathsf{s}} \,.
\end{displaymath}
By taking differences, it follows that $\mu_{\mathsf{c}}$ is also Fourier transformable and
\begin{displaymath}
\widehat{(\mu)_{\mathsf{c}}}=(\widehat{\mu})_{0} \,.
\end{displaymath}
\end{proof}

\section{On a special class of cut and project formal sums}

In this section we review a large class of strongly almost periodic measures, and discuss their Fourier transformability.

Consider a cut and project scheme $(G, H, \cL)$, for $h \in \Cz(H)$ we define the formal sum
\[
\omega_h: \sum_{(x, x^\star) \in \cL} h(x^\star) \delta_x \,.
\]

The following Lemma is trivial, see \cite{BM,NS11}.

\begin{lemma} If $h \in \Cc(H)$ then $\omega_h$ is strongly almost periodic.
\end{lemma}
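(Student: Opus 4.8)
The plan is to reduce the statement to the fact, recalled earlier in the excerpt, that a continuous compactly supported function $h$ on $H$ can be approximated (in a suitable sense) by functions of the form $g*\widetilde g$ with $g\in\Cc(H)$, together with the fact that for such functions the associated cut-and-project weighted Dirac combs are well understood. Concretely, first I would recall from the structure of cut-and-project schemes that $\omega_h$ is a translation bounded measure whenever $h\in\Cc(H)$: this is immediate since the set $\{(x,x^\star)\in\cL : x\in K\}$ is finite for each compact $K\subset G$ (the lattice $\cL$ is discrete and the window $\supp(h)$ is compact), so $\omega_h$ is a measure by Theorem~\ref{thm:formal sum is a measure}, and a uniform bound on $|\omega_h|(t+K)$ follows from translation boundedness of the model set $\oplam(\supp(h))$.

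Next I would prove strong almost periodicity. The cleanest route is via convolution: for $f\in\Cc(G)$ one checks that $f*\omega_h$ equals a function that extends continuously to the product group $G\times H$ restricted to $\cL$ — more precisely, $f*\omega_h$ lies in the image of $C_{\mathsf c}(G\times H)$ under the canonical map furnished by the cut and project scheme, hence it is a Bohr almost periodic function on $G$. To see this I would write $f*\omega_h(t)=\sum_{(x,x^\star)\in\cL} f(t-x)h(x^\star)$, recognize this as the evaluation at $t$ of the pullback of the function $(s,u)\mapsto (f\otimes h)$ suitably convolved, and invoke the standard fact (from the theory of model sets, e.g. as in \cite{BM,NS11}) that such ``pullback'' functions are strongly almost periodic because they descend from continuous functions on the compact quotient $(G\times H)/\cL$. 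Since this holds for every $f\in\Cc(G)$, the measure $\omega_h$ is strongly almost periodic by definition.

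The main obstacle — really the only nontrivial point — is justifying the passage from ``$f*\omega_h$ comes from a continuous function on a compact group'' to ``$f*\omega_h\in SAP(G)$''. This requires either citing the Weyl-type / model-set machinery already available in the literature (which the excerpt explicitly permits, since the Lemma is stated as ``trivial, see \cite{BM,NS11}''), or, if one wants to be self-contained, approximating $h$ uniformly on its compact support by finite linear combinations of characters of $H$ restricted to $\supp(h)$ and extending by cutoff; each character contributes a weighted Dirac comb $\sum_{(x,x^\star)} \chi(x^\star)\delta_x$ whose convolution with $f$ is a trigonometric polynomial on $G$ (using the duality $(x,x^\star)\mapsto$ characters of $G$ built into $\cL$), hence in $SAP(G)$, and $SAP(G)$ is closed under the uniform limits that the approximation produces.

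Therefore the proof reduces to: (1) observe $\omega_h$ is a well-defined translation bounded measure by discreteness of $\cL$ and compactness of $\supp(h)$; (2) for each $f\in\Cc(G)$, identify $f*\omega_h$ with the restriction to $G$ (via the embedding $G\hookrightarrow (G\times H)/\cL$) of a continuous function on a compact abelian group, or alternatively as a uniform limit of trigonometric polynomials; (3) conclude $f*\omega_h\in SAP(G)$ and hence $\omega_h\in SAP(G)$ by definition. I expect step (2) to be where all the content sits, but given the cited references and the closedness of $SAP(G)$ under uniform limits it is routine.
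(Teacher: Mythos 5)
Your main argument is correct, and it is in fact the only argument available here: the paper offers no proof of this Lemma at all, declaring it trivial and pointing to \cite{BM,NS11}, and the route you sketch is precisely the one those references use. Namely, $\omega_h$ is a translation bounded measure because $\oplam(\supp(h))$ is uniformly discrete (finiteness of $\cL\cap(K\times\supp(h))$ for compact $K$, with a uniform bound in translates), and for $f\in\Cc(G)$ the function $f*\omega_h(t)=\sum_{(x,x^\star)\in\cL}f(t-x)h(x^\star)$ is the pullback, along the continuous homomorphism $t\mapsto(t,0)+\cL$ with dense image in the compact quotient $(G\times H)/\cL$, of the continuous $\cL$-periodization of $f\otimes h$; pullbacks of continuous functions on a compact group along such homomorphisms are Bohr almost periodic, so $f*\omega_h\in SAP(G)$ for every $f$, which is the paper's definition of $\omega_h\in\SAP(G)$.

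One caution about your ``self-contained'' alternative: a single character $\chi$ of $H$ does not contribute a measure on $G$. The formal sum $\sum_{(x,x^\star)\in\cL}\chi(x^\star)\delta_x$ has unimodular weights on the (generically dense) projection of $\cL$ to $G$, so $\sum_{(x,x^\star)\in\cL}\chi(x^\star)f(t-x)$ is not absolutely convergent, and the claim that its convolution with $f$ is a trigonometric polynomial has no direct meaning; multiplying the character by a cutoff, as you suggest, destroys exactly the character structure you want to exploit. If you want an approximation argument, do it upstairs: approximate the periodized function uniformly by trigonometric polynomials on the compact group $(G\times H)/\cL$, whose restrictions to $G$ are genuine trigonometric polynomials, and use that $SAP(G)$ is closed under uniform limits. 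Your primary route, however, needs no such repair.
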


We next calculate the Fourier-Bohr series of this measure. Computations like this have been made in many places before \cite{LR,NS11,CR, CRS2}.

\begin{lemma}  If $h \in \Cc(H)$ then
\[
\cF_{\mathsf{d}}(\omega_h) = \dens(\cL)\omega_{\check{h}} \,.
\]
\end{lemma}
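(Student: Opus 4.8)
The statement to prove is that for $h \in \Cc(H)$, the Fourier--Bohr series of $\omega_h = \sum_{(x,x^\star) \in \cL} h(x^\star)\delta_x$ equals $\dens(\cL)\,\omega_{\check h}$, i.e.\ for every character $\chi \in \widehat{G}$ one has $c_\chi(\omega_h) = \dens(\cL)\,\widehat h(-\chi^\star)$ when $(\chi,\chi^\star) \in \cL^0$ lies in the dual lattice, and $c_\chi(\omega_h) = 0$ otherwise. The plan is to compute $c_\chi(\omega_h)$ directly from its definition as a mean, $c_\chi(\omega_h) = M\!\left(\overline{\chi}\,\omega_h\right) = \lim_n \frac{1}{\vol(A_n)} \sum_{(x,x^\star)\in \cL,\ x \in A_n} \overline{\chi(x)}\, h(x^\star)$ along a van Hove sequence $\{A_n\}$ in $G$. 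The natural tool is the \emph{Weyl equidistribution / uniform distribution theorem} for the projection of the lattice $\cL \subset G \times H$: since $\cL$ is a lattice and $h \in \Cc(H)$, the sum over lattice points in $A_n \times H$ of a function of the $H$-coordinate (twisted by a character of $G$) is governed by integration over the fundamental domain of $\cL$.

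\textbf{Key steps.} First I would rewrite the twisted sum: the function $(x,x^\star) \mapsto \overline{\chi(x)}\,h(x^\star)$ on $\cL$ can be viewed as the restriction to $\cL$ of a function on $G \times H$. By the standard abstract equidistribution result for cut and project schemes (the lattice $\cL$ projects densely to $H$ and the $G$-projection is one-to-one onto a dense subgroup; see e.g.\ the references \cite{LR,NS11,CR,CRS2} cited just before the lemma, or Weyl's theorem applied in $G\times H$), one has
\begin{displaymath}
\lim_n \frac{1}{\vol(A_n)} \sum_{\substack{(x,x^\star) \in \cL \\ x \in A_n}} \overline{\chi(x)}\, h(x^\star) \; = \; \dens(\cL) \int_{G \times H} \overline{\chi(x)}\, h(x^\star) \; d(x,x^\star) \,,
\end{displaymath}
interpreted appropriately: concretely, the character $\chi$ on $G$ either does or does not extend to a character of the torus $(G \times H)/\cL$. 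If $\chi$ does \emph{not} come from a point of the dual lattice $\cL^0$, the oscillation kills the mean and $c_\chi(\omega_h) = 0$. If $(\chi,\chi^\star) \in \cL^0$, then on $\cL$ we have $\chi(x) = \overline{\chi^\star(x^\star)}$, so the twisted sum becomes $\sum_{(x,x^\star)\in\cL,\ x\in A_n} \chi^\star(x^\star)\, h(x^\star)$, and equidistribution of $\{x^\star : x \in A_n\}$ in $H$ with respect to Haar measure (normalized so the density constant is $\dens(\cL)$) gives $c_\chi(\omega_h) = \dens(\cL) \int_H \chi^\star(x^\star)\, h(x^\star)\, dx^\star = \dens(\cL)\, \widehat h(-\chi^\star) = \dens(\cL)\, \check h(\chi^\star)$, with the sign/reflection convention matching the definition of $\omega_{\check h}$. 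Summing over all $\chi$ that arise this way — i.e.\ over the $G$-projection of $\cL^0$, which is exactly the index set of $\omega_{\check h}$ under the dual scheme identification — yields $\cF_{\mathsf d}(\omega_h) = \sum_\chi c_\chi(\omega_h)\delta_\chi = \dens(\cL)\,\omega_{\check h}$. I would also note that since $h \in \Cc(H)$ we already know $\omega_h \in \SAP(G)$ by the preceding lemma, so $\cF_{\mathsf d}(\omega_h)$ is well defined and the coefficients $c_\chi$ exist as honest means (independent of the van Hove sequence).

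\textbf{Main obstacle.} The routine part is the computation once equidistribution is available; the substantive point is justifying the equidistribution statement uniformly and with the correct normalizing density $\dens(\cL)$, and handling the reflection/sign conventions so that $\widehat h$ versus $\check h$ and $\chi^\star$ versus $-\chi^\star$ come out consistently with how $\omega_{\check h}$ and the dual cut and project scheme $(\widehat G,\widehat H,\cL^0)$ were set up. Since this is explicitly flagged as a computation ``made in many places before'' \cite{LR,NS11,CR,CRS2}, I expect the proof in the paper to be short: invoke the known mean-value formula for characters against $\omega_h$ and read off the answer, rather than reproving equidistribution from scratch.
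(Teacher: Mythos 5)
Your computation is correct and matches the paper's approach: the paper's proof simply cites the mean-value formula of \cite[Thm.~9.1]{LR}, which states exactly the coefficient identity $c_\chi(\omega_h)=\dens(\cL)\int_H \chi^\star(t)h(t)\,dt=\dens(\cL)\,\widecheck{h}(\chi^\star)$ that you derive (together with vanishing off the projection of $\cL^0$) via equidistribution. Your anticipation that the paper would just invoke this known formula rather than reprove equidistribution is exactly what happens, so your sketch is a fuller version of the same argument.
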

\begin{proof}
The computation is standard:

Let $\chi \in \widehat{G}$. Then, by \cite[Thm.~9.1]{LR} we have
\[
c_\chi(\omega_h)=\dens(\cL) \int_{H} \chi^\star(t) h(t) dt =\dens(\cL) \widecheck{f}(\chi^\star) \,.
\]
\end{proof}

Also, let us recall the following result:
\begin{theorem}\label{them: L1} \cite{CRS} If $\omega_h$ is a translation bounded measure then $h \in L^1(H)$.
\end{theorem}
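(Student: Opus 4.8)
The plan is to compare $\omega_h$ with the strongly almost periodic measures $\omega_g$, $g\in\Cc(H)$, whose mean is computable from the Fourier--Bohr series, and to use that a translation bounded positive measure has finite upper density along any van Hove sequence. Fix a van Hove sequence $\{A_n\}$ in $G$. Since $\omega_h$ is translation bounded, so is the positive measure $|\omega_h|=\sum_{(x,x^\star)\in\cL}|h(x^\star)|\,\delta_x$, and hence, by the standard covering argument (cover $A_n$ by boundedly overlapping translates of a fixed compact set with nonempty interior, using $\vol(A_n+U)/\vol(A_n)\to1$ for $U$ relatively compact), there is a finite constant $D$ with
\[
\limsup_n \frac{|\omega_h|(A_n)}{\vol(A_n)} \le D \,.
\]

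Next I would compute the density of $\omega_g$ for $g\in\Cc(H)$. By the first Lemma of this section $\omega_g\in\SAP(G)\subseteq\WAP(G)$, so it has a well defined mean $M(\omega_g)=\lim_n \omega_g(A_n)/\vol(A_n)$, and the constant Fourier--Bohr coefficient of a weakly almost periodic measure is exactly its mean. By the second Lemma this coefficient equals $\dens(\cL)\,\widecheck g(0)=\dens(\cL)\int_H g(t)\,dt$, so $M(\omega_g)=\dens(\cL)\int_H g(t)\,dt$. Here $\dens(\cL)\in(0,\infty)$, since $\cL$ is a cocompact lattice in $G\times H$.

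Now take an arbitrary $g\in\Cc(H)$ with $0\le g\le|h|$ pointwise. Because $\pi_G$ is injective on $\cL$, the measures $\omega_g=\sum_{(x,x^\star)\in\cL}g(x^\star)\delta_x$ and $|\omega_h|$ are pure point with $0\le g(x^\star)\le|h(x^\star)|$ at each atom, so $0\le\omega_g\le|\omega_h|$. Consequently
\[
\dens(\cL)\int_H g(t)\,dt = M(\omega_g) = \lim_n \frac{\omega_g(A_n)}{\vol(A_n)} \le \limsup_n \frac{|\omega_h|(A_n)}{\vol(A_n)} \le D \,,
\]
hence $\int_H g\,d\theta_H \le D/\dens(\cL)$ for every such $g$. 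Since $|h|\in\Cz(H)$ and Haar measure $\theta_H$ is Radon, $\int_H|h|\,d\theta_H=\sup\{\int_H g\,d\theta_H : g\in\Cc(H),\ 0\le g\le|h|\}$, so $\int_H|h|\,d\theta_H\le D/\dens(\cL)<\infty$; that is, $h\in L^1(H)$.

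The step I expect to be the main obstacle is the first one: checking that translation boundedness really forces a finite upper density along an arbitrary van Hove sequence. This is routine but is the only place where the geometry of van Hove sequences genuinely enters; it can alternatively be obtained by averaging the bounded function $|\omega_h|*f$ (with $f\in\Cc(G)$, $f\ge0$, $\int_G f=1$) over $A_n$ and controlling the van Hove boundary terms. Everything else is bookkeeping: the comparison $0\le\omega_g\le|\omega_h|$ uses only injectivity of $\pi_G|_{\cL}$, and the density of $\omega_g$ is precisely the datum of the second Lemma at the trivial character.
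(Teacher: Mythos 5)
Your argument is correct, but note that there is nothing in this paper to compare it against: Theorem~\ref{them: L1} is quoted from \cite{CRS} without proof here, so your write-up is a genuinely self-contained substitute. The structure is sound and uses only ingredients already available in this section: the first Lemma gives $\omega_g\in\SAP(G)\subset\WAP(G)$ for $g\in\Cc(H)$, so the mean $M(\omega_g)=\lim_n\omega_g(A_n)/\vol(A_n)$ exists along any van Hove sequence; the second Lemma, read at the trivial character (where $c_1(\mu)=M(\mu)$ by the definition $c_\chi(\mu)=M(\overline{\chi}\mu)$), gives $M(\omega_g)=\dens(\cL)\int_H g\,d\theta_H$ with $\dens(\cL)\in(0,\infty)$; the comparison $0\le\omega_g\le|\omega_h|$ for $0\le g\le|h|$ is legitimate because $\pi_G$ is injective on $\cL$, so $|\omega_h|=\sum_{(x,x^\star)\in\cL}|h(x^\star)|\delta_x$ is indeed the variation measure; and since $|h|$ is continuous and nonnegative, $\int_H|h|\,d\theta_H=\sup\{\int_H g\,d\theta_H:\ g\in\Cc(H),\ 0\le g\le|h|\}$, which closes the argument. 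This density-comparison route is essentially the same mechanism as in the cited source, so you are not losing generality by arguing this way.

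The one step you rightly flag, the bound $\limsup_n|\omega_h|(A_n)/\vol(A_n)<\infty$, is indeed routine: fix a compact $K$ with nonempty interior and an open relatively compact $V$ with $V-V\subset K$; a maximal subset $T\subset A_n$ with $\{t+V\}_{t\in T}$ pairwise disjoint satisfies $A_n\subset T+K$ and $\#T\,\vol(V)\le\vol(A_n+V)$, whence $|\omega_h|(A_n)\le\|\omega_h\|_K\,\vol(A_n+K)/\vol(V)$, and the van Hove property gives $\vol(A_n+K)/\vol(A_n)\to1$. (Your alternative, averaging the bounded function $|\omega_h|*f$ over $A_n$ and controlling the $\supp(f)$-boundary terms, works equally well.) The only background hypotheses you are implicitly using are those already in force throughout the paper: existence of van Hove sequences and of the positive finite density of the lattice $\cL$. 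With those granted, I see no gap.
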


We are now ready to prove the following result, compare \cite{CRS}:

\begin{theorem}\label{thm:omegah FT} \cite{CRS} Let $(G, H, \cL)$ be a cut and project scheme and let $h \in \Cc(H)$. Then, the following are equivalent:
\begin{itemize}
\item[(i)] $\omega_h$ is Fourier transformable.
\item[(ii)]$\omega_{\check{h}}$ is a weakly admissible formal sum.
\item[(iii)]$\omega_{\check{h}}$ is a translation bounded measure.
\item[(iv)] $\check{h} \in L^1(\widehat{H})$.
\end{itemize}
\end{theorem}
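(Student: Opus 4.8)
The plan is to prove the cycle of equivalences (i) $\Rightarrow$ (iii) $\Rightarrow$ (iv) $\Rightarrow$ (ii) $\Rightarrow$ (i), drawing on the machinery already assembled: the two previous lemmas (which give $\cF_{\mathsf{d}}(\omega_h)=\dens(\cL)\,\omega_{\check h}$, noting that after identifying $\widehat G$ with the internal space of the dual scheme $\omega_{\check h}$ is exactly, up to the density constant, the Fourier--Bohr series), Corollary~\ref{cor:sap ft admis}, Theorem~\ref{thm: main result}, Lemma~\ref{lem: admis formal sum impies measure}, Theorem~\ref{them: L1}, and the fact that $\omega_h \in \SAP(G)$.

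For (i) $\Rightarrow$ (iii): if $\omega_h$ is Fourier transformable then, since $\omega_h \in \SAP(G)$, Corollary~\ref{cor:sap ft admis} gives that $\cF_{\mathsf{d}}(\omega_h)=\dens(\cL)\,\omega_{\check h}$ is a weakly admissible formal sum, hence by Lemma~\ref{lem: admis formal sum impies measure} a translation bounded measure; scaling by $\dens(\cL)^{-1}>0$, so is $\omega_{\check h}$. For (iii) $\Rightarrow$ (iv): this is precisely Theorem~\ref{them: L1} applied in the dual cut and project scheme $(\widehat G,\widehat H,\cL^0)$, whose internal space is $\widehat H$ and where the object $\omega_{\check h}$ plays the role of ``$\omega_{(\cdot)}$''; since $\check h \in \Cz(\widehat H)$ (indeed $\check h \in \Cc(\widehat H)$ is false in general but $\check h \in \Cz(\widehat H)$ suffices for Theorem~\ref{them: L1}), translation boundedness of $\omega_{\check h}$ forces $\check h \in L^1(\widehat H)$. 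For (iv) $\Rightarrow$ (ii): if $\check h \in L^1(\widehat H)$, then applying the already-proven implication (iv)$\Rightarrow$(iii), equivalently (iv)$\Rightarrow$(ii), \emph{in the dual scheme} to the function $\check h$ — or, more directly, invoking Theorem~\ref{thm:omegah FT} itself one level down — would be circular, so instead I argue: $\check h \in L^1(\widehat H) \cap \Cu(\widehat H)$ implies $\check h \in L^2(\widehat H)$, and then for any $f \in \Cc(\widehat G)$ one computes that $\sum_{(\chi,\chi^\star)\in \cL^0} |\widecheck f(\chi)|^2\,|\check h(\chi^\star)|$ is controlled, via the Poisson-type estimate used in the previous lemma's computation, by $\|f\|$ against a translation-bounded weight; the cleanest route is to observe $\check h \in L^1$ means $\omega_{\check h}$ is, by the density formula run backwards, $\dens(\cL^0)^{-1}\cF_{\mathsf d}(\omega_{\check h}^{\,\vee}\text{-type object})$ is weakly admissible — but the most economical is simply: $\check h \in L^1(\widehat H)$ with $\check h$ bounded gives that $\omega_{\check h}$ is translation bounded by a direct summation estimate over a relatively compact window, and translation bounded formal sums with Meyer-set (indeed model-set) support are weakly admissible formal sums by the theorem proved at the end of Section on weakly admissible formal sums. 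Finally (ii) $\Rightarrow$ (i) is immediate from Theorem~\ref{thm: main result}: $\omega_h \in \SAP(G)$ and its Fourier--Bohr series $\dens(\cL)\,\omega_{\check h}$ is a weakly admissible formal sum, so $\omega_h$ is Fourier transformable (with $\widehat{\omega_h}=\dens(\cL)\,\omega_{\check h}$).

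The \textbf{main obstacle} is the step (iii) $\Rightarrow$ (iv), i.e. the appeal to Theorem~\ref{them: L1}: one must set up the dual cut and project scheme correctly so that $\omega_{\check h}$ genuinely has the form required by that theorem (a weighted Dirac comb indexed by a lattice projection with internal-space weight $\check h \in \Cz$), and one must know $\check h \in \Cz(\widehat H)$, which follows because $h \in \Cc(H) \subset L^1(H)$ so $\check h$ is continuous and vanishes at infinity by Riemann--Lebesgue. A secondary subtlety is ensuring (iv) $\Rightarrow$ (ii) does not secretly reuse the theorem being proved; the safe path is the Meyer-set argument, since $\{x : (x,x^\star)\in\cL\}$ is always a model set hence a subset of a Meyer set, and $\{\check h(x^\star)\}$ is bounded once $\check h \in L^1 \cap \Cu$, so the theorem on translation bounded measures with Meyer set support gives weak admissibility directly. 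Everything else is bookkeeping with the density constant $\dens(\cL)>0$, which never affects translation boundedness, weak admissibility, or the measure property.
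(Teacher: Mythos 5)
Your cycle (i) $\Rightarrow$ (iii) $\Rightarrow$ (iv) $\Rightarrow$ (ii) $\Rightarrow$ (i) agrees with the paper on three of the four arrows: (i)$\Rightarrow$(ii)$\Rightarrow$(iii) via Corollary~\ref{cor:sap ft admis}, Lemma~\ref{lem: admis formal sum impies measure} and Theorem~\ref{thm: admis implies strong admis}, (iii)$\Rightarrow$(iv) via Theorem~\ref{them: L1} in the dual scheme, and (ii)$\Rightarrow$(i) via Theorem~\ref{thm: main result}; these steps are fine. The genuine gap is in (iv) $\Rightarrow$ (ii), and neither of the two routes you offer works. First, the ``direct summation estimate'': knowing only $\check h \in L^1(\widehat{H}) \cap \Cz(\widehat{H})$ does not control $\sum_{\chi \in \chi_0+K} |\check h(\chi^\star)|$, because the points $\chi^\star$ coming from the strip $(\chi_0+K)\times \widehat{H}$ are dense in $\widehat{H}$ and an $L^1\cap \Cz$ function can have thin spikes located exactly at (infinitely many of) these points; what a summation argument needs is a Wiener-amalgam-type condition $\sum_j \sup_{t_j+W}|\check h| < \infty$, which is strictly stronger than integrability. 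Indeed, for general $g \in \Cz \cap L^1$ the comb $\omega_g$ need not even be a measure, so boundedness plus $L^1$ cannot by itself yield translation boundedness. Second, the Meyer-set theorem does not apply: your claim that $\{\chi : (\chi,\chi^\star)\in\cL^0\}$ is a model set is false --- it is the \emph{full} projection of the lattice, a countable (typically dense) subgroup of $\widehat{G}$, not $\oplam(W)$ for any relatively compact $W$; since $\check h$ is not compactly supported, $\supp(\omega_{\check h})$ is not contained in any Meyer set, so the theorem on bounded weights with Meyer set support is unavailable (it does apply to $\omega_h$, whose window $\supp(h)$ is compact, but that is the wrong side).

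The paper closes this arrow by a Poisson-summation argument, which is the missing idea: for $g \in \Kt$, the product function $g \odot h$ lies in $\Cc(G\times H)$ and, using (iv), $\widehat{g\odot h}=\widehat{g}\odot\widehat{h} \in L^1$; since the lattice comb $\delta_{\cL}$ is Fourier transformable with transform proportional to $\delta_{\cL^0}$ \cite{ARMA1,ReiterSte}, one concludes $\widecheck{g\odot h} \in L^1(\delta_{\cL^0})$, which is precisely the statement $\left|\omega_{\check h}\right|\bigl(\left|\widecheck{g}\right|\bigr) < \infty$, i.e.\ weak admissibility of the formal sum $\omega_{\check h}$. This uses the Paley--Wiener structure of $\check h$ (it is the transform of a compactly supported function) together with transformability of $\delta_{\cL}$, exactly the input your blind attempt avoided; without it, (iv) $\Rightarrow$ (ii) remains unproven and the cycle does not close.
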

\begin{proof}

The equivalence $(i) \Leftrightarrow (ii)$ follows from Theorem~\ref{thm: main result}.

$(ii) \Rightarrow (iii)$ follows from Theorem~\ref{thm: admis implies strong admis}, while $(iii) \Rightarrow (iv)$ follows from Theorem~\ref{them: L1}.

We give here a second alternate proof, based on weak admissibility.

$(i) \Rightarrow (ii)$. Since $h \in \Cc(G)$, the measure $\omega_h$ is strongly almost periodic, and hence $\widehat{\omega_h}$ is pure point \cite{MoSt}.

$(iv) \Rightarrow (ii)$

Let $g \in K_2(G)$. Then $g \odot h \in \Cc(G \times H)$ and $\widehat{g \odot h} \in L^1(G \times H)$, and hence \cite{ARMA1,ReiterSte}, $\widecheck{g \odot h} \in L^1(\delta_{\cL^0})$. This is equivalent to
\[
\left| \omega_{\check{h}} \right| (\left| \widecheck{g} \right|) < \infty \,,
\]
which gives the $\widehat{K_2(G)}$-boundedness.

\end{proof}

\begin{remark} If $h \in \Cc(H)$ and $\check{h} \notin L^1(\widehat{H})$, the it follows that $\omega_h \in \SAP(G)$ but $\omega_h$ is not Fourier transformable as a measure.

This provides many examples of non Fourier transformable strongly almost periodic measures. In particular, for all these measures, the Fourier-Bohr series is not weakly admissible.
\end{remark}

We complete the section by recalling a result of \cite{NS11}. This result, together with Theorem~\ref{thm:omegah FT} provides a characterisation for Fourier transformability for strongly almost periodic measures supported inside Meyer sets.

\begin{theorem}\label{thm: NS11} Let $\omega$ be a translation bounded measure with Meyer set support. Then $\omega$ is strongly almost periodic if and only if there exists a cut and project scheme $(G, H, \cL)$ and a function $h \in \Cc(H)$ such that
\[
\omega= \omega_h \,.
\]
\end{theorem}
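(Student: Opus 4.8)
The direction $(\Leftarrow)$ is immediate: if $\omega=\omega_h$ with $h\in\Cc(H)$, then $\omega\in\SAP(G)$ by the Lemma above, and $\supp\omega\subseteq\oplam(\supp h)$ is a subset of a Meyer set, so (together with the standing assumption that $\omega$ is translation bounded) all hypotheses hold. The content is therefore the forward implication, which I would establish in three stages.

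\textbf{Stage 1: a cut and project scheme adapted to $\omega$.} Since $\supp\omega$ is a subset of a Meyer set, the difference set $\supp\omega-\supp\omega$ is uniformly discrete, and by the structure theory of Meyer sets \cite{Meyer,Moody} one can find a cut and project scheme $(G,H,\cL)$ and a compact set $W\subseteq H$, which we may take to equal the closure of its interior, such that $\supp\omega\subseteq\oplam(W)$, the projection $\pi_G\colon\cL\to G$ is injective — so that the $\star$-map $x\mapsto x^\star$ is defined on $\pi_G(\cL)\supseteq\supp\omega$ — and $\cL^\star:=\pi_H(\cL)$ is dense in $H$. I would in fact choose the scheme so that it simultaneously captures the Fourier--Bohr spectrum of $\omega$, i.e. so that the latter is contained in $\pi_{\widehat{G}}(\cL^0)$, using the known fact that a translation bounded measure supported inside a Meyer set has its Bragg positions inside a Meyer set in $\widehat{G}$. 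Finally, since $\supp\omega$ is uniformly discrete and $\omega$ is translation bounded, the weights $w(x):=\omega(\{x\})$, extended by $w(x)=0$ for $x\in\pi_G(\cL)\setminus\supp\omega$, form a bounded function on $\pi_G(\cL)$.

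\textbf{Stage 2: uniform continuity of the weights in the internal coordinate.} This is the crux. Fix a nonnegative $g\in\Cc(G)$ with $g(0)=1$ and with $\supp g$ small enough that $(\supp\omega-\supp\omega)\cap\supp g=\{0\}$; then $(\omega*g)(y)=w(y)$ for every $y\in\supp\omega$. Since $\omega\in\SAP(G)$, the function $\omega*g$ lies in $SAP(G)$ and hence has an equicontinuous hull; combined with the spectral information of Stage~1 (the Fourier--Bohr spectrum of $\omega*g$ also sits inside $\pi_{\widehat{G}}(\cL^0)$), the function $\omega*g$ factors continuously through the compact group $(G\times H)/\cL$. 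Consequently, for every $\epsilon>0$ there is a neighbourhood $V$ of $0$ in $H$ such that every $t\in\pi_G(\cL)$ with $t^\star\in V$ is an $\epsilon$-almost period of $\omega*g$. For $x,y\in\pi_G(\cL)$ with $x^\star-y^\star\in V$ this gives $|w(x)-w(y)|\le\|T_{x-y}(\omega*g)-\omega*g\|_\infty<\epsilon$, so $w$ is uniformly continuous in the internal topology; moreover $w(x)\to 0$ as $x^\star$ moves away from $W$.

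\textbf{Stage 3, and the main obstacle.} A bounded uniformly continuous function on the dense subset $\cL^\star\subseteq H$ extends uniquely to a bounded uniformly continuous function $h$ on $H$; by the decay from Stage~2 and the choice of $W$ as the closure of its interior, $\supp h\subseteq W$, so $h\in\Cc(H)$, and since $\pi_G$ is injective, $\omega_h=\sum_{(x,y)\in\cL}h(y)\,\delta_x=\sum_{x\in\pi_G(\cL)}h(x^\star)\,\delta_x=\sum_{x\in\pi_G(\cL)}w(x)\,\delta_x=\omega$, as required. The delicate point is Stage~2: deducing from the relative denseness of the $\epsilon$-almost periods of $\omega$ in $G$ that \emph{all} lattice translates with small internal part are $\epsilon$-almost periods, equivalently that the Fourier--Bohr spectrum of a Meyer-set-supported translation bounded measure lies in the dual lattice $\cL^0$. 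This needs genuine harmonic analysis of Meyer sets (relative denseness and the Meyer property of Bragg spectra), not merely soft almost-periodicity arguments.
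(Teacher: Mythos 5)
First, note that the paper does not prove this statement at all: Theorem~\ref{thm: NS11} is simply recalled from \cite{NS11}, so there is no internal proof to compare your argument with; your proposal has to stand on its own, and as written it has two genuine gaps.

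The first gap is in Stage~1. The assertion that the cut and project scheme can be chosen so that, in addition to $\supp \omega \subseteq \oplam(W)$, the Fourier--Bohr spectrum of $\omega$ lies in $\pi_{\widehat{G}}(\cL^0)$ (equivalently, that every $t \in \pi_G(\cL)$ with $t^\star$ small is an $\epsilon$-almost period, which is what your factorization through $(G\times H)/\cL$ really uses) is essentially the whole content of the theorem: once $\omega = \omega_h$ is known, the spectrum sits in $\pi_{\widehat{G}}(\cL^0)$ for \emph{that} scheme, but for a scheme chosen only to cover a Meyer set containing $\supp\omega$ this containment is not automatic, and you offer no argument for it beyond calling it a ``known fact'' --- indeed you concede at the end that this is the delicate point. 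In the literature this is handled non-circularly by building the internal group $H$ out of the ($\|\cdot\|_K$-norm) almost periods of $\omega$ itself, rather than by first fixing an arbitrary scheme for the support and then trying to match the spectrum to it.

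The second gap is the displayed inequality in Stage~2. You have $(\omega*g)(x) = w(x)$ only for $x \in \supp\omega$; for $y \in \pi_G(\cL)\setminus\supp\omega$ one has $w(y)=0$ while $(\omega*g)(y)$ need not vanish, because $\pi_G(\cL)$ is in general dense and $y$ may lie within $\supp g$ of some \emph{other} point of $\supp\omega$. So the estimate $|w(x)-w(y)| \le \|T_{x-y}(\omega*g)-\omega*g\|_\infty$ is only valid when both points lie in $\supp\omega$; what it actually shows is that $x \mapsto (\omega*g)(x)$ is internally uniformly continuous on $\pi_G(\cL)$, and the continuous extension of \emph{that} function is not compactly supported and does not reproduce $\omega$ (it creates extra atoms at lattice points near, but not in, $\supp\omega$). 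But the uniform continuity of the zero-extended weight $w$ is exactly what you need in Stage~3 to get $\supp h \subseteq W$ and $\omega_h = \omega$. The cancellation problem can be repaired, but only by using the Meyer property more seriously: for instance, restricting the translations $t$ to a Meyer set guarantees that $\supp\omega \cup (t+\supp\omega)$ is uniformly discrete with a separation radius independent of $t$, so that no atom of $T_t\omega$ can sit $\epsilon$-close to, without coinciding with, an atom of $\omega$, and then $\|T_t(\omega*g)-\omega*g\|_\infty$ really does control the individual differences of point masses; alternatively one works directly with norm almost periods $\|T_t\omega - \omega\|_K$. Without one of these repairs, and without a proof (or precise citation) of the spectral input in Stage~1, the argument does not yet establish the theorem.
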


As a consequence we get

\begin{theorem} Let $\omega$ be a strongly almost periodic measure with Meyer set support. Then, the following are equivalent:
\begin{itemize}
  \item [(i)] $\omega$ is Fourier transformable.
  \item [(ii)] There exists a cut and project scheme $(G, H, \cL)$ and a function $h \in \Cc(H)$  with $\widehat{h} \in L^1(\widehat{H})$ such that
      \[
      \omega=\omega_h \,.
      \]
  \item [(iii)] For each cut and project scheme $(G, H, \cL)$ and function $h \in \Cc(H)$  such that
      \[
      \omega=\omega_h \,,
      \]
      we have $\widehat{h} \in L^1(\widehat{H})$.
\end{itemize}
\end{theorem}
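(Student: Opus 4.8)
The plan is to deduce this statement directly from the two results just recalled, namely Theorem~\ref{thm: NS11} and Theorem~\ref{thm:omegah FT}, by proving the implications cyclically as $(i) \Rightarrow (iii) \Rightarrow (ii) \Rightarrow (i)$. No new analysis should be needed; the content is entirely in organizing the quantifiers correctly.

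First I would establish $(i) \Rightarrow (iii)$. Assume $\omega$ is Fourier transformable, and let $(G, H, \cL)$ be an arbitrary cut and project scheme and $h \in \Cc(H)$ an arbitrary function with $\omega = \omega_h$. Then $\omega_h$ is Fourier transformable, so the equivalence $(i) \Leftrightarrow (iv)$ of Theorem~\ref{thm:omegah FT} gives $\check{h} \in L^1(\widehat{H})$, hence $\widehat{h} \in L^1(\widehat{H})$ (the two membership statements are equivalent since $|\widehat{h}|$ and $|\check{h}|$ differ only by the reflection $\chi \mapsto -\chi$ on $\widehat{H}$, which preserves the Haar integral). As the scheme and the function were arbitrary subject only to $\omega = \omega_h$, this is exactly $(iii)$.

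Next I would do $(iii) \Rightarrow (ii)$. The single point requiring care here is that $(iii)$ not be vacuously satisfied: since $\omega$ is a translation bounded strongly almost periodic measure with Meyer set support, Theorem~\ref{thm: NS11} produces at least one cut and project scheme $(G, H, \cL)$ together with a function $h \in \Cc(H)$ with $\omega = \omega_h$. Applying the hypothesis $(iii)$ to this particular representation yields $\widehat{h} \in L^1(\widehat{H})$, which is precisely the content of $(ii)$. Finally, $(ii) \Rightarrow (i)$ is immediate: given a representation $\omega = \omega_h$ with $h \in \Cc(H)$ and $\widehat{h} \in L^1(\widehat{H})$, equivalently $\check{h} \in L^1(\widehat{H})$, the implication $(iv) \Rightarrow (i)$ of Theorem~\ref{thm:omegah FT} shows $\omega = \omega_h$ is Fourier transformable.

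I do not expect a real obstacle. The only thing one must be attentive to is the logical role of Theorem~\ref{thm: NS11}: it is what guarantees the existence of a cut and project representation of $\omega$, which both makes the universally quantified statement $(iii)$ non-trivial and allows the passage from $(iii)$ to the existentially quantified $(ii)$. Beyond that, the argument is a direct invocation of the cited theorems.
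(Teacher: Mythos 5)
Your proof is correct and follows essentially the same route as the paper: both arguments rest entirely on Theorem~\ref{thm: NS11} (to produce a cut and project representation of $\omega$, which makes (iii) non-vacuous and yields (ii)) and on Theorem~\ref{thm:omegah FT} (for the equivalence between Fourier transformability of $\omega_h$ and the $L^1$ condition on $\widehat{h}$, where your remark that $\widehat{h}\in L^1(\widehat{H})$ and $\check{h}\in L^1(\widehat{H})$ are interchangeable is a harmless clarification the paper leaves implicit). The only difference is cosmetic: you arrange the implications as a single cycle $(i)\Rightarrow(iii)\Rightarrow(ii)\Rightarrow(i)$, whereas the paper proves $(i)\Leftrightarrow(ii)$ and $(i)\Leftrightarrow(iii)$ separately.
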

\begin{proof}

$(i) \Rightarrow (ii):$ Theorem~\ref{thm: NS11} gives the existence of the cut and project scheme. Now, since $\omega$ is Fourier transformable, by Theorem~\ref{thm:omegah FT} we get $\widehat{h} \in L^1(\widehat{H})$.

$(ii) \Rightarrow (i):$ Follows from Theorem~\ref{thm:omegah FT}.

$(i) \Rightarrow (iii):$ Follows from Theorem~\ref{thm:omegah FT}.

$(iii) \Rightarrow (i):$ Theorem~\ref{thm: NS11} gives that there exists a cut and project scheme and some $h \in \Cc(H)$ such that $\omega=\omega_h$. Now, by (iii), we have $\widehat{h} \in L^1(\widehat{H})$. (i) follows now from Theorem~\ref{thm:omegah FT}.

\end{proof}

\subsection*{Acknowledgments} This paper was inspired by some discussions with Christoph Richard and Michael Baake, and we are grateful for the insight they provided. We are also grateful to Jeet Trivedi for carefully reading the manuscript and suggesting some improvements. We would also like to thank NSERC for their support with grant 03762-2014. This work was presented at the conference \textit{Workshop on Aperiodic Order: Enumeration Problems, Dynamics, and Toppology} at University of Bielefeld, which was supported by the German Research Foundation (DFG), within the CRC 701, and the author would like to thank the organizers.

\end{document}